
\documentclass[oneside,reqno,12pt]{amsart}%
\usepackage[left=1in,top=1in,right=1in,bottom=1in]{geometry}
\usepackage{amscd}
\usepackage{amsfonts}
\usepackage{graphicx}
\usepackage{amsmath}
\usepackage{amssymb}
\usepackage{amsthm}
\usepackage{latexsym}

\setcounter{MaxMatrixCols}{30}
\providecommand{\U}[1]{\protect\rule{.1in}{.1in}}
\newtheorem{theorem}{Theorem}[section]

\newtheorem{corollary}[theorem]{Corollary}

\newtheorem{definition}[theorem]{Definition}
\newtheorem{example}[theorem]{Example}

\newtheorem{lemma}[theorem]{Lemma}

\newtheorem{proposition}[theorem]{Proposition}
\newtheorem{remark}[theorem]{Remark}

\newtheorem{question}[theorem]{Question}




\setlength{\marginparwidth}{0.8in}
\newcommand{\note}[2][\null]{%
  \marginpar{\renewcommand{\baselinestretch}{1}\vspace{-1em}\hrule\vspace{3pt}%
  \tiny\raggedright{#2\ifx#1\null\else\\\hfill---
  {\em #1}\fi}\vspace{1.5em}}%
}



\begin{document}
\title[Plancherel Measure and Admissibility]{Regular Representations of Time-Frequency Groups}
\author[A. Mayeli]{Azita Mayeli}
\address{Mathematics Department, Queensborough C. College,  City University of New
York, Bayside, NY 11362 U.S.A}
\email{amayeli@qcc.cuny.edu}
\author[V. Oussa]{Vignon Oussa}
\address{Dept.\ of Mathematics \& Computer Science\\
Bridgewater State University\\
Bridgewater, MA 02325 U.S.A.\\
 }
\curraddr{131 Summer Street, Bridgewater, Massachusetts 02325}
\email{vignon.oussa@bridgew.edu}
\thanks{}
\subjclass[2000]{22E40}

\begin{abstract}
In this paper, we study the Plancherel measure  of a class of non-connected
nilpotent groups which is of special interest in Gabor theory. Let $G$ be a
time-frequency group. More precisely, that is $G=\left\langle
T_{k},M_{l}:k\in%
\mathbb{Z}
^{d},l\in B%
\mathbb{Z}
^{d}\right\rangle ,$ $T_{k}$, $M_{l}$ are translations and modulations
operators acting in $L^{2}(\mathbb{R}^{d}),$ and $B$ is a non-singular matrix.
We compute the Plancherel measure of the left regular representation of
$G\ $which is denoted by $L.$ The action of $G$ on $L^{2}(\mathbb{R}^{d})$
induces a representation which we call a Gabor representation. Motivated by
the admissibility of this representation, we compute the decomposition of $L$ into direct integral of irreducible representations by providing a precise description of the unitary dual and its Plancherel
measure. As a result, we generalize Hartmut F\"uhr's results which are only
obtained for the restricted case where $d=1$, $B=1/L,L\in%
\mathbb{Z}
$ and $L>1.$ Even in the case where $G$ is not type I, we are able to obtain a
decomposition of the left regular representation of $G$ into a direct integral
decomposition of irreducible representations when $d=1$. Some interesting
applications to Gabor theory are given as well. For example, when $B$ is an
integral matrix, we are able to obtain a direct integral decomposition of the
Gabor representation of $G.$

\end{abstract}
\maketitle

\date{\today}







\section{Introduction}

Let $G$ be a locally compact group with type $I$ left regular representation.
The Plancherel theorem guarantees the existence of a measure $\mu$ on the
unitary dual of $G\ $such that once a Haar measure is fixed on the group $G$,
$\mu$ is uniquely determined. Although the existence of the Plancherel measure
is given; it is generally a hard but interesting problem to compute it. Let
$\widehat{G}$ be the unitary dual of the group $G.$ The group Fourier
transform $f\mapsto\widehat{f}$ maps $L^{1}\left(  G\right)  \cap L^{2}\left(
G\right)  $ into $\int_{\widehat{G}}^{\oplus}\mathcal{H}_{\pi}\otimes
\mathcal{H}_{\pi}d\mu\left(  \pi\right)  .$ Also, the group Fourier transform
extends to a unitary map defined on $L^{2}\left(  G\right)  $. This extension
is known as the Plancherel transform $\mathcal{P}$. For arbitrary functions
$f$ and $g\in L^{2}\left(  G\right)  ,$ the following holds true:
\[
\left\langle f,g\right\rangle _{L^{2}\left(  G\right)  }=\int_{\widehat{G}%
}\mathrm{trace}\left[  \widehat{f}\left(  \pi\right)  \widehat{g}\left(
\pi\right)  ^{\ast}\right]  d\mu\left(  \pi\right)  .
\]
Moreover, if $L$ is the left regular representation of $G$ acting in
$L^{2}\left(G\right)  $ then we obtain a \textbf{unique} direct integral
decomposition of $L$ into unitary irreducible representations of $G$ such that%
\[
\mathcal{P\circ}\text{ }L\circ\mathcal{P}^{-1}=\int_{\widehat{G}}^{\oplus}%
\pi\otimes1_{\mathcal{H}_{\pi}}\text{ }d\mu\left(  \pi\right)  .
\]

Let $B$ be an invertible matrix of order $d.$ For $k$ $\in%
\mathbb{Z}
^{d}$, and $l\in B%
\mathbb{Z}
^{d},$ we define the following unitary operators $T_{k},M_{l}:L^{2}\left(
\mathbb{R}
^{d}\right)  \rightarrow L^{2}\left(
\mathbb{R}
^{d}\right)  $ such that $T_{k}f\left(  x\right)  =f\left(  x-k\right)  $ and
$M_{l}f\left(  x\right)  =e^{2\pi i\left\langle l,x\right\rangle }f\left(
x\right)  .$ Let $G$ be the group generated by $T_{k},$ and $M_{l}$. $T_{k}$
is a time-shift operator, and $M_{l}$ is a modulation operator (or frequency
shift operator). We write
\begin{equation}
G=\left\langle T_{k},M_{l}\text{ }|\text{ }k\in%
\mathbb{Z}
^{d},l\in B%
\mathbb{Z}
^{d}\right\rangle . \label{G}%
\end{equation}

Characteristics of the closure of the linear span of orbits of the type $G(S)$ where $S\subset L^2(\mathbb{R}^d)$ have been studied in \cite{Bownik} when $B$ only has rational entries. Also a thorough presentation
of the theory of time-frequency analysis is given in \cite{Grog}. In
this paper, we are mainly interested in the following questions.

\begin{question}
If $G$ is type I, can we provide a description of the unitary dual of $G,$ and
a precise formula for the Plancherel measure of $G?$
\end{question}

\begin{question}
If $G$ is not type I, can we obtain a decomposition of the left regular
representation into unitary irreducible representations of $G$? Is it possible to provide a central decomposition of its left regular representation?
\end{question}

For obvious reasons, we term the group
$G$ a \textbf{time-frequency group}. There are three cases to consider.
First, it is easy to see that $G$ is a commutative discrete group if and
only if $B$ is an integral matrix. In that case, all irreducible
representations of $G$ are characters, and thanks to the Pontrjagin duality,
the Plancherel measure is well-understood. In fact if $B$ is a matrix with
integral entries, the Plancherel measure of $G$ is supported on a measurable
fundamental domain of the lattice $%
\mathbb{Z}
^{d}\times\left(  B^{-1}\right)  ^{tr}%
\mathbb{Z}
^{d};$ namely the set $
\left[  0,1\right)  ^{d}\times\left(  B^{-1}\right)  ^{tr}\left[  0,1\right)
^{d}.$ Interestingly, it can be shown that the Gabor representation $\mathbb{Z}^{d}\times B\mathbb{Z}^{d}\ni\left(  k,l\right)  \mapsto T_{k}\circ M_{l}$ is unitarily equivalent with a subrepresentation of the left regular
representation if and only if $\left\vert \det B\right\vert \leq1.$ Otherwise,
the Gabor representation is equivalent to a direct sum of regular
representations. Although the previous statement is not technically new, the
proof given here is based on the representation theory of the time-frequency group. Secondly, if $B$ has some rational entries, some of them non-integer, then
$G$ is a non-commutative discrete type I group. Using well-known techniques
developed by Mackey, and later on by Kleppner and Lipsman in \cite{Lipsman1},
precise descriptions of the unitary dual of $G$ and its Plancherel measure are
obtained. Two main ingredients are required to compute the Plancherel measure
of $G$. Namely, a closed normal subgroup of $N$ whose left regular
representation is type I, and a family of subgroups of $G/N$ known as the
`little groups'. We will show that the Plancherel measure in the case where
$B$ has some non integral rational entries but no irrational entry, is a fiber
measure supported on a fiber space with base space: the unitary dual of the
commutator subgroup of $G$. That is, the base space is $\widehat{\left[
G,G\right]  }.$ Using some procedure given in \cite{Unified}, we can construct
a non-singular matrix $A,$ such that $A%
\mathbb{Z}
^{d}=%
\mathbb{Z}
^{d}\cap\left(  B^{-1}\right)  ^{tr}%
\mathbb{Z}
^{d}$ and we show that each fiber can be identified with some compact set
\[
\Lambda_{1}\times\mathbf{E}_{\sigma}\times\left\{  \chi_{\sigma}\right\}
\subset%
\mathbb{R}
^{d}\times%
\mathbb{R}
^{d}\times\left\{  \chi_{\sigma}\right\}
\]
where $\chi_{\sigma}\in\widehat{\left[  G,G\right]  },$ $\Lambda_{1}$ is a
measurable fundamental domain of $\left(  A^{-1}\right)  ^{tr}%
\mathbb{Z}
^{d}$ and $\mathbf{E}_{\sigma}$ is the cross-section of the action of a little
group (which is a finite group here) in $%
\mathbb{R}
^{d}/\left(  B^{-1}\right)  ^{tr}%
\mathbb{Z}
^{d}.$ We also show that all irreducible representations of $G$ are monomial
representations modelled as acting in some finite-dimensional Hilbert spaces
with dimensions bounded above by $\left\vert \det A\right\vert $. It is worth
noticing that Hartmut F\"{u}hr has already computed the Plancherel measure of
the simplest example (Section $5.5$ of \cite{hartmut}) of the class of
groups considered in our paper. In his example, $d=1$ and $B%
\mathbb{Z}
^{d}=1/L%
\mathbb{Z}
$ where $L$ is some positive integer greater than one. For the more general
case in which we are interested, we obtain a parametrization of the unitary
dual of $G,$ and we derive a precise formula for the Plancherel measure. In
the case where $G$ is not type I ($B$ has some irrational entries);
unfortunately the `Mackey machine' fails. In the particular case
where $d=1,$%
\[
G=\left\langle T_{k},M_{l}\text{ }|\text{ }k\in%
\mathbb{Z}
,l\in\alpha%
\mathbb{Z}
\right\rangle ,\text{ }\alpha\in%
\mathbb{R}
-%
\mathbb{Q}
,
\]
we are able to obtain a central decomposition of the left regular representation of $L$ as well as a direct integral decomposition of the left
regular representation of
\[
G\cong\Gamma=\left\{  \left[
\begin{array}
[c]{ccc}%
1 & m & l\\
0 & 1 & k\\
0 & 0 & 1
\end{array}
\right]  :\left(  m,k,l\right)  \in%
\mathbb{Z}
^{3}\right\}  
\]
into its irreducible components. In fact, we derive that the left regular representation of $G$ is unitarily
equivalent to
\begin{equation}
\int_{\left[  -1,1\right]  }^{\oplus}\int_{\left[  0,\left\vert \lambda
\right\vert \right)  }^{\oplus}\mathrm{Ind}_{K}^{\Gamma}\left(  \chi_{\left(
\left\vert \lambda\right\vert ,t\right)  }\right)  \left\vert \lambda
\right\vert dtd\lambda\label{irrational}%
\end{equation}
acting in $\int_{\left[  -1,1\right]  }^{\oplus}\int_{\left[  0,\left\vert
\lambda\right\vert \right)  }^{\oplus}l^{2}\left(
\mathbb{Z}
\right)  \left\vert \lambda\right\vert dtd\lambda$ where
\[
K=\left\{  \left[
\begin{array}
[c]{ccc}%
1 & 0 & l\\
0 & 1 & k\\
0 & 0 & 1
\end{array}
\right]  :\left(  k,l\right)  \in%
\mathbb{Z}
^{2}\right\}
\]
and
\[
\chi_{\left(  \left\vert \lambda\right\vert ,t\right)  }\left(  \left[
\begin{array}
[c]{ccc}%
1 & 0 & l\\
0 & 1 & k\\
0 & 0 & 1
\end{array}
\right]  \right)  =\exp\left(  2\pi i\left(  \left\vert \lambda\right\vert
l+tk\right)  \right)  .
\]
To the best of our knowledge, the decomposition given in (\ref{irrational}) and the central decomposition obtained in Theorem \ref{central} are both
appearing for the first time in the literature.

\section*{Acknowledgements} 
 Sincerest thanks go to B. Currey, for providing support and suggestions during the preparation of this work. We also thank the anonymous referees. Their suggestions and corrections were crucial to the improvement of our work.

\section{Preliminaries}
Let us start by setting up some notation. Given a matrix $A$ of order $d,$
$A^{tr}$ stands for the transpose of $A,$ and $A^{-tr}=\left(  A^{-1}\right)
^{tr}$ stands for the inverse transpose of $A.$ Let $F$ be a field or a ring.
It is standard to use $GL\left(  d,F\right)  $ to denote the set of
invertible matrices of order $d$ with entries in $F.$ Let $G$ be a locally
compact group. The unitary dual, which is the set of all irreducible unitary
representations of $G$ is denoted by $\widehat{G}.$ Given $x\in%
\mathbb{R}
^{d},$ we define a character which is a one-dimensional unitary representation
of $%
\mathbb{R}
^{d}$ into the one-dimensional torus as $\chi_{x}:%
\mathbb{R}
^{d}\rightarrow\mathbb{T}$ where $\chi_{x}\left(  y\right)  =e^{2\pi
i\left\langle x,y\right\rangle }.$ Let $H$ be a subgroup of $G.$ The index of
$H$ in $G$ is denoted by $\left(  G:H\right)  =\left\vert G/H\right\vert .$ We
will use $\mathbf{1}$ to denote the identity operator acting in some Hilbert
space. Given two isomorphic groups $G,H$ we write $G\cong H.$

The reader who is not familiar with the theory of direct integrals is invited
to refer to \cite{Folland}.

\begin{definition}
Given a countable sequence $\left\{  f_{i}\right\}  _{i\in I}$ of vectors in
a Hilbert space $\mathcal{H},$ we say $\left\{  f_{i}\right\}  _{i\in I}$
forms a \textbf{frame} if and only if there exist strictly positive real
numbers $A,B$ such that for any vector $f\in\mathcal{H}$
\[
A\left\Vert f\right\Vert ^{2}\leq\sum_{i\in I}\left\vert \left\langle
f,f_{i}\right\rangle \right\vert ^{2}\leq B\left\Vert f\right\Vert ^{2}.
\]
In the case where $A=B$, the sequence of vectors $\left\{  f_{i}\right\}
_{i\in I}$ is called a \textbf{tight frame}, and if $A=B=1$, $\left\{
f_{i}\right\}  _{i\in I}$ is called a \textbf{Parseval frame}.
\end{definition}

\begin{remark}
If $\left\{  f_{i}\right\}  _{i\in I}$ is a \textbf{Parseval frame} such that
for all $i\in I,\left\Vert f_{i}\right\Vert =1$, then $\left\{  f_{i}\right\}
_{i\in I}$ is an orthonormal basis for $\mathcal{H}$.
\end{remark}

\begin{definition}
A lattice $\Lambda$ in $\mathbb{R}^{d}$ is a discrete additive subgroup of
$\mathbb{R}^{d}$. In other words, every point in $\Lambda$ is isolated and
$\Lambda=A\mathbb{Z}^{d}$ for some matrix $A$. We say $\Lambda$ is a full rank
lattice if $A$ is nonsingular, and we denote the dual of $\Lambda$ by
$\left(A^{-1}\right)^{tr}\Lambda.$ A \textbf{fundamental domain (or transveral)} $D$ for a
lattice in $\mathbb{R}^{d}$ is a measurable set such that the following hold:
\[
(D+l)\cap(D+l^{\prime})\neq\emptyset
\]
for distinct $l,$ $l^{\prime}$ in $\Lambda,$ and $\mathbb{R}^{d}%
={\bigcup\limits_{l\in\Lambda}}\left(  D+l\right)  .$ 
\end{definition}

\begin{definition}
Let $\Lambda=A\mathbb{Z}^{d}\times B\mathbb{Z}^{d}$ be a full rank lattice in
$\mathbb{R}^{2d}$ and $g\in L^{2}\left(  \mathbb{R}^{d}\right)  $. The family
of functions in $L^{2}\left(  \mathbb{R}^{d}\right)  $,
\begin{equation}
\mathcal{G}\left(  g,A\mathbb{Z}^{d}\times B\mathbb{Z}^{d}\right)  =\left\{
e^{2\pi i\left\langle k,x\right\rangle }g\left(  x-n\right)  :k\in
B\mathbb{Z}^{d},n\in A\mathbb{Z}^{d}\right\}  \label{Gabor}%
\end{equation}
is called a \textbf{Gabor system}.
\end{definition}

\begin{definition}
Let $m$ be the Lebesgue measure on $\mathbb{R}^{d}$, and consider a full rank
lattice $\Lambda=A\mathbb{Z}^{d}$ inside $\mathbb{R}^{d}$.

\begin{enumerate}
\item The \textbf{volume} of $\Lambda$ is defined as $vol\left(
\Lambda\right)  =m\left(  \mathbb{R}^{d}/\Lambda\right)  =\left\vert \det
A\right\vert .$

\item The \textbf{density} of $\Lambda$ is defined as $d\left(  \Lambda
\right)  =\dfrac{1}{\left\vert \det A\right\vert }.$
\end{enumerate}
\end{definition}

\begin{lemma}
\label{density}Given a separable full rank lattice $\Lambda=A\mathbb{Z}%
^{d}\times B\mathbb{Z}^{d}$ in $\mathbb{R}^{2d}$. The following statements are equivalent
\end{lemma}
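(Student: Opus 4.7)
The statement is truncated in the excerpt: only the hypothesis ``$\Lambda = A\mathbb{Z}^d\times B\mathbb{Z}^d$ is a separable full rank lattice'' and the phrase ``the following statements are equivalent'' are visible, with no items listed. Based on the surrounding discussion, the lemma is almost certainly the standard density characterization for a separable Gabor lattice, most likely equivalences among: (i) the volume/density condition $|\det(AB)|\le 1$, equivalently $d(\Lambda)\ge 1$; (ii) existence of $g\in L^{2}(\mathbb{R}^{d})$ such that $\mathcal{G}(g,\Lambda)$ is a Parseval frame (or even an orthonormal basis when $|\det(AB)|=1$); and (iii) the Gabor representation $(n,k)\mapsto M_{k}T_{n}$ being (unitarily equivalent to) a subrepresentation of the left regular representation of the associated discrete time-frequency group $G$, in line with the remark in the introduction that this happens iff $|\det B|\le 1$.

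The plan is to arrange the implications cyclically, using the Zak transform $Z_{A}$ associated with the lattice $A\mathbb{Z}^d$ to convert both the frame condition and the density condition into pointwise statements on a fundamental domain. First I would trivialize the separable lattice structure by a change of variables sending $A\mathbb{Z}^d\times B\mathbb{Z}^d$ to $\mathbb{Z}^d\times (AB^{-tr})\mathbb{Z}^d$, reducing to the case $A=I$ and isolating the single invariant $|\det(AB)|$. Then (i) $\Rightarrow$ (ii) follows from an explicit construction: a characteristic-function-type generator on a fundamental domain of the adjoint lattice $\Lambda^{\circ}=B^{-tr}\mathbb{Z}^d\times A^{-tr}\mathbb{Z}^d$, normalized by the density factor, gives a Parseval Gabor system via the Wexler--Raz biorthogonality relations.

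For (ii) $\Rightarrow$ (i), I would invoke the standard density theorem for Gabor frames (or reprove it in this special lattice case via the Janssen representation of the frame operator): the existence of any Bessel/frame sequence forces $d(\Lambda)\ge 1$, with equality exactly when the system is a Riesz basis. For (ii) $\Leftrightarrow$ (iii), the intertwining operator is the analysis operator $g\mapsto\{\langle g, \pi(\gamma)g_0\rangle\}_{\gamma\in\Lambda}$; its being an isometry into $\ell^{2}(\Lambda)$ is precisely the Parseval frame condition on the orbit, while its range being $G$-invariant in the regular representation is automatic since orbit coefficients transform by left translation. Finally, to connect back to (i) via representation theory, I would use the admissibility criterion that a representation is a subrepresentation of the left regular one iff it admits an admissible vector, which in the discrete setting reduces to the Parseval orbit condition.

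The main obstacle is the direction (ii) $\Rightarrow$ (i) in its most delicate form: at the critical density $|\det(AB)|=1$ one must rule out the overcomplete Parseval frames that happen to be normalized tight frames but fail to be bases; this is where the Zak-transform/Wexler--Raz analysis is essential, and it is also the point at which the ``separable'' hypothesis is genuinely used, since it is what makes the Zak transform diagonalize the Gabor frame operator.
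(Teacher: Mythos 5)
The first thing to note is that the paper does not prove this lemma at all: immediately after the statement it refers the reader to Theorem 3.3 of \cite{Han}, so your proposal is being measured against a citation rather than an argument, and any self-contained proof would necessarily be ``a different route.'' Second, you reconstructed the list of equivalent statements slightly wrong. The paper's three items are: (a) existence of $f$ with $\mathcal{G}(f,A\mathbb{Z}^{d}\times B\mathbb{Z}^{d})$ a Parseval frame, (b) $\left\vert \det A\det B\right\vert \leq 1$, and (c) existence of $f$ with $\mathcal{G}(f,A\mathbb{Z}^{d}\times B\mathbb{Z}^{d})$ merely \emph{complete} in $L^{2}(\mathbb{R}^{d})$ --- not the representation-theoretic condition you substituted. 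This is not cosmetic: completeness is far weaker than the frame property, so the implication ``complete $\Rightarrow$ $vol(\Lambda)\leq 1$'' is strictly stronger than the frame density theorem you invoke, and your sketch never addresses it.

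Beyond that, your construction direction has a genuine gap. If you write out Wexler--Raz (or compute directly) for a window of the form $g=\left\vert \det B\right\vert ^{1/2}\chi_{\Omega}$, what is actually required is a set $\Omega\subset\mathbb{R}^{d}$ that simultaneously \emph{tiles} $\mathbb{R}^{d}$ under $A\mathbb{Z}^{d}$ and \emph{packs} under $B^{-tr}\mathbb{Z}^{d}$; a generic fundamental domain of the adjoint lattice does not have this property (and a fundamental domain of $\Lambda^{\circ}=B^{-tr}\mathbb{Z}^{d}\times A^{-tr}\mathbb{Z}^{d}$ lives in $\mathbb{R}^{2d}$, so it is not even a candidate window). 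The existence of such a common tile/packing set whenever $\left\vert \det A\right\vert \leq\left\vert \det B\right\vert ^{-1}$ is precisely the lattice-tiling theorem that is the heart of Han and Wang's paper, and it is nontrivial exactly when $A\mathbb{Z}^{d}$ and $B^{-tr}\mathbb{Z}^{d}$ are irrationally related. Your fallback tool fails in the same regime: the Zak transform diagonalizes the Gabor frame operator only when the modulation lattice is rationally related to the dual of the translation lattice, so ``separable'' alone does not make that machinery available, contrary to the claim in your last paragraph. (Your worry about critical density is also misplaced: for (a) $\Rightarrow$ (b) one never needs to distinguish Parseval frames from orthonormal bases; by Lemma \ref{volume} a Parseval Gabor frame automatically has $\Vert f\Vert^{2}=vol(\Lambda)$.) In short, the skeleton of your argument is reasonable, but the two steps you treat as routine --- the simultaneous tiling/packing construction and the completeness implication --- are the actual content of the cited theorem.
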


\begin{enumerate}
\item There exits $f\in L^{2}(\mathbb{R}^{d})$ such that $\mathcal{G}\left(
f,A\mathbb{Z}^{d}\times B\mathbb{Z}^{d}\right)  $ is a Parseval frame in
$L^{2}\left(  \mathbb{R}^{d}\right)  .$

\item $vol\left(  \Lambda\right)  =\left\vert \det A\det B\right\vert \leq1.$

\item There exists $f\in L^{2}\left(  \mathbb{R}^{d}\right)  $ such that
$\mathcal{G}\left(  f,A\mathbb{Z}^{d}\times B\mathbb{Z}^{d}\right)  $ is
complete in $L^{2}\left(  \mathbb{R}^{d}\right)  .$
\end{enumerate}

For a proof of the above lemma, we refer the reader to Theorem $3.3$ in \cite{Han}.

\begin{lemma}
\label{volume} Let $\Lambda$ be a full-rank lattice in $\mathbb{R}^{2}$. There
exists a function $f\in L^{2}\left(  \mathbb{R}^{d}\right)  $ such that
$\mathcal{G}\left(  f,\Lambda\right)  $ is an orthonormal basis if and only if
$vol\left(  \Lambda\right)  =1.$ Also, if $\mathcal{G}\left(  f,\Lambda
\right)  $ is a Parseval frame for $L^{2}(\mathbb{R}^{d})$ then $\Vert
f\Vert^{2}=vol(\Lambda).$
\end{lemma}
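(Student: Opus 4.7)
The plan is to first establish the norm identity $\|f\|^2 = vol(\Lambda)$ for any $f$ such that $\mathcal{G}(f,\Lambda)$ is a Parseval frame, and then derive the orthonormal basis characterization as a direct consequence of this identity together with Lemma \ref{density} and the remark following the definition of a Parseval frame.

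For the norm identity, I would use Moyal's orthogonality relation for the short-time Fourier transform $V_f f(y,\eta) := \langle f, M_\eta T_y f\rangle$, namely $\int_{\mathbb{R}^{2d}} |V_f f|^2 \, dy\, d\eta = \|f\|^4$. A direct computation based on the commutation relation $T_a M_b = e^{-2\pi i\langle b,a\rangle} M_b T_a$ gives the covariance $|V_f(M_\eta T_y f)(x,\xi)| = |V_f f(x-y,\xi-\eta)|$, where the phase factors cancel upon taking absolute values. Applying the Parseval identity to the test vector $g = M_\eta T_y f$ (whose norm is $\|f\|$) then yields
\[
\|f\|^2 = \sum_{\lambda \in \Lambda} |V_f f(\lambda - (y,\eta))|^2
\]
for every $(y,\eta) \in \mathbb{R}^{2d}$. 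Integrating both sides over a fundamental domain $D$ of $\Lambda$ turns the left-hand side into $vol(\Lambda)\|f\|^2$, while a change of variables $z = \lambda - (y,\eta)$ together with the tiling property $\mathbb{R}^{2d} = \bigsqcup_{\lambda \in \Lambda}(\lambda - D)$ converts the right-hand side into $\int_{\mathbb{R}^{2d}} |V_f f|^2$, which equals $\|f\|^4$ by Moyal. Cancelling a factor of $\|f\|^2$ (with $f = 0$ being vacuous) gives $vol(\Lambda) = \|f\|^2$.

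The orthonormal basis characterization is then immediate. If $\mathcal{G}(f,\Lambda)$ is an orthonormal basis it is in particular a Parseval frame of unit vectors, so $\|f\| = 1$ and the norm identity forces $vol(\Lambda) = 1$. Conversely, if $vol(\Lambda) = 1$, Lemma \ref{density} produces a Parseval generator $f$, and the norm identity then gives $\|f\| = 1$; since every element of $\mathcal{G}(f,\Lambda)$ has the same norm as $f$, the remark after the definition of a Parseval frame upgrades the system to an orthonormal basis. The only real work in this plan is the careful bookkeeping required to verify that the covariance of $|V_f f|$ under time-frequency shifts cleanly produces the translated lattice sum used above; everything else is a standard manipulation of Moyal's formula and the tiling property of a fundamental domain.
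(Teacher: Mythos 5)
Your proposal is correct, but it is worth noting that the paper does not actually prove this lemma at all: it simply cites Theorem 3.3 and related results of Han and Wang \cite{Han}, whose arguments go through lattice tilings and Zak-transform techniques. What you give instead is a self-contained proof by the classical averaging (periodization) argument: Moyal's orthogonality relation $\int_{\mathbb{R}^{2d}}\left\vert V_{f}f\right\vert ^{2}=\left\Vert f\right\Vert ^{4}$, the covariance $\left\vert V_{f}\left(  M_{\eta}T_{y}f\right)  \left(  x,\xi\right)  \right\vert =\left\vert V_{f}f\left(  x-y,\xi-\eta\right)  \right\vert$ (which indeed follows from $T_{a}M_{b}=e^{-2\pi i\left\langle b,a\right\rangle }M_{b}T_{a}$, the phase disappearing under the modulus), testing the Parseval identity against $g=M_{\eta}T_{y}f$, and integrating over a fundamental domain so that the tiling $\mathbb{R}^{2d}=\bigcup_{\lambda\in\Lambda}\left(  \lambda-D\right)$ collapses the lattice sum into the full integral; the interchange of sum and integral is justified by Tonelli since all terms are nonnegative, and the degenerate case $f=0$ cannot occur for a Parseval frame of the nonzero space $L^{2}\left(  \mathbb{R}^{d}\right)$. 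The deduction of the orthonormal-basis characterization from the norm identity, Lemma \ref{density}, and the paper's remark that a Parseval frame of unit vectors is an orthonormal basis is exactly right in both directions. Your route buys transparency and independence from \cite{Han} (it only needs Moyal's formula), at the cost of implicitly working within the paper's setting of separable lattices $\Lambda=A\mathbb{Z}^{d}\times B\mathbb{Z}^{d}$: the paper's Gabor system and Lemma \ref{density} are only formulated for such lattices, so your converse direction (existence of a Parseval generator when $vol\left(  \Lambda\right)  =1$) inherits that restriction, whereas the norm identity itself works for any lattice of time-frequency shifts. You should also silently correct the statement's typo: the lattice lives in $\mathbb{R}^{2d}$, not $\mathbb{R}^{2}$.
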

A proof of the Lemma \ref{volume} is given in \cite{Han}. Now, let $
\mathbb{F}=\left\langle T_{x},M_{y}\text{ }|\text{ }x\in%
\mathbb{R}
^{d},y\in%
\mathbb{R}
^{d}\right\rangle$ and let $f$ be a square-integrable function over $%
\mathbb{R}
^{d}.$ It is easy to see that%
\begin{equation}
T_{x}M_{y}T_{x}^{-1}M_{y}^{-1}f=e^{-2\pi i\left\langle y,x\right\rangle }f,
\label{action}%
\end{equation}
and $e^{-2\pi i\left\langle y,x\right\rangle }$ is a central element of the
group $\mathbb{F}$. Thus $\mathbb{F}$ is a non-commutative, connected,
\textbf{non-simply} connected two-step nilpotent Lie group. In fact
$\mathbb{F}$ is isomorphic to the \textbf{reduced} $2d+1$ dimensional
Heisenberg group. The $2d+1$-dimensional Heisenberg group has Lie algebra
\[
\mathfrak{h}=%
\mathbb{R}
\text{-span }\left\{  X_{1},\cdots,X_{d,}Y_{1},\cdots,Y_{d},Z\right\}
\]
with non-trivial Lie brackets $\left[  X_{i},Y_{i}\right]  =Z.$ Let
\begin{equation}
\mathbb{H}=\exp\left(  \mathfrak{h}\right)  \label{Heisenberg}%
\end{equation}
Clearly, $\mathbb{H}$ is a simply connected, connected $2n+1$-dimensional
Heisenberg group, and $\exp\left(
\mathbb{Z}
Z\right)  $ is a discrete central subgroup of $\mathbb{H}$. Moreover,
$\mathbb{H}$ is the universal covering group of $\mathbb{F}$. That is,
$\mathbb{F\ }$is isomorphic to the group $\mathbb{H}/\exp\left(
\mathbb{Z}
Z\right)  .$

We will now provide a light introduction to the notion of admissibility of
unitary representations. A more thorough exposure to the theory is given in
\cite{hartmut}.  However, we will discuss part of
the material which is necessary to fully understand the results obtained in
our work. Let $\pi$ be a unitary representation of a locally compact
group $X,$ acting in some Hilbert space $H.$ We say that $\pi$ is \textbf{admissible}, if and only if there
exists some vector $\phi\in H$ such that the operator $W_{\phi}$
\[
W_{\phi}:H\rightarrow L^{2}\left(  X\right)  \text{ where }W_{\phi}\psi\left(
x\right)  =\left\langle \psi,\pi\left(  x\right)  \phi\right\rangle 
\]
defines an isometry from $H$ to $L^2\left(X\right)$. Let $\left(  \rho,\mathcal{K}\right)  $ be an
arbitrary type I representation of the group $
G=\left\langle T_{k},M_{l}:k\in\mathbb{Z}^{d},l\in B\mathbb{Z}^{d}\right\rangle .$ 
Moreover, let us suppose that $X$ is a type I unimodular group. Let us also
suppose that we are able to obtain a direct integral decomposition of $\rho$
as follows%
\[
\rho\cong\int_{\widehat{X}}^{\oplus}\left(  \oplus_{k=1}^{n_{\pi}}\pi\right)
\text{ }d\overline{\mu}\left(  \pi\right) 
\]
where $d\overline{\mu}$ is a measure defined on the unitary dual of $X.$ According to well-known theorems developed in \cite{hartmut}; $\left(
\rho,\mathcal{K}\right)  $ is admissible if and only if it is unitarily
equivalent with a subrepresentation of the left regular representation, and
the multiplicity function is integrable over the spectrum $\rho$. That
is:\ $\overline{\mu}$ is absolutely continuous with respect to the Plancherel
measure $\mu$ supported on $\widehat{X}$ and $\int_{\widehat{X}}n_{\pi
}d\overline{\mu}\left(  \pi\right)  <\infty.$ If a representation $\rho$ is
\textbf{admissible}, in theory it is known (see \cite{hartmut}) how to
construct all \textbf{admissible vectors}. Let us describe such process in
general terms. First, we must construct a unitary operator
\[
U:\mathcal{K}\rightarrow\int_{\widehat{X}}^{\oplus}\left(  \oplus
_{k=1}^{n_{\pi}}\mathcal{H}_{\pi}\right)  \text{ }d\overline{\mu}\left(
\pi\right)
\]
intertwining the representation $\rho$ with $\int_{\widehat{X}}^{\oplus}%
n_{\pi}\pi$ $d\overline{\mu}\left(  \pi\right)  .$ Next, we define a
measurable field $\left(  \mathbf{F}_{\lambda}\right)  _{\lambda\in
\widehat{X}}$ of operators in $\int_{\widehat{X}}^{\oplus}\oplus_{k=1}%
^{n_{\pi}}\mathcal{H}_{\pi}$ $d\overline{\mu}\left(  \pi\right)  $ such that
each operator $\mathbf{F}_{\lambda}$ is an isometry. All admissible vectors
are of the type $U^{-1}\left(  \left(  \mathbf{F}_{\lambda}\right)  _{\lambda\in
\widehat{X}}\right)  .$

In the remainder of the paper, we will focus on time-frequency  groups.
We will compute the Plancherel measure of the group whenever $G$ is type I,
and we will obtain a direct integral decomposition of the left regular
representation if $G$ is not type I and $d=1$. Some application to Gabor
theory will be discussed throughout the paper as well.

\section{Normal Subgroups of $G$}

In this section, we will study the structure of normal subgroups of the time frequency group. The reason why this section is important, is because
part of what the Mackey machine \cite{Lipsman2} needs is an explicit
description of the unitary dual of type I normal subgroups in order to compute
the unitary dual of the whole group. In this paper, unless we state otherwise,
$G$ stands for the following group:
\[
\left\langle T_{k},M_{l}\text{ }|\text{ }k\in%
\mathbb{Z}
^{d},l\in B%
\mathbb{Z}
^{d}\right\rangle .
\]
We recall that the subgroup generated by operators of the type $T_{k}\circ
M_{l}\circ T_{k}^{-1}\circ M_{l}^{-1}$ is called the commutator subgroup of
$G$ and is denoted $\left[  G,G\right]  $. From now on, to simplify the
notation, we will simply omit the symbol $\circ$ whenever we are composing operators.

\begin{lemma}
\label{GG}$\left[  G,G\right]  $ is isomorphic to a subgroup of the torus.

\begin{enumerate}
\item If $B\in GL\left(  d,%
\mathbb{Z}
\right)  $ then $G$ is commutative and isomorphic to $%
\mathbb{Z}
^{d}\times B%
\mathbb{Z}
^{d}$.

\item If $B\ $is in $GL\left(  d,%
\mathbb{Q}
\right)  -GL\left(  d,%
\mathbb{Z}
\right)  $ then $\left[  G,G\right]  $ is a central subgroup of $G$, and is a
cyclic group$.$ $G$ is not commutative but it is a type I discrete unimodular group.

\item If $B\in GL\left(  d,%
\mathbb{R}
\right)  -GL\left(  d,%
\mathbb{Q}
\right)  $ then, $G$ is a non-commutative two-step nilpotent group, and its
commutator subgroup is a infinite subgroup of the circle group.
\end{enumerate}
\end{lemma}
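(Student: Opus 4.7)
The plan is to exploit the single commutator identity \eqref{action} uniformly across the three cases, and let the arithmetic of $B$ do the work. Since $G$ is generated by the $T_k$ and $M_l$, and \eqref{action} yields $T_kM_lT_k^{-1}M_l^{-1} = e^{-2\pi i\langle l,k\rangle}\mathbf{1}$, the commutator subgroup $[G,G]$ is generated by scalar multiples of the identity operator and therefore embeds into $\mathbb{T}$. This already proves the opening assertion of the lemma and shows $[G,G]$ is central in $G$, so $G$ is two-step nilpotent in all three cases. Writing $l = Bm$ with $m\in\mathbb{Z}^d$, the relevant set of exponents is $\{\langle Bm,k\rangle \bmod \mathbb{Z} : m,k \in \mathbb{Z}^d\}$, and each dichotomy on $B$ translates into a structural statement about this set.

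For (a), the entries of $B$ lie in $\mathbb{Z}$, so every $\langle Bm,k\rangle$ is an integer and the generating commutators are trivial; thus $G$ is abelian. For the explicit isomorphism $G \cong \mathbb{Z}^d \times B\mathbb{Z}^d$ I would use the faithful action on $L^2(\mathbb{R}^d)$: distinct $k$'s produce distinct translations and distinct $l$'s produce distinct modulations, so the map $(k,l)\mapsto T_kM_l$ is an injective homomorphism, and surjectivity is immediate once commutativity is established. For (c), if some entry $B_{ji}$ of $B$ is irrational, take $m=e_i$ and $k=e_j$: then $\langle Be_i,e_j\rangle = B_{ji}\notin \mathbb{Q}$, so $e^{-2\pi i B_{ji}}$ has infinite order in $\mathbb{T}$ and $[G,G]$ contains an infinite cyclic subgroup of the circle.

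Case (b) is the technical core. Since each entry $B_{ji}$ is rational, the image of $[G,G]$ in $\mathbb{T}$ is the subgroup of $\mathbb{Q}/\mathbb{Z}$ generated by these $B_{ji}$. Letting $N$ denote the least common multiple of the denominators, this subgroup sits inside the cyclic group $\tfrac{1}{N}\mathbb{Z}/\mathbb{Z}$, hence is itself finite cyclic; it is non-trivial precisely because $B\notin GL(d,\mathbb{Z})$. Discreteness of $G$ makes it automatically unimodular.

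The hard step will be type I. The plan is to exhibit an abelian subgroup of finite index and invoke the standard fact (derivable from the Mackey machinery cited later in the paper) that a virtually abelian discrete group is type I. Concretely, set $H = \langle T_k^N,\, M_l^N : k\in\mathbb{Z}^d,\, l\in B\mathbb{Z}^d\rangle \cdot [G,G]$. The identity $T_k^NM_l = e^{-2\pi i N\langle l,k\rangle}M_lT_k^N$ collapses to genuine commutation because $N\langle l,k\rangle \in \mathbb{Z}$, and the symmetric computation handles the remaining cross-relations, so $H$ is abelian. On the other hand, since the map $(k,l)\mapsto T_kM_l$ induces an isomorphism $G/[G,G]\cong \mathbb{Z}^d\times B\mathbb{Z}^d\cong \mathbb{Z}^{2d}$, the quotient $G/H$ is an abelian group in which each of the $2d$ natural generators has order dividing $N$, hence is finite. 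This completes the type I assertion and, together with the previous two cases, the lemma.
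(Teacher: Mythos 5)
Your proposal is correct, and for the torus embedding, part (a), part (c), and the cyclicity claim in (b) it runs along essentially the same lines as the paper: the identity $T_{k}M_{l}T_{k}^{-1}M_{l}^{-1}=e^{-2\pi i\left\langle l,k\right\rangle }\mathbf{1}$ reduces everything to the arithmetic of the exponents $\left\langle Bm,k\right\rangle \bmod \mathbb{Z}$, and in the rational case the least common multiple of the denominators traps $\left[ G,G\right] $ in a finite, hence cyclic, subgroup of $\mathbb{T}$, exactly as in the paper's lcm argument. Where you genuinely diverge is on the type I assertion of (b) --- which, notably, the paper's own proof of this lemma never addresses: the authors establish it only later, by constructing the normal abelian subgroup $N=\left\langle T_{k},M_{l},\tau :k\in B^{-tr}\mathbb{Z}^{d}\cap \mathbb{Z}^{d},\ l\in B\mathbb{Z}^{d},\ \tau \in \left[ G,G\right] \right\rangle $, proving $G/N$ is finite (Lemma \ref{finite}), and citing Kleppner--Lipsman for the fact that a finite extension of an abelian group is type I. You instead build the congruence-type subgroup $H=\left\langle T_{k}^{N},M_{l}^{N}\right\rangle \cdot \left[ G,G\right] $, use $N\left\langle l,k\right\rangle \in \mathbb{Z}$ to see it is abelian, observe that $G/H$ is a quotient of $\left( \mathbb{Z}/N\mathbb{Z}\right) ^{2d}$, and invoke the standard fact (Thoma's theorem) that a virtually abelian discrete group is type I. Both routes are valid; yours has the advantage of being self-contained within the lemma, and in fact your $H$ is even normal in $G$ (conjugation by $T_{s}$ or $M_{s}$ twists its generators only by the phases $e^{\pm 2\pi iN\left\langle l,s\right\rangle }=1$), so the Kleppner--Lipsman argument would apply to it directly as well. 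What the paper's choice buys instead is economy downstream: their $N$ does double duty, since it is precisely the subgroup later fed into the Mackey machine to parametrize $\widehat{G}$ and compute the Plancherel measure, so type I comes as a byproduct of structure they need anyway, whereas your $H$ serves only the type I claim.
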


\begin{proof}
To show part (a), let $l\in B%
\mathbb{Z}
^{d},k\in%
\mathbb{Z}
^{d}$ where $B$ is a non-invertible matrix. Let $f\in L^{2}\left(
\mathbb{R}
^{d}\right)  .$ We have
\[
T_{k}M_{l}T_{k}^{-1}M_{l}^{-1}f=e^{-2\pi i\left\langle l,k\right\rangle
}f=\chi_{l}\left(  k\right)  f.
\]
If $B\in GL\left(  d,%
\mathbb{Z}
\right)  $ then the commutator subgroup of $G$ is trivial, and $G$ is an
abelian group isomorphic to $%
\mathbb{Z}
^{d}\times B%
\mathbb{Z}
^{d}.$ For part (b), let us suppose that $B\in GL\left(  d,%
\mathbb{Q}
\right)  -GL\left(  d,%
\mathbb{Z}
\right)  .$ So, $B=\left[  p_{ij}/q_{ij}\right]  _{1\leq i,j\leq d}$ where
$p_{ij},q_{ij}$ are integral values, $\gcd\left(  p_{ij},q_{ij}\right)  =1$
and $q_{ij}\neq0$ for $1\leq i,j\leq d.$ Let $m=\operatorname{lcm}\left(
q_{ij}\right)  _{1\leq i,j\leq d}.$ Clearly
\[
\chi_{l}^{m}\left(  k\right)  =1\text{ for all }l\in B%
\mathbb{Z}
^{d},\text{ and }k\in%
\mathbb{Z}
^{d}.
\]
Thus, $\left[  G,G\right]  $ is a finite abelian proper closed subgroup of the
circle group. As a result $\left[  G,G\right]  $ is cyclic. For part (c), if
$B\in GL\left(  d,%
\mathbb{R}
\right)  -GL\left(  d,%
\mathbb{Q}
\right)  $ then the commutator subgroup of $G$ is not isomorphic to a finite
subgroup of the torus. That is, there exist $l\in B%
\mathbb{Z}
^{d},k\in%
\mathbb{Z}
^{d}$ such that the set $\left\{  \chi_{l}^{m}\left(  k\right)  :m\in%
\mathbb{Z}
\right\}  $ is not closed and is dense in the torus $\mathbb{T}$.
\end{proof}

\begin{example}
Let $G$ be group generated $T_{k},M_{l}$ such that $k\in%
\mathbb{Z}
^{2},l\in B%
\mathbb{Z}
^{2}$
\[
B=\left[
\begin{array}
[c]{cc}%
1/2 & 1/5\\
2/3 & -3/4
\end{array}
\right]
\]
then, $\left[  G,G\right]  $ is isomorphic to $%
\mathbb{Z}
_{60}.$
\end{example}

\begin{example}
Let $G$ be group generated $T_{k},M_{l}$ such that $k\in%
\mathbb{Z}
^{2},l\in B%
\mathbb{Z}
^{2}$ such that
\[
B=\left[
\begin{array}
[c]{cc}%
\sqrt{2} & 1\\
-1 & 2
\end{array}
\right]
\]
then $\left[  G,G\right]  $ is isomorphic\ to an infinite subgroup of the circle.
\end{example}

Assuming that $B\ $is in $GL\left(  d,%
\mathbb{Q}
\right)  ,$ we will construct an abelian normal subgroup of $G.$ For that
purpose, we will need to define the groups
\[
N_{1}=\left\langle T_{k},M_{l}\text{ }|\text{ }k\in B^{-tr}%
\mathbb{Z}
^{d},l\in B%
\mathbb{Z}
^{d}\right\rangle ,
\]
and
\[
N_{2}=\left\langle T_{k},M_{l}\text{ }|\text{ }k\in B^{-tr}%
\mathbb{Z}
^{d}\cap%
\mathbb{Z}
^{d},l\in B%
\mathbb{Z}
^{d}\right\rangle .
\]
Notice that in general $N_{1}$ is not a subgroup of $G$ because the lattice $%
\mathbb{Z}
^{d}$ is not invariant under the action of $B^{-tr}$ if $B^{-tr}$ has non
integral rational entries. However, the group $N_{1}$ will be important in
constructing the unitary dual of $G,$ and we will need to study some of its characteristics.

\begin{lemma}
\label{abelian}If $B$ is an element of $GL\left(  d,%
\mathbb{Q}
\right)  -GL\left(  d,%
\mathbb{Z}
\right)  $ then
\[
N_{1}=\left\langle T_{k},M_{l}\text{ }|\text{ }k\in B^{-tr}%
\mathbb{Z}
^{d},l\in B%
\mathbb{Z}
^{d}\right\rangle
\]
is an abelian group.
\end{lemma}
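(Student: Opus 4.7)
The plan is to exploit the basic commutation identity $T_k M_l T_k^{-1} M_l^{-1} = e^{-2\pi i\langle l,k\rangle}\,\mathbf{1}$ that appears as equation (\ref{action}) in the preliminaries. Since the translation operators commute among themselves (as $T_k T_{k'} = T_{k+k'}$) and likewise the modulation operators commute among themselves (as $M_l M_{l'} = M_{l+l'}$), the only obstruction to $N_1$ being abelian is the interaction between translations and modulations. Thus it suffices to show that for every $k$ in the translation lattice $B^{-tr}\mathbb{Z}^d$ and every $l$ in the modulation lattice $B\mathbb{Z}^d$, the central scalar $e^{-2\pi i\langle l,k\rangle}$ equals $1$, i.e., that $\langle l,k\rangle \in \mathbb{Z}$.

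Next, I would write $l = Bm$ with $m\in \mathbb{Z}^d$ and $k = B^{-tr} n$ with $n\in \mathbb{Z}^d$, and compute directly
\[
\langle l,k\rangle \;=\; \langle Bm,\, B^{-tr} n\rangle \;=\; (Bm)^{tr}(B^{-tr} n) \;=\; m^{tr} B^{tr} B^{-tr} n \;=\; m^{tr} n \;\in\; \mathbb{Z},
\]
using that $B^{tr} B^{-tr} = I$. Hence $e^{-2\pi i\langle l,k\rangle} = 1$ and consequently $T_k M_l = M_l T_k$ for all pairs of generators of $N_1$.

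From this pointwise commutation of generators, it follows that every element of $N_1$ can be put in the normal form $T_k M_l$ (or equivalently $M_l T_k$) with $k\in B^{-tr}\mathbb{Z}^d$ and $l\in B\mathbb{Z}^d$, and that the product law on these normal forms is coordinate-wise addition in the two lattices. Therefore $N_1$ is abelian; in fact it is canonically isomorphic to $B^{-tr}\mathbb{Z}^d \times B\mathbb{Z}^d$.

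There is no genuine obstacle here: the whole argument reduces to the one-line identity $B^{tr}B^{-tr}=I$ applied inside the bilinear pairing. The only point worth being careful about is the hypothesis $B\in GL(d,\mathbb{Q})-GL(d,\mathbb{Z})$, which is used earlier to ensure that $N_1$ is genuinely larger than $G$ and is not itself a subgroup of $G$ in general; it plays no role in the abelianness proof beyond guaranteeing that $B^{-tr}$ makes sense as an element of $GL(d,\mathbb{Q})$, so the chosen translation lattice is a well-defined discrete additive subgroup of $\mathbb{R}^d$.
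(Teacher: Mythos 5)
Your proof is correct and follows essentially the same route as the paper: both arguments reduce to the commutator identity $T_{k}M_{l}T_{k}^{-1}M_{l}^{-1}=e^{-2\pi i\left\langle l,k\right\rangle }\mathbf{1}$ and the observation that for $l=Bm$, $k=B^{-tr}n$ with $m,n\in\mathbb{Z}^{d}$ one has $\left\langle Bm,B^{-tr}n\right\rangle =\left\langle m,n\right\rangle \in\mathbb{Z}$, so all generators commute. Your additional remarks on normal forms and the isomorphism with $B^{-tr}\mathbb{Z}^{d}\times B\mathbb{Z}^{d}$ are fine but not needed beyond what the paper states.
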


\begin{proof}
Given $k\in B^{-tr}%
\mathbb{Z}
^{d}$ and $l\in B%
\mathbb{Z}
^{d},$ we recall that $
T_{k}M_{l}T_{k}^{-1}M_{l}^{-1}f\left(  x\right)  =e^{-2\pi i\left\langle
l,k\right\rangle }f.$ Since $k\in B^{-tr}%
\mathbb{Z}
^{d},$ and $l\in B%
\mathbb{Z}
^{d}$ then there exist $k^{\prime},l^{\prime}\in%
\mathbb{Z}
^{d}$ such that
\[
T_{k}M_{l}T_{k}^{-1}M_{l}^{-1}f=e^{-2\pi i\left\langle Bl^{\prime}%
,B^{-tr}k^{\prime}\right\rangle }f=e^{-2\pi i\left\langle l^{\prime}%
,k^{\prime}\right\rangle }f\left(  x\right)  =f.
\]
Thus, for any given $k\in B^{-tr}%
\mathbb{Z}
^{d},$ and $l\in B%
\mathbb{Z}
^{d},$ $T_{k}M_{l}T_{k}^{-1}M_{l}^{-1}$ is equal to the identity operator. It
follows that the commutator subgroup of $N_{1}$ is trivial.
\end{proof}

We recall the following lemmas from \cite{Unified}.

\begin{lemma}
Given two lattices $\Gamma_{1},\Gamma_{2},$ $\Gamma_{1}\cap\Gamma_{2}$ is a
lattice in $%
\mathbb{R}
^{d}$ if and only if there exists a lattice $\Gamma$ that contains both
$\Gamma_{1}$ and $\Gamma_{2}.$
\end{lemma}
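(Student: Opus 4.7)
The plan is to prove the two directions of the biconditional separately, and the forward direction will be the substantive one. Throughout, I read ``lattice'' as ``full-rank lattice'' in the sense of the paper's definition, since this is the setting in which the lemma will be applied (in particular, the fact that $\Gamma_1 \cap \Gamma_2$ is called a lattice on the left-hand side only carries content when full rank is required; otherwise $\Gamma_1 \cap \Gamma_2$ is trivially a discrete subgroup).

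For the easier backward direction, suppose a lattice $\Gamma$ contains both $\Gamma_1$ and $\Gamma_2$. Since $\Gamma_1, \Gamma_2 \subset \Gamma$ are all full-rank lattices in $\mathbb{R}^d$, the quotients $\Gamma/\Gamma_1$ and $\Gamma/\Gamma_2$ are finite abelian groups of some orders $n_1$ and $n_2$. Consequently $n_i \Gamma \subset \Gamma_i$ for $i=1,2$, so setting $m = \mathrm{lcm}(n_1,n_2)$ one has $m\Gamma \subset \Gamma_1 \cap \Gamma_2$. Since $m\Gamma$ is a full-rank lattice and $\Gamma_1 \cap \Gamma_2$ is a discrete subgroup of $\Gamma$ containing it, $\Gamma_1 \cap \Gamma_2$ is itself a full-rank lattice.

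For the forward direction, assume $\Gamma_1 \cap \Gamma_2$ is a full-rank lattice, and take the candidate $\Gamma := \Gamma_1 + \Gamma_2$, the subgroup generated by the two lattices. By construction $\Gamma \supset \Gamma_1$ and $\Gamma \supset \Gamma_2$, so the only issue is whether $\Gamma$ is a lattice, i.e.\ discrete. The key observation is the second isomorphism theorem:
\[
(\Gamma_1 + \Gamma_2)/\Gamma_2 \;\cong\; \Gamma_1/(\Gamma_1 \cap \Gamma_2).
\]
Since $\Gamma_1$ and $\Gamma_1 \cap \Gamma_2$ are both full-rank lattices in $\mathbb{R}^d$, the right-hand side is a finite abelian group. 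Hence $\Gamma_1 + \Gamma_2$ is a finite union of cosets of $\Gamma_2$, each of which is a translate of a discrete set and therefore discrete. A finite union of discrete subsets of $\mathbb{R}^d$ is discrete, so $\Gamma_1 + \Gamma_2$ is a discrete subgroup of $\mathbb{R}^d$ containing a full-rank lattice, hence itself a full-rank lattice.

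The main obstacle is really just noticing that the full-rank assumption on $\Gamma_1 \cap \Gamma_2$ is exactly what forces the index $[\Gamma_1 : \Gamma_1 \cap \Gamma_2]$ to be finite; without it, $\Gamma_1 + \Gamma_2$ could fail to be discrete (for instance $\mathbb{Z} + \sqrt{2}\mathbb{Z}$ in $\mathbb{R}$). Once this finite-index fact is isolated, both directions reduce to standard manipulations with sublattices of finite index.
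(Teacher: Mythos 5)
The paper never proves this lemma: it is one of two statements ``recalled from \cite{Unified}'' with no argument supplied, so there is no internal proof to compare yours against — your write-up fills a genuine gap rather than duplicating one. Your interpretive choice is also the right one, and worth making explicit as you did: under the paper's weak definition of lattice (any discrete subgroup $A\mathbb{Z}^{d}$, full rank not required), the statement is actually false, since $\mathbb{Z}$ and $\sqrt{2}\,\mathbb{Z}$ in $\mathbb{R}$ intersect in the lattice $\{0\}$ yet lie in no common lattice; the full-rank reading is what the paper's subsequent corollary (that $B^{-tr}\mathbb{Z}^{d}\cap\mathbb{Z}^{d}$ is a full-rank lattice) needs anyway. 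Both directions of your argument — the finite-index/lcm trick for the backward implication, and $\Gamma=\Gamma_{1}+\Gamma_{2}$ combined with the second isomorphism theorem $(\Gamma_{1}+\Gamma_{2})/\Gamma_{2}\cong\Gamma_{1}/(\Gamma_{1}\cap\Gamma_{2})$ for the forward one — are sound, and the closing remark correctly isolates where full rank is used.

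One justification does need tightening. The assertion ``a finite union of discrete subsets of $\mathbb{R}^{d}$ is discrete'' is false as stated: $\{0\}\cup\{1/n:n\geq 1\}$ is a union of two discrete sets that is not discrete. What saves your argument is that your sets are cosets of the lattice $\Gamma_{2}$, which are not merely discrete but uniformly discrete — two distinct points of $x+\Gamma_{2}$ differ by a nonzero element of $\Gamma_{2}$, hence are at distance at least $\min\{\,\lVert v\rVert : v\in\Gamma_{2},\ v\neq 0\,\}>0$ — and in particular are closed sets with no accumulation points in $\mathbb{R}^{d}$. A finite union of sets having no accumulation points has no accumulation points, so $\Gamma_{1}+\Gamma_{2}$ is discrete. (Equivalently: a non-discrete subgroup of $\mathbb{R}^{d}$ must accumulate at $0$, and accumulation at $0$ would force two distinct points of a single coset $x_{i}+\Gamma_{2}$ to be arbitrarily close, which is impossible.) With that one-line repair your proof is complete.
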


\begin{definition}
Let $\Gamma$ be a full-rank lattice in $%
\mathbb{R}
^{d}$ with generators
\[
\left[
\begin{array}
[c]{c}%
v_{1}^{1}\\
\vdots\\
v_{1}^{d}%
\end{array}
\right]  ,\left[
\begin{array}
[c]{c}%
v_{2}^{1}\\
\vdots\\
v_{2}^{d}%
\end{array}
\right]  ,\cdots,\left[
\begin{array}
[c]{c}%
v_{d}^{1}\\
\vdots\\
v_{d}^{d}%
\end{array}
\right]  \in%
\mathbb{R}
^{d}.
\]
The matrix of order $d$ below
\[
\left[
\begin{array}
[c]{ccc}%
v_{1}^{1} & \cdots & v_{d}^{1}\\
\vdots & \vdots & \vdots\\
v_{1}^{d} & \cdots & v_{d}^{d}%
\end{array}
\right]
\]
is called a basis for $\Gamma.$
\end{definition}

\begin{lemma}
\label{base} Let $\Gamma_{1},\Gamma_{2}$ be two distinct lattices with bases
$J,K$ respectively$.$ $\Gamma_{1}+\Gamma_{2},$ $\Gamma_{1}\cap\Gamma_{2}$ are
two lattices in $%
\mathbb{R}
^{d}$ if and only if $JK^{-1}$is a rational matrix. \ Moreover
\[
\dim\left(  \Gamma_{1}+\Gamma_{2}\right)  +\dim\left(  \Gamma_{1}\cap
\Gamma_{2}\right)  =\dim\Gamma_{1}+\dim\Gamma_{1}%
\]

\end{lemma}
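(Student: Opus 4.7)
The plan is to use the preceding lemma (which says $\Gamma_1\cap\Gamma_2$ is a lattice iff there exists a lattice containing both $\Gamma_1$ and $\Gamma_2$) as the main reduction, and to connect the ``common containing lattice'' criterion with the rationality of $JK^{-1}$ via a change-of-basis argument. Throughout, write $\Gamma_1=J\mathbb{Z}^d$ and $\Gamma_2=K\mathbb{Z}^d$ with $J,K\in GL(d,\mathbb{R})$, as stipulated by the paper's definition of a basis.

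For the implication $(\Leftarrow)$, assume $JK^{-1}$ is rational and pick a positive integer $N$ so that $M:=N\cdot JK^{-1}$ has integer entries. Then $J=\tfrac{1}{N}MK$, giving
$$\Gamma_1=J\mathbb{Z}^d=\tfrac{1}{N}MK\mathbb{Z}^d\subset\tfrac{1}{N}K\mathbb{Z}^d,$$
while trivially $\Gamma_2\subset\tfrac{1}{N}K\mathbb{Z}^d$. Thus both lattices sit inside the full-rank lattice $\Lambda:=\tfrac{1}{N}K\mathbb{Z}^d$. The preceding lemma immediately yields that $\Gamma_1\cap\Gamma_2$ is a lattice. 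For the sum, $\Gamma_1+\Gamma_2$ is a subgroup of the discrete set $\Lambda$, hence discrete, and it contains $\Gamma_1$ which already spans $\mathbb{R}^d$, so $\Gamma_1+\Gamma_2$ is itself a full-rank lattice.

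Conversely $(\Rightarrow)$, if $\Gamma_1\cap\Gamma_2$ is a lattice, the preceding lemma supplies a lattice $L\mathbb{Z}^d$ (with $L\in GL(d,\mathbb{R})$) containing both $\Gamma_1$ and $\Gamma_2$. The columns of $J$ and of $K$ then lie in $L\mathbb{Z}^d$, so $L^{-1}J$ and $L^{-1}K$ are integer matrices, whence
$$JK^{-1}=(L^{-1}J)(L^{-1}K)^{-1}$$
is rational (a product and inverse of integer matrices). The dimension identity is then immediate: under either equivalent hypothesis, all four of $\Gamma_1$, $\Gamma_2$, $\Gamma_1+\Gamma_2$ and $\Gamma_1\cap\Gamma_2$ are full-rank lattices in $\mathbb{R}^d$, so each side equals $2d$; more generally the same identity follows from additivity of $\mathbb{Z}$-rank applied to the short exact sequence $0\to\Gamma_1\cap\Gamma_2\to\Gamma_1\oplus\Gamma_2\to\Gamma_1+\Gamma_2\to 0$. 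The only real subtlety lies in $(\Leftarrow)$: one must exhibit the common containing lattice $\Lambda$ not merely to invoke the previous lemma for the intersection, but also to force discreteness of $\Gamma_1+\Gamma_2$, since without such a witness this sum could easily be a dense subgroup of $\mathbb{R}^d$.
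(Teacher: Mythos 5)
The paper offers no proof of this lemma at all (it is recalled verbatim from \cite{Unified}), so your attempt stands on its own; and while your overall plan --- reduce both assertions to the existence of a common containing lattice and quote the preceding lemma --- is the right one, both directions of your execution contain the same non-cosmetic error: you treat $JK^{-1}$ and $K^{-1}J$ as interchangeable. In $(\Leftarrow)$, the inclusion $\tfrac{1}{N}MK\mathbb{Z}^d\subset\tfrac{1}{N}K\mathbb{Z}^d$ is equivalent to $K^{-1}MK=N\,K^{-1}J$ being an integer matrix, and this does \emph{not} follow from $M=N\,JK^{-1}$ being integral, since $M$ need not commute with $K$. In $(\Rightarrow)$, the identity $JK^{-1}=(L^{-1}J)(L^{-1}K)^{-1}$ is false: the right-hand side equals $L^{-1}\left(JK^{-1}\right)L$, a conjugate of $JK^{-1}$; what your argument actually proves rational is $(L^{-1}K)^{-1}(L^{-1}J)=K^{-1}J$.

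The slip is forced, because under the column convention $\Gamma_1=J\mathbb{Z}^d$, $\Gamma_2=K\mathbb{Z}^d$ that you adopt (consistently with the paper's definition of a basis), the correct criterion is rationality of $K^{-1}J$ (equivalently $J^{-1}K$), and rationality of $JK^{-1}$ is neither sufficient nor necessary. Indeed, take
\[
K=\begin{pmatrix}1&0\\0&\sqrt{2}\end{pmatrix},\qquad J=\begin{pmatrix}1&\sqrt{2}\\0&\sqrt{2}\end{pmatrix},\qquad JK^{-1}=\begin{pmatrix}1&1\\0&1\end{pmatrix}.
\]
Here $JK^{-1}$ is integral, yet $\Gamma_1\cap\Gamma_2=\mathbb{Z}\times\{0\}$ has rank one and $\Gamma_1+\Gamma_2\supset\left(\mathbb{Z}+\sqrt{2}\,\mathbb{Z}\right)\times\{0\}$ is not discrete; in particular $(\sqrt{2},\sqrt{2})\in\Gamma_1$ shows your claimed containment $\Gamma_1\subset\tfrac{1}{N}K\mathbb{Z}^d$ already fails (here $N=1$). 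Conversely, with the same $K$ and $J=\begin{pmatrix}1&1\\0&2\sqrt{2}\end{pmatrix}$, both $\Gamma_1\cap\Gamma_2=\mathbb{Z}\times 2\sqrt{2}\,\mathbb{Z}$ and $\Gamma_1+\Gamma_2=\mathbb{Z}\times\sqrt{2}\,\mathbb{Z}$ are full-rank lattices while $JK^{-1}$ is irrational. Your proof becomes correct verbatim once every $JK^{-1}$ is replaced by $K^{-1}J$: integrality of $N\,K^{-1}J$ gives $J\mathbb{Z}^d\subset\tfrac{1}{N}K\mathbb{Z}^d$ at once, and the converse direction yields $K^{-1}J$ rational as noted above. (The $JK^{-1}$ form of the criterion is what one gets when bases act on row vectors, $\Gamma=\mathbb{Z}^dJ$, which is presumably the convention of \cite{Unified}; it is not the convention in force here.) The rank identity at the end, via your exact sequence, is fine.
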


\begin{remark}
\label{proc}If $\Gamma_{1}\cap\Gamma_{2}$ is a lattice, there is a well-known
technique given in \cite{Unified} (see page $809$) used to compute the basis of the
lattice $\Gamma_{1}\cap\Gamma_{2}$. We describe the procedure here. Let $J,K$
be bases for lattices $\Gamma_{1},$ and $\Gamma_{2}$ respectively. First we
compute the $d\times2d$ matrix $\left[  J\text{ }|K\right]  .$ Secondly, we
evaluate the \textbf{Hermite lower triangular form} of $\left[  J|K\right]  .$
This form has the structure $\left[  L|0\right]  ,$ and is obtained as
$\left[  J\text{ }|K\right]  E=\left[  L|0\right]  ,$ where $E$ is matrix of
order $2d$ obtained by applying elementary row operations to $\left[  J\text{
}|K\right]  .$ In fact, $E$ is a block matrix of the type
\[
E=\left[
\begin{array}
[c]{cc}%
R & S\\
C & D
\end{array}
\right]  ,\label{procedure}%
\]
and $R,S,C,$ and $D$ are matrices of order $d.$ Finally, a basis for the
lattice $\Gamma_{1}\cap\Gamma_{2}$ is then given by $KD.$ That is $\Gamma
_{1}\cap\Gamma_{2}=\left(  KD\right)
\mathbb{Z}
^{d}.$
\end{remark}

\begin{corollary}
If $B\ $is in $GL\left(  d,%
\mathbb{Q}
\right)  -GL\left(  d,%
\mathbb{Z}
\right)  $ then $B^{-tr}%
\mathbb{Z}
^{d}\cap%
\mathbb{Z}
^{d}$ is a full-rank lattice subgroup of $%
\mathbb{R}
^{d}.$
\end{corollary}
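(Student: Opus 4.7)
The plan is to apply Lemma~\ref{base} directly with $\Gamma_1 = \mathbb{Z}^d$ and $\Gamma_2 = B^{-tr}\mathbb{Z}^d$. Choose as bases the identity matrix $J = I_d$ for $\mathbb{Z}^d$ (its columns are the standard basis vectors) and $K = B^{-tr}$ for $B^{-tr}\mathbb{Z}^d$. Then $JK^{-1} = B^{tr}$, which is a rational matrix because $B \in GL(d,\mathbb{Q})$. So the hypothesis of Lemma~\ref{base} is satisfied, and $\mathbb{Z}^d \cap B^{-tr}\mathbb{Z}^d$ is a lattice in $\mathbb{R}^d$.

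To promote this to full rank, I invoke the dimension formula from Lemma~\ref{base}:
\[
\dim(\Gamma_1 + \Gamma_2) + \dim(\Gamma_1 \cap \Gamma_2) = \dim \Gamma_1 + \dim \Gamma_2 = 2d.
\]
Since $\Gamma_1 + \Gamma_2 \supseteq \mathbb{Z}^d$ and $\mathbb{Z}^d$ is already full rank in $\mathbb{R}^d$, we have $\dim(\Gamma_1 + \Gamma_2) = d$, and therefore $\dim(\Gamma_1 \cap \Gamma_2) = d$. This identifies $B^{-tr}\mathbb{Z}^d \cap \mathbb{Z}^d$ as a full-rank lattice subgroup of $\mathbb{R}^d$.

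No real obstacle arises; the only place the hypothesis $B \in GL(d,\mathbb{Q})$ is used is in verifying the rationality of $JK^{-1} = B^{tr}$, which is precisely the borderline that separates the present situation from the irrational case handled in Lemma~\ref{GG}(c). As an elementary sanity check one can bypass Lemma~\ref{base} entirely: pick a positive integer $n$ with $nB^{tr} \in M_d(\mathbb{Z})$ (such $n$ exists because $B$ has rational entries). Then for every $k \in \mathbb{Z}^d$ we have $B^{tr}(nk) = n B^{tr} k \in \mathbb{Z}^d$, so $nk$ belongs to the set $\{v \in \mathbb{Z}^d : B^{tr}v \in \mathbb{Z}^d\}$, which is exactly $\mathbb{Z}^d \cap B^{-tr}\mathbb{Z}^d$. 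Hence the intersection contains the full-rank lattice $n\mathbb{Z}^d$ and is a discrete additive subgroup of $\mathbb{R}^d$, so it is a full-rank lattice.
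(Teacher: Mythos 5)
Your proposal is correct and follows the paper's own route: the paper's entire proof is the one-line assertion that the result ``follows directly from Lemma \ref{base},'' and your first two paragraphs simply supply the verification the paper leaves implicit (bases $J=I_{d}$, $K=B^{-tr}$, rationality of $JK^{-1}=B^{tr}$, and the dimension count forcing $\dim\left(\Gamma_{1}\cap\Gamma_{2}\right)=d$). Your closing elementary argument, which exhibits the full-rank sublattice $n\mathbb{Z}^{d}$ inside $\mathbb{Z}^{d}\cap B^{-tr}\mathbb{Z}^{d}$, is a nice self-contained alternative that bypasses Lemma \ref{base} entirely, but the main line of reasoning coincides with the paper's.
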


\begin{proof}
The fact that $B^{-tr}%
\mathbb{Z}
^{d}\cap%
\mathbb{Z}
^{d}$ is a full-rank lattice follows directly from Lemma \ref{base}.
\end{proof}

We assume that $B\ $is an element of $GL\left(  d,%
\mathbb{Q}
\right)  -GL\left(  d,%
\mathbb{Z}
\right)  .$ We will prove that $N_{2}$ is a normal subgroup of $G$. However it
is not a maximal normal subgroup of $G$ since it does not contain the center
of the group. Thus, $N_{2}$ needs to be extended. For that purpose, we define
the group
\[
N=\left\langle T_{k},M_{l},\tau\text{ }|\text{ }k\in B^{-tr}%
\mathbb{Z}
^{d}\cap%
\mathbb{Z}
^{d},l\in B%
\mathbb{Z}
^{d},\tau\in\left[  G,G\right]  \right\rangle \subset G.
\]

\begin{proposition}
\label{normal}If $B\ $is in $GL\left(  d,%
\mathbb{Q}
\right)  $ then $N$ is an abelian normal subgroup of $G.$
\end{proposition}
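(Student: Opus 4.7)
The plan is to verify the two claims (abelian, normal) separately, both reducing to the canonical commutation relation $T_{k}M_{l}T_{k}^{-1}M_{l}^{-1}=e^{-2\pi i\langle l,k\rangle}\mathbf{1}$ derived in (\ref{action}).

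For the abelian part, I would organize the generators of $N$ into three classes: (i) translations $T_{k}$ with $k\in B^{-tr}\mathbb{Z}^{d}\cap\mathbb{Z}^{d}$, (ii) modulations $M_{l}$ with $l\in B\mathbb{Z}^{d}$, and (iii) the central scalars $\tau\in[G,G]$. Translations among themselves commute, modulations among themselves commute, and the class (iii) elements are central in all of $G$ and in particular in $N$. The only nontrivial commutator to check is $T_{k}M_{l}T_{k}^{-1}M_{l}^{-1}=e^{-2\pi i\langle l,k\rangle}\mathbf{1}$; writing $k=B^{-tr}k_{0}$ and $l=Bl_{0}$ with $k_{0},l_{0}\in\mathbb{Z}^{d}$ gives $\langle l,k\rangle=\langle l_{0},k_{0}\rangle\in\mathbb{Z}$, so this commutator is trivial. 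This is essentially the content of Lemma \ref{abelian}, applied to the subgroup $N_{2}\subset N_{1}$ together with the observation that the $[G,G]$ piece is central.

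For normality, I would show that conjugation of each generator of $N$ by each generator of $G$ lands back in $N$. Conjugation by the central factor $[G,G]$ is trivial. The four nontrivial cases use the identities $M_{n}T_{k}M_{n}^{-1}=e^{2\pi i\langle n,k\rangle}T_{k}$ and $T_{m}M_{l}T_{m}^{-1}=e^{-2\pi i\langle l,m\rangle}M_{l}$, for $m\in\mathbb{Z}^{d}$ and $n\in B\mathbb{Z}^{d}$. The decisive point is that in both cases the scalar factor is of the form $e^{\pm 2\pi i\langle b,a\rangle}$ with $b\in B\mathbb{Z}^{d}$ and $a\in\mathbb{Z}^{d}$, hence by the very definition of the commutator subgroup it belongs to $[G,G]$ and therefore to $N$. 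Consequently each conjugate is a product of an element in $N$ (the generator itself, which is unchanged up to a scalar) with an element in $N$ (the commutator scalar), so it lies in $N$. The purely translational and purely modulational conjugations $T_{m}T_{k}T_{m}^{-1}=T_{k}$ and $M_{n}M_{l}M_{n}^{-1}=M_{l}$ are immediate.

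The only potential obstacle is bookkeeping: one has to be careful that $T_{k}$ with $k\in B^{-tr}\mathbb{Z}^{d}\cap\mathbb{Z}^{d}$, after conjugation by an $M_{n}$ with $n\in B\mathbb{Z}^{d}$, retains $T_{k}$ as a factor (so that membership in $N$ is preserved), and that the accompanying scalar is genuinely a commutator in $G$. Since $B^{-tr}\mathbb{Z}^{d}\cap\mathbb{Z}^{d}\subset\mathbb{Z}^{d}$, both requirements are automatic, and no rationality hypothesis stronger than $B\in GL(d,\mathbb{Q})$ is needed.
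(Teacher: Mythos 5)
Your proposal is correct and follows essentially the same route as the paper: abelianness from the trivial mixed commutators $\langle Bl_{0},B^{-tr}k_{0}\rangle\in\mathbb{Z}$ (the content of Lemma \ref{abelian}) together with centrality of $[G,G]$, and normality by conjugating generators of $N$ by generators of $G$ and absorbing the resulting scalar phases into $[G,G]\subset N$. The only cosmetic difference is that the paper conjugates a general element $T_{k}M_{l}\tau$ and notes the modulation case produces no scalar at all (since $k\in B^{-tr}\mathbb{Z}^{d}$ makes the pairing integral), while you observe the slightly more robust fact that any such scalar lies in $[G,G]$ regardless.
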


\begin{proof}
From Lemma \ref{abelian}, we have already seen that $T_{k}$ commutes with
$M_{l}$ for arbitrary $k\in B^{-tr}%
\mathbb{Z}
^{d}\cap%
\mathbb{Z}
^{d},l\in B%
\mathbb{Z}
^{d}.$ Since $\left[  G,G\right]  $ commutes with $T_{k}$ and $M_{l}$ for
$k\in B^{-tr}%
\mathbb{Z}
^{d}\cap%
\mathbb{Z}
^{d},l\in B%
\mathbb{Z}
^{d}$, then $N$ is abelian. For the second part of the proof, let $k\in
B^{-tr}%
\mathbb{Z}
^{d}\cap%
\mathbb{Z}
^{d},l\in B%
\mathbb{Z}
^{d}$ and $s\in%
\mathbb{Z}
^{d}.$ First, we compute the conjugation action of the translation operator on
an arbitrary element of $N.$ Let $s\in%
\mathbb{Z}
^{d}.$ Then $
T_{s}\left(  T_{k}M_{l}\tau\right)  T_{s}^{-1}=\tau e^{-2\pi i\left\langle
l,s\right\rangle }T_{k}M_{l}.$ Next, we compute the conjugation action of the modulation operator on an
arbitrary element of $N$ as follows: $
M_{s}\left(  T_{k}M_{l}\tau\right)  M_{s}^{-1}=\tau T_{k}M_{l}.$ Clearly, $GNG^{-1}\subset N.$ Thus, $N$ is a normal subgroup of $G.$
\end{proof}

\begin{lemma}\label{finite}
Assuming that $B\ $is in $GL\left(  d,%
\mathbb{Q}
\right)  -GL\left(  d,%
\mathbb{Z}
\right)  ,$ then the following holds.
\begin{enumerate}
\item The quotient group $
\frac{%
\mathbb{Z}
^{d}}{B^{-tr}%
\mathbb{Z}
^{d}\cap%
\mathbb{Z}
^{d}}$ is isomorphic to a finite abelian group.
\item The group $G/N$ is a finite group isomorphic to  $
\frac{%
\mathbb{Z}
^{d}}{B^{-tr}%
\mathbb{Z}
^{d}\cap%
\mathbb{Z}
^{d}}.$
\end{enumerate}
\end{lemma}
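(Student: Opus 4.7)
The plan is to handle the two parts separately, using the previous corollary for part (a) and an explicit quotient map for part (b).

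For part (a), the plan is to invoke the corollary just established: since $B \in GL(d,\mathbb{Q}) - GL(d,\mathbb{Z})$, the intersection $B^{-tr}\mathbb{Z}^d \cap \mathbb{Z}^d$ is a full-rank sublattice of $\mathbb{Z}^d$. Writing $B^{-tr}\mathbb{Z}^d \cap \mathbb{Z}^d = A\mathbb{Z}^d$ for some nonsingular integral matrix $A$, the quotient $\mathbb{Z}^d/A\mathbb{Z}^d$ is a finitely generated abelian group of order $|\det A|$, hence is finite abelian. This step is routine; no obstacle.

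For part (b), the strategy is to first establish a normal form for elements of $G$, and then construct the isomorphism explicitly. The first step is to observe that the commutation relation $T_k M_l = e^{-2\pi i \langle l,k\rangle} M_l T_k$, combined with the fact that the scalars land in the central subgroup $[G,G]$, implies that every element $g \in G$ admits a presentation of the form
\[
g = \tau \, T_k \, M_l, \qquad \tau \in [G,G],\ k \in \mathbb{Z}^d,\ l \in B\mathbb{Z}^d.
\]
The next step is to define a map
\[
\pi : G \longrightarrow \mathbb{Z}^d / \bigl(B^{-tr}\mathbb{Z}^d \cap \mathbb{Z}^d\bigr), \qquad \pi(\tau\, T_k\, M_l) = k + \bigl(B^{-tr}\mathbb{Z}^d \cap \mathbb{Z}^d\bigr),
\]
and verify that it is a well-defined surjective group homomorphism. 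Well-definedness of the $k$-coordinate follows because the commutator subgroup lies in the kernel of the projection $G \to \mathbb{Z}^d \times B\mathbb{Z}^d$; the homomorphism property follows from the normal form calculation
\[
(\tau_1 T_{k_1} M_{l_1})(\tau_2 T_{k_2} M_{l_2}) = \tau_3 \, T_{k_1+k_2} \, M_{l_1+l_2}
\]
for some $\tau_3 \in [G,G]$, again because the scalars produced by commutation are central. Surjectivity is immediate since every $k \in \mathbb{Z}^d$ is realized by $T_k$.

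The last step is to identify $\ker \pi$ with $N$. By the normal form, $\tau T_k M_l \in \ker \pi$ exactly when $k \in B^{-tr}\mathbb{Z}^d \cap \mathbb{Z}^d$, which is precisely the defining condition for membership in $N$ (together with the unrestricted choices $\tau \in [G,G]$ and $l \in B\mathbb{Z}^d$ already built into $N$). Therefore the first isomorphism theorem gives $G/N \cong \mathbb{Z}^d/(B^{-tr}\mathbb{Z}^d \cap \mathbb{Z}^d)$, and this group is finite by part (a).

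The main obstacle is establishing the normal form cleanly and verifying that $\pi$ is a homomorphism; both require careful bookkeeping of the central scalar factors arising from the Heisenberg commutation relation. Once one observes that these scalars are all absorbed into $[G,G]$ and thus do not contribute to the $k$-coordinate modulo $B^{-tr}\mathbb{Z}^d \cap \mathbb{Z}^d$, the argument becomes essentially formal.
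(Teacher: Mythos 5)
Your proof is correct and follows essentially the same route as the paper: part (a) is identical (full-rank lattice $A\mathbb{Z}^d$, index $|\det A|$), and for part (b) the paper simply asserts that the cosets of $N$ in $G$ are represented by $\{1, T_{k_1},\ldots,T_{k_m}\}$ with the $k_i$ running over coset representatives of $\mathbb{Z}^d/(B^{-tr}\mathbb{Z}^d\cap\mathbb{Z}^d)$, which is exactly the content your explicit homomorphism $\pi$ and the first isomorphism theorem make rigorous. Your version supplies the normal-form bookkeeping that the paper leaves implicit (and, as a bonus, re-derives the normality of $N$ as a kernel), but there is no substantive difference in method.
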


\begin{proof}
For the first part of the proof, since $B^{-tr}%
\mathbb{Z}
^{d}\cap%
\mathbb{Z}
^{d}$ is a full-rank lattice, there exists a non-singular matrix $A$ such that
$B^{-tr}%
\mathbb{Z}
^{d}\cap%
\mathbb{Z}
^{d}=A%
\mathbb{Z}
^{d}.$ Thus, referring to the discussion \cite{Unified} (page $95)$, the index
of $B^{-tr}%
\mathbb{Z}
^{d}\cap%
\mathbb{Z}
^{d}$ in $%
\mathbb{Z}
^{d}$ is $\left(
\mathbb{Z}
^{d}:B^{-tr}%
\mathbb{Z}
^{d}\cap%
\mathbb{Z}
^{d}\right)  =\left\vert \det A\right\vert .$ As a result,
\[
\frac{%
\mathbb{Z}
^{d}}{B^{-tr}%
\mathbb{Z}
^{d}\cap%
\mathbb{Z}
^{d}}%
\]
is a finite abelian group.  For the second part of lemma,  there exist $k_{1},k_{2},\cdots,k_{m}\in%
\mathbb{Z}
^{d}$ such that
\begin{align*}
\frac{%
\mathbb{Z}
^{d}}{B^{-tr}%
\mathbb{Z}
^{d}\cap%
\mathbb{Z}
^{d}}  &  =\left\{  \left(  B^{-tr}%
\mathbb{Z}
^{d}\cap%
\mathbb{Z}
^{d}\right)  ,k_{1}+\left(  B^{-tr}%
\mathbb{Z}
^{d}\cap%
\mathbb{Z}
^{d}\right)  ,\cdots,k_{m}+\left(  B^{-tr}%
\mathbb{Z}
^{d}\cap%
\mathbb{Z}
^{d}\right)  \right\} \\
&  \cong\left\{  1,T_{k_{1}}\cdots,T_{k_{m}}\right\}.
\end{align*}
Also,
\begin{align*}
G/N  &  \cong\left\{  1,T_{k_{1}}\cdots,T_{k_{m}}\right\} \\
&  \cong\frac{%
\mathbb{Z}
^{d}}{B^{-tr}%
\mathbb{Z}
^{d}\cap%
\mathbb{Z}
^{d}}%
\end{align*}
and this completes the proof.
\end{proof}

\section{The Plancherel Measure and Application: The Integer Case}

If $B\in GL\left(  d,%
\mathbb{Z}
\right)  $ then $G$ is commutative and isomorphic to $%
\mathbb{Z}
^{d}\times B%
\mathbb{Z}
^{d}$. In this case, the unitary dual, and the Plancherel measure of $
G=\left\langle T_{k},M_{l}:k\in%
\mathbb{Z}
^{d},l\in B%
\mathbb{Z}
^{d}\right\rangle$ are well-understood through the Pontrjagin duality. Let $L$ be the left
regular representation of $G.$ The unitary dual of $G$ is isomorphic to
\[
\frac{\widehat{\mathbb{%
\mathbb{R}
}^{d}}}{%
\mathbb{Z}
^{d}}\times\frac{\widehat{\mathbb{%
\mathbb{R}
}^{d}}}{B^{-tr}%
\mathbb{Z}
^{d}}%
\]
and the Plancherel measure is up to multiplication by a constant equal to
\[
\frac{dxdy}{\left\vert \det\left(  B^{-tr}\right)  \right\vert }%
\]
which is supported on a measurable set $\Lambda\subset%
\mathbb{R}
^{2d}$ parametrizing the group $\frac{\widehat{\mathbb{%
\mathbb{R}
}^{d}}}{%
\mathbb{Z}
^{d}}\times\frac{\widehat{\mathbb{%
\mathbb{R}
}^{d}}}{B^{-tr}%
\mathbb{Z}
^{d}}.$ More precisely, the collection of sets
\[
\left\{  \Lambda+j:j\in%
\mathbb{Z}
^{d}\times B^{-tr}%
\mathbb{Z}
^{d}\right\}
\]
forms a measurable partition for $\mathbb{%
\mathbb{R}
}^{2d}$ and $\Lambda$ is called the \textbf{spectrum} of the left regular
representation of $\pi.$ It is worth noticing that there is no canonical way
to choose $\Lambda.$ Moreover, via the Plancherel transform, the left regular
representation of $%
\mathbb{Z}
^{d}\times B%
\mathbb{Z}
^{d}$ is decomposed into a direct integral decomposition of characters
$\int_{\Lambda}^{\oplus}\chi_{\left(  x,y\right)  }\left\vert \det
B\right\vert dxdy$ acting in $\int_{\Lambda}^{\oplus}%
\mathbb{C}
$ $\left\vert \det B\right\vert dxdy\cong L^{2}\left(  \Lambda\right)  .$

Now, we will discuss some application of the Plancherel theory of $G$ to Gabor
theory. Let $B\in GL\left(  d,%
\mathbb{Z}
\right)  ,$ and let $\widetilde{G}=%
\mathbb{Z}
^{d}\times B%
\mathbb{Z}
^{d}$. There is a representation of $\widetilde{G}$ where $\pi:\widetilde
{G}\rightarrow U\left(  L^{2}\left(
\mathbb{R}
^{d}\right)  \right)  ,$ such that $G$ is the image of $\widetilde{G}$ via the
representation $\pi.$ The representation $\pi$ is called a \textbf{Gabor
representation} of $\widetilde{G}.$ Since $G$ is abelian, once a choice for a
transversal of $\frac{\widehat{\mathbb{%
\mathbb{R}
}^{d}}}{%
\mathbb{Z}
^{d}}\times\frac{\widehat{\mathbb{%
\mathbb{R}
}^{d}}}{B^{-tr}%
\mathbb{Z}
^{d}}$ is made, there is a decomposition of $\pi$ into irreducible
representations of the group $\widetilde{G}$
\begin{equation}
\int_{\Lambda}^{\oplus}\chi_{\varsigma}\otimes\mathbf{1}_{%
\mathbb{C}
^{n\left(  \varsigma\right)  }}d\mu\left(  \varsigma\right)  \label{decomp}%
\end{equation}
acting in $\int_{\Lambda}^{\oplus}%
\mathbb{C}
\otimes%
\mathbb{C}
^{n\left(  \varsigma\right)  }d\mu\left(  \varsigma\right)  .$ The function
$n:\Lambda\rightarrow%
\mathbb{N}
\cup\left\{  0\right\}  $ is the \textbf{multiplicity function} of the
irreducible representations appearing in the decomposition of $\pi.$

Let us define $W_{f}:L^{2}\left(
\mathbb{R}
^{d}\right)  \rightarrow l^{2}\left(  \widetilde{G}\right)  $ where
\[
W_{f}h\left(  k,l\right)  =\left\langle h,T_{k}M_{l}f\right\rangle .
\]
We recall that $\pi$ is \textbf{admissible} if and only if $W_{f}$ defines an isometry on
$L^{2}\left(
\mathbb{R}
^{d}\right).$

\begin{lemma}
Let $B\in GL\left(  d,%
\mathbb{Z}
\right)  .$ $\pi$ is admissible if and only if
\[%
{\displaystyle\sum\limits_{T_{k}M_{l}\in G}}
\left\vert \left\langle h,T_{k}M_{l}f\right\rangle \right\vert ^{2}=\left\Vert
h\right\Vert ^{2}
\]
for all $h\in L^2\left(\mathbb{R}^d\right).$ That is $f$ is a \textbf{Gabor Parseval frame}.
\end{lemma}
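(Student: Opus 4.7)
The plan is to unpack the definition of admissibility given earlier in the excerpt. Recall that $\pi$ is admissible exactly when there is a vector $f$ for which the coefficient operator $W_f : L^2(\mathbb{R}^d) \to \ell^2(\widetilde{G})$, $h \mapsto \langle h, \pi(k,l) f \rangle$, is an isometry. Since $\pi(k,l) = T_k M_l$ for the Gabor representation of $\widetilde{G} = \mathbb{Z}^d \times B\mathbb{Z}^d$, the map $W_f$ in the lemma is precisely the operator that appears in the admissibility criterion.

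First, I would record the identity
\[
\|W_f h\|_{\ell^2(\widetilde{G})}^{2} = \sum_{(k,l) \in \widetilde{G}} |W_f h(k,l)|^{2} = \sum_{T_k M_l \in G} |\langle h, T_k M_l f \rangle|^{2},
\]
which is just the definition of the $\ell^2$ norm on the countable set $\widetilde{G}$ combined with the definition of $W_f$. Here I use that the correspondence $(k,l) \mapsto T_k M_l$ is a bijection between $\widetilde{G}$ and $G$, which is guaranteed by the hypothesis $B \in GL(d,\mathbb{Z})$ and part (a) of Lemma \ref{GG} (the Gabor representation $\pi$ of $\widetilde{G}$ is faithful in this case, so no two pairs $(k,l)$ collapse to the same operator).

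From this identity the equivalence is immediate. For the forward direction, if $\pi$ is admissible with admissible vector $f$, then $W_f$ is an isometry, hence $\|W_f h\|^{2} = \|h\|^{2}$ for every $h \in L^2(\mathbb{R}^d)$, which is exactly the claimed Parseval identity. For the converse, if the Parseval identity holds for every $h$, then $\|W_f h\| = \|h\|$ for all $h$, so $W_f$ is an isometry and $\pi$ is admissible by definition.

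Because the proof is a direct translation between two equivalent formulations of the same norm equality, there is no real obstacle; the only point requiring a quick justification is the bijection between $\widetilde{G}$ and $G$ that allows rewriting the sum over $\widetilde{G}$ as a sum over $G$, and this uses the integrality of $B$.
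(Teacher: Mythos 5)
Your proof is correct and matches the paper's treatment: the paper states this lemma without proof, treating it as an immediate unpacking of the definition of admissibility via the operator $W_{f}$ (defined just before the lemma), which is exactly what you do. Your additional remark that integrality of $B$ makes the correspondence $\left(  k,l\right)  \mapsto T_{k}M_{l}$ a bijection between $\widetilde{G}$ and $G$ (so the sum over $G$ is the $\ell^{2}\left(  \widetilde{G}\right)  $ norm of $W_{f}h$) is a worthwhile detail the paper leaves implicit.
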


We recall that the \textbf{Zak transform} is a unitary operator
\[
Z:L^{2}\left(
\mathbb{R}
^{d}\right)  \rightarrow L^{2}\left(  \left[  0,1\right)  ^{d}\times\left[
0,1\right)  ^{d}\right)  \cong\int_{\left[  0,1\right)  ^{d}\times\left[
0,1\right)  ^{d}}^{\oplus}%
\mathbb{C}
dxdy
\]
where
\[
Zf\left(  x,y\right)  =%
{\displaystyle\sum\limits_{m\in\mathbb{Z}^{d}}}
f\left(  x+m\right)  \exp\left(  2\pi i\left\langle m,y\right\rangle \right)
.
\]
It is easy to see that if $B\in GL\left(  d,%
\mathbb{Z}
\right)  $ then
\[
Z\left(  T_{k}M_{l}f\right)  \left(  x,y\right)  =\exp\left(  -2\pi
i\left\langle k,y\right\rangle \right)  \exp\left(  2\pi i\left\langle
l,x\right\rangle \right)  Zf\left(  x,y\right)  .
\]
Thus, the Zak transform intertwines the Gabor representation with the
representation
\begin{equation}
\int_{\left[  0,1\right)  ^{d}\times\left[  0,1\right)  ^{d}}^{\oplus}%
\chi_{\left(  x,y\right)  }dxdy. \label{zaky}%
\end{equation}
Now, we would like to compare the representations given in (\ref{decomp}) and
(\ref{zaky}).

\begin{proposition}
\label{gr}If $B\in GL\left(  d,%
\mathbb{Z}
\right)  $ and $\left\vert \det B\right\vert \neq1$ then the Gabor
representation is not admissible. That is, there is no Parseval frame of the
type $\pi\left(  \widetilde{G}\right)  f.$ Moreover, the Gabor representation
is unitarily equivalent to the direct integral
\[
\int_{\Lambda}^{\oplus}\chi_{\varsigma}\otimes\mathbf{1}_{%
\mathbb{C}
^{\left\vert \det B\right\vert }}d\mu\left(  \varsigma\right)  .
\]

\end{proposition}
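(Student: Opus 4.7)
The plan is to use the Zak transform intertwining from (\ref{zaky}) to identify the Gabor representation $\pi$ with $\int_{[0,1)^{d}\times[0,1)^{d}}^{\oplus}\chi_{(x,y)}\,dx\,dy$, and then to re-express this direct integral over the smaller fundamental domain $\Lambda$ of the dual lattice $\mathbb{Z}^{d}\times B^{-tr}\mathbb{Z}^{d}$, acquiring the multiplicity $|\det B|$ in the process.

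First, since $B\in GL(d,\mathbb{Z})$ and $|\det B|\neq 1$, we have $|\det B|\geq 2$ and $B^{tr}\mathbb{Z}^{d}\subsetneq \mathbb{Z}^{d}$, so $\mathbb{Z}^{d}\subsetneq B^{-tr}\mathbb{Z}^{d}$ with index exactly $|\det B|$. Consequently $\mathbb{Z}^{2d}$ sits inside $\mathbb{Z}^{d}\times B^{-tr}\mathbb{Z}^{d}$ as a sublattice of index $|\det B|$. Fixing coset representatives $\gamma_{1},\dots,\gamma_{|\det B|}$ for this finite quotient, I can arrange (modulo Lebesgue null sets) the disjoint decomposition
\begin{equation*}
[0,1)^{d}\times[0,1)^{d}\;=\;\bigsqcup_{j=1}^{|\det B|}\bigl(\Lambda+\gamma_{j}\bigr).
\end{equation*}

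Next, the character $\chi_{(x,y)}$ of $\mathbb{R}^{2d}$, when restricted to $\widetilde{G}=\mathbb{Z}^{d}\times B\mathbb{Z}^{d}$, depends on $(x,y)$ only through its class modulo the annihilator lattice $\mathbb{Z}^{d}\times B^{-tr}\mathbb{Z}^{d}$. Combined with the translation invariance of Lebesgue measure, this gives for each $j$ a unitary equivalence
\begin{equation*}
\int_{\Lambda+\gamma_{j}}^{\oplus}\chi_{(x,y)}\,dx\,dy\;\cong\;\int_{\Lambda}^{\oplus}\chi_{(x,y)}\,dx\,dy.
\end{equation*}
Summing over $j$, appealing to (\ref{zaky}), and bundling the $|\det B|$ isomorphic copies into a trivial multiplicity space $\mathbb{C}^{|\det B|}$ yields the asserted decomposition
\begin{equation*}
\pi\;\cong\;\int_{\Lambda}^{\oplus}\chi_{(x,y)}\otimes \mathbf{1}_{\mathbb{C}^{|\det B|}}\,dx\,dy.
\end{equation*}

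For the non-admissibility, I invoke Lemma~\ref{density} directly (applied with the matrix $A$ there taken as the identity): a Gabor Parseval frame $\mathcal{G}(f,\mathbb{Z}^{d}\times B\mathbb{Z}^{d})$ exists if and only if $|\det B|\leq 1$, which is excluded by hypothesis. Equivalently, admissibility of $\pi$ would embed it as a subrepresentation of the left regular representation of the abelian group $\widetilde{G}$, which by Pontrjagin duality has simple spectrum; but the decomposition just obtained exhibits constant multiplicity $|\det B|\geq 2$, a contradiction. The main delicate point will be the measure-theoretic bookkeeping: choosing $\Lambda$ so that the translates $\Lambda+\gamma_{j}$ tile $[0,1)^{2d}$ up to a null set, and verifying that the resulting measurable fields of Hilbert spaces reassemble into the tensored-with-$\mathbb{C}^{|\det B|}$ form appearing in the statement.
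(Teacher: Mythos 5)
Your proposal is correct and is essentially the paper's own argument: the Zak-transform identification of $\pi$ with $\int_{[0,1)^{2d}}^{\oplus}\chi_{(x,y)}\,dx\,dy$, a partition of $[0,1)^{2d}$ into $\left\vert \det B\right\vert$ fundamental domains of $\mathbb{Z}^{d}\times B^{-tr}\mathbb{Z}^{d}$ (the paper writes these abstractly as sets $\Lambda^{k}$, while you realize them as translates $\Lambda+\gamma_{j}$ by coset representatives of $\mathbb{Z}^{2d}$ in $\mathbb{Z}^{d}\times B^{-tr}\mathbb{Z}^{d}$, which amounts to the same thing), and non-admissibility by noting that a representation with uniform multiplicity $\left\vert \det B\right\vert\geq 2$ cannot be a subrepresentation of the multiplicity-free regular representation of the abelian group $\widetilde{G}$. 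Your alternative appeal to Lemma~\ref{density} for non-admissibility is also valid, though that is precisely the external Han--Wang density result the paper's representation-theoretic proof is designed to bypass.
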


\begin{proof}
First, let us notice that if $B\in GL\left(  d,%
\mathbb{Z}
\right)  $ then $\left\vert \det B^{-tr}\right\vert \leq1.$ As a result, there
is a measurable set $\Lambda\subset\mathbb{%
\mathbb{R}
}^{2d}$ tiling $\mathbb{%
\mathbb{R}
}^{2d}$ by the lattice $%
\mathbb{Z}
^{d}\times B^{-tr}%
\mathbb{Z}
^{d}$ such that $\Lambda$ is contained in $\left[
0,1\right)  ^{d}\times\left[  0,1\right)  ^{d}$ (see \cite{Han}). Thus if
$\left\vert \det B\right\vert \neq1,$ the representation $\int_{\left[
0,1\right)  ^{d}}^{\oplus}\int_{\left[  0,1\right)  ^{d}}^{\oplus}%
\chi_{\left(  x,y\right)  }dxdy$ cannot be contained in $\int_{\Lambda
}^{\oplus}\chi_{\left(  x,y\right)  }\left\vert \det\left(  B^{tr}\right)
\right\vert dxdy.$ Picking $\Lambda=\left[  0,1\right)  ^{d}\times
E\subset\mathbb{%
\mathbb{R}
}^{2d}$ such that $E\subseteq\left[  0,1\right)  ^{d},$ we obtain
\begin{equation}
\pi\cong\int_{\Lambda}^{\oplus}\chi_{\varsigma}\otimes\mathbf{1}_{%
\mathbb{C}
^{n\left(  \varsigma\right)  }}d\mu\left(  \varsigma\right)
\label{decomposition}%
\end{equation}
We now claim that the multiplicity function is given by
\[
n\left(  \varsigma\right)  =\mathrm{\#}\left(  \left\{  j\in B^{-tr}%
\mathbb{Z}
^{d}:\left\{  \varsigma+j\right\}  \cap\left[  0,1\right)  ^{d}\neq
\emptyset\right\}  \right)  =\left\vert \det B\right\vert .
\]
To show that the above holds, we partition $\left[  0,1\right)  ^{d}%
\times\left[  0,1\right)  ^{d}$ into $\left\vert \det B\right\vert $ many
subsets $\Lambda^{k}$ such that each set $\Lambda^{k}$ is a fundamental domain
for $%
\mathbb{Z}
^{d}\times B^{-tr}%
\mathbb{Z}
^{d}.$ Writing
\[
\left[  0,1\right)  ^{d}\times\left[  0,1\right)  ^{d}=%
{\displaystyle\bigcup\limits_{k=1}^{\left\vert \det B\right\vert }}
\Lambda^{k},
\]
we obtain
\begin{align*}
\pi &  \cong\int_{\left[  0,1\right)  ^{d}\times\left[  0,1\right)  ^{d}%
}^{\oplus}\chi_{\left(  x,y\right)  }dxdy\\
&  \cong\int_{\Lambda
}^{\oplus}%
{\displaystyle\bigoplus\limits_{k=1}^{\left\vert \det B\right\vert }}
\chi_{\left(  x,y\right)  }dxdy\\
&  \cong\int_{\Lambda}^{\oplus}\chi_{\varsigma}\otimes\mathbf{1}_{%
\mathbb{C}
^{\left\vert \det B\right\vert }}d\mu\left(  \varsigma\right)  .
\end{align*}

\end{proof}

\begin{example}
Let $
G=\left\langle T_{k},M_{l}\text{ }|\text{ }k\in%
\mathbb{Z}
,l\in3%
\mathbb{Z}
\right\rangle .$ The spectrum of the left regular representation of $G$ is given by $
\Lambda=\left[  0,1\right)  \times\left[  0,1/3\right)$ and $\pi\cong\int_{\Lambda}^{\oplus}\chi_{\varsigma}\otimes\mathbf{1}_{%
\mathbb{C}
^{3}}d\mu\left(  \varsigma\right)  $. Thus, $\pi$ is \textbf{not} an
admissible representation since $\pi\cong L\oplus L\oplus L.$
\end{example}

\begin{example}
Let $
G=\left\langle T_{k},M_{l}\text{ }|\text{ }k\in%
\mathbb{Z}
^{2},l\in B%
\mathbb{Z}
^{2}\right\rangle$ where
\[
B=\left[
\begin{array}
[c]{cc}%
3 & 1\\
1 & 2
\end{array}
\right]  .
\]
The spectrum of the left regular representation is%
\[
\left[  0,1\right)  ^{2}\times\left[
\begin{array}
[c]{cc}%
\frac{2}{5} & -\frac{1}{5}\\
-\frac{1}{5} & \frac{3}{5}%
\end{array}
\right]  \left[  0,1\right)  ^{2}.
\]
In the graph below we illustrate the idea that there exists a collection of
sets $
\left\{  \Lambda^{k}\right\}  _{k=1}^{5}$ such that each set $\Lambda^{k}$ is a fundamental domain of $\left[
0,1\right)  ^{2}\times B^{-tr}\left[  0,1\right)  ^{2}$ and the spectrum of
$\pi$ is given as follows
\[
\left[  0,1\right)  ^{2}\times\left[  0,1\right)  ^{2}=%
{\displaystyle\bigcup\limits_{k=1}^{5}}
\Lambda^{k}.
\]
Thus, $\pi\cong L\oplus L\oplus L\oplus L\oplus L.$ As a result, the Gabor
representation $\pi$ is not admissible.
\[%
{\parbox[b]{3in}
{\begin{center}
\includegraphics[
height=2.9992in,
width=3in
]%
{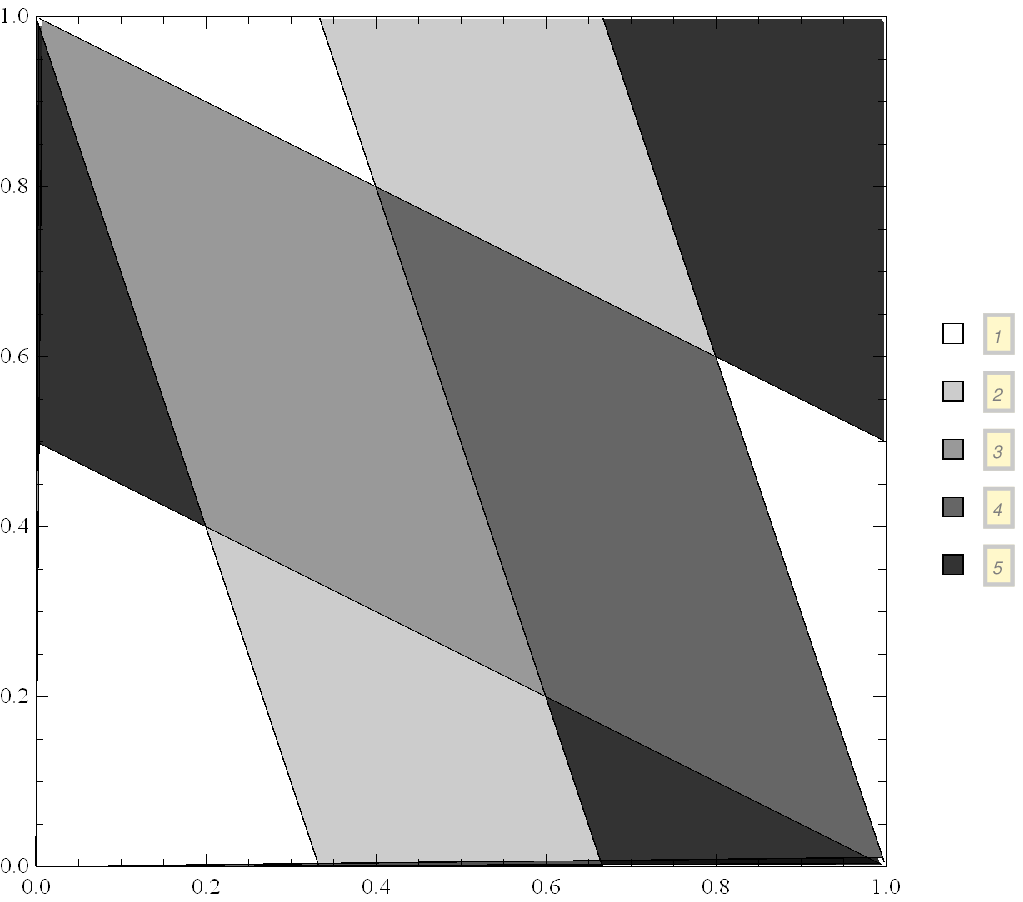}%
\\
A projection of the spectrum of $\pi$ in $\mathbb{R}^2.$
\end{center}}}
\]

\end{example}

\begin{remark}
Let $\mathcal{K}$ be a $\pi$-invariant closed subspace of $L^{2}\left(
\mathbb{R}
^{d}\right)  $ ($\mathcal{K}$ is a shift-modulation invariant space
\cite{Bownik}). There exists a unitary operator
\[
J:\mathcal{K}\rightarrow\int_{\left[  0,1\right)  ^{d}\times B^{-tr}\left[
0,1\right)  ^{d}}^{\oplus}%
\mathbb{C}
\otimes%
\mathbb{C}
^{\overline{n}\left(  \varsigma\right)  }d\mu\left(  \varsigma\right)
\]
intertwining the representations $\pi|\mathcal{K}$ with $
\int_{\Lambda}^{\oplus}\chi_{\varsigma}\otimes\mathbf{1}_{%
\mathbb{C}
^{\overline{n}\left(  \varsigma\right)  }}d\mu\left(  \varsigma\right)$ and $\overline{n}\left(  \varsigma\right)  \leq\left\vert \det B\right\vert $
a.e. As a result, we have a general characterization of shift-modulation
invariant spaces in the specific case where $B$ has integral entries.

\begin{proof}
The proof follows from the fact that $\left(  \pi|\mathcal{K}\text{,}%
\mathcal{K}\right)  $ is a subrepresentation of the Gabor representation of
$\widetilde{G}.$
\end{proof}
\end{remark}

\begin{definition}
Two unitary representations $\left(  \pi_{1},H_{1}\right)  ,\left(  \pi
_{2},H_{2}\right)  $ of a group $X$ are quasi-equivalent if there exist
unitarily-equivalent representations $\rho_{1},\rho_{2}$ such that $\rho_{k}$
is a multiple of $\pi_{k}$ for $k=1,2.$
\end{definition}

\begin{remark}\label{rem}
If $B\in GL\left(  d,%
\mathbb{Z}
\right)  $ then the Gabor representation $\pi$ is quasi-equivalent to the left
regular representation of $\widetilde{G}.$
\end{remark}

The proof of Remark \ref{rem} is a direct application of Proposition \ref{gr}.

\begin{remark}
In the case where $B$ is the identity matrix, $\pi$ is an admissible
representation. In fact, a well-known admissible vector is the indicator
function of the cube $\left[  0,1\right)  ^{d}.$
\end{remark}

\section{The Plancherel Measure and Application: The Rational Case}

In this section, we assume that the given matrix $B$ has at least one rational
non-integral entry. We recall from Lemma \ref{GG} that
\[
G=\left\langle T_{k},M_{l}:k\in%
\mathbb{Z}
^{d},l\in B%
\mathbb{Z}
^{d}\right\rangle
\]
is not commutative but is a discrete type I group. Thus, its unitary dual
exists, and its Plancherel measure is computable. Using \textbf{Mackey's
Machine} and results developed by Ronald Lispman and Adam Kleppner in
\cite{Lipsman1}, we will describe the unitary dual of the group $G$, and a
formula for the Plancherel measure. We recall that if $B\ $is in $GL\left(  d,%
\mathbb{Q}
\right)  -GL\left(  d,%
\mathbb{Z}
\right)  $ then $G$ contains a normal subgroup
\[
N=\left\langle T_{k},M_{l},\tau\text{ }|\text{ }k\in B^{-tr}%
\mathbb{Z}
^{d}\cap%
\mathbb{Z}
^{d},l\in B%
\mathbb{Z}
^{d},\tau\in\left[  G,G\right]  \right\rangle
\]
which is isomorphic with a direct product of abelian groups. Since $N$ is
isomorphic to
\[
\left(  B^{-tr}%
\mathbb{Z}
^{d}\cap%
\mathbb{Z}
^{d}\right)  \times B%
\mathbb{Z}
^{d}\times%
\mathbb{Z}
_{m},
\]
its unitary dual is a group of characters. The underlying set for the group
$G$ is $%
\mathbb{Z}
^{d}\times B%
\mathbb{Z}
^{d}\times%
\mathbb{Z}
_{m}$, and we define the representation $\pi$ of the group $G$ as follows:
\begin{equation}
\pi\left(  k,l,j\right)  =T_{k}M_{l}e^{\frac{2\pi ij}{m}} \label{repPi}%
\end{equation}

\begin{lemma}
Let $B\in GL\left(  d,%
\mathbb{Q}
\right)  .$ If $\pi$ is admissible then there exists a function $\phi\in L^{2}\left(
\mathbb{R}
^{d}\right)  $ such that given $h\in L^{2}\left(
\mathbb{R}
^{d}\right)  ,$
\[%
{\displaystyle\sum\limits_{T_{k}M_{l}\in G}}
\left\vert \left\langle h,T_{k}M_{l}\phi  \right\rangle \right\vert ^{2}=\left\Vert
h\right\Vert ^{2}.
\]
That is $\pi\left(  G\right) \phi$
is \textbf{Parseval Gabor frame}.
\end{lemma}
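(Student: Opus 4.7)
The plan is to unravel the admissibility condition for $\pi$ in terms of its explicit formula $\pi(k,l,j) = T_k M_l e^{2\pi i j/m}$ and then exploit the fact that the central phase $e^{2\pi i j/m}$ contributes only as a scalar inside the inner product. Concretely, admissibility of $\pi$ produces a vector $\phi_0 \in L^2(\mathbb{R}^d)$ such that the wavelet map
\[
W_{\phi_0} : L^2(\mathbb{R}^d) \to \ell^2(G), \qquad W_{\phi_0}h(k,l,j) = \langle h, \pi(k,l,j)\phi_0\rangle,
\]
is an isometry. This is the starting point.

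Next, I would compute that $\pi(k,l,j)\phi_0 = e^{2\pi i j/m}\, T_k M_l \phi_0$, so that
\[
|W_{\phi_0}h(k,l,j)|^2 = |\langle h, T_k M_l \phi_0\rangle|^2,
\]
a quantity that does not depend on $j$. Summing the isometry identity $\|h\|^2 = \sum_{(k,l,j)} |W_{\phi_0}h(k,l,j)|^2$ first over $j \in \mathbb{Z}_m$ produces a factor of $m$, giving
\[
\|h\|^2 = m \sum_{T_k M_l \in G} |\langle h, T_k M_l \phi_0\rangle|^2.
\]

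Finally, to absorb the constant $m$ I would set $\phi := \sqrt{m}\,\phi_0$; bilinearity of the inner product makes $|\langle h, T_k M_l \phi\rangle|^2 = m\,|\langle h, T_k M_l \phi_0\rangle|^2$, and the claimed Parseval identity follows. The only conceptual step to watch is that the indexing set $\{T_k M_l : k \in \mathbb{Z}^d,\, l \in B\mathbb{Z}^d\}$ consists of genuinely distinct elements of $G$ (since differing $(k,l)$ give operators differing by something other than a central phase), so the summation on the left of the Parseval identity is unambiguous.

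There is no real obstacle here: the lemma is essentially a bookkeeping statement unpacking the definition of admissibility for the specific representation $\pi$ together with the observation that the $\mathbb{Z}_m$ factor enters only through a unimodular scalar. The only mild care required is the constant $m = |[G,G]|$, which is handled by rescaling the admissible vector.
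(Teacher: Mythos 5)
Your proposal is correct and follows essentially the same route as the paper's own proof: take the admissible vector, note that the central factor $e^{2\pi i j/m}$ contributes only a unimodular scalar so the sum over $[G,G]\cong\mathbb{Z}_{m}$ yields the constant $m=\#\left(\left[G,G\right]\right)$, and then absorb that constant by rescaling the admissible vector to $\phi=\sqrt{m}\,\phi_{0}$. The paper does exactly this, writing $\phi=\#\left(\left[G,G\right]\right)^{1/2}f$, so there is nothing to add.
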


\begin{proof}
Let $h\in L^{2}\left(
\mathbb{R}
^{d}\right)  .$ If $\pi$ is admissible, and if $f$ is an admissible vector,
\begin{align*}%
{\displaystyle\sum\limits_{T_{k}M_{l}e^{2\pi i\theta_{2}}\in G}}
\left\vert \left\langle h,T_{k}M_{l}e^{2\pi i\theta_{2}}f\right\rangle
\right\vert ^{2}  &  =%
{\displaystyle\sum\limits_{T_{k}M_{l}}}
{\displaystyle\sum\limits_{e^{2\pi i\theta_{2}}\in\left[  G,G\right]  }}
\left\vert \left\langle h,T_{k}M_{l}f\right\rangle \right\vert ^{2}\\
&  =%
{\displaystyle\sum\limits_{T_{k}M_{l}}}
\#\left(  \left[  G,G\right]  \right)  \left\vert \left\langle h,T_{k}%
M_{l}f\right\rangle \right\vert ^{2}\\
&  =%
{\displaystyle\sum\limits_{T_{k}M_{l}}}
\left\vert \left\langle h,T_{k}M_{l}\#\left(  \left[  G,G\right]  \right)
^{1/2}f\right\rangle \right\vert ^{2}\\
&  =\left\Vert h\right\Vert ^{2}.
\end{align*}
Thus, the statement of the lemma holds by replacing $\#\left(  \left[  G,G\right]  \right)
^{1/2}f$ with $\phi.$
\end{proof}

Using the procedure provided in Remark \ref{proc}, we construct an invertible
matrix $A$ with integral entries such that $B^{-tr}%
\mathbb{Z}
^{d}\cap%
\mathbb{Z}
^{d}=A%
\mathbb{Z}
^{d}.$ Thus, the unitary dual of $N$ is isomorphic to the commutative group
\[
\frac{\widehat{%
\mathbb{R}
^{d}}}{A^{-tr}%
\mathbb{Z}
^{d}}\times\frac{\widehat{%
\mathbb{R}
^{d}}}{B^{-tr}%
\mathbb{Z}
^{d}}\times\widehat{%
\mathbb{Z}
_{m}}.
\]
Letting $\Lambda_{1}\subset\widehat{%
\mathbb{R}
^{d}}$ be a measurable fundamental domain for $A^{-tr}%
\mathbb{Z}
^{d}$ and $\Lambda_{2}\subset\widehat{%
\mathbb{R}
^{d}}$ a measurable fundamental domain for $B^{-tr}%
\mathbb{Z}
^{d},$ the unitary dual of $N$ is parameterized by the set
\[
\left\{  \left(  \gamma_{1},\gamma_{2},\sigma\right)  :\gamma_{1}\in
\Lambda_{1},\gamma_{2}\in\Lambda_{2},\sigma\in\widehat{%
\mathbb{Z}
_{m}}\right\}  .
\]
Next, the action of a fixed character of $N$ is computed as follows.
\[
\chi_{\left(  \gamma_{1},\gamma_{2},\sigma\right)  }\left(  T_{k}M_{l}e^{2\pi
i\theta}\right)  =\exp\left[  2\pi i\left\langle \gamma_{1},k\right\rangle
\right]  \exp\left[  2\pi i\left\langle \gamma_{2},l\right\rangle \right]
\exp\left[  2\pi i\sigma\theta\right]  .
\]

We recall one important result due to Mackey which is also presented in
\cite{Lipsman2}. We will need this proposition to compute the unitary dual of
the group $G.$

\begin{proposition}
\label{Mackey}Let $N$ be a normal subgroup of $G.$ Assume that $N$ is type $I$
and is regularly embedded. Let $\pi$ be an arbitrary element of the unitary
dual of $N.$ $G$ acts on the unitary dual of $N$ as follows:%
\[
x\cdot\pi\left(  y\right)  =\pi\left(  x^{-1}yx\right)  ,x\in G,y\in N.
\]
Let $G_{\pi}$ be the stabilizer group of the $G$-action on $\pi.$%
\[
\widehat{G}=%
{\displaystyle\bigcup\limits_{\pi\in\widehat{N}/G}}
\left\{  \mathrm{Ind}_{G_{\pi}}^{G}\left(  \upsilon\right)  :\upsilon
\in\widehat{G_{\pi}}\text{ and }\upsilon|N\text{ is a multiple of }%
\pi\right\}  .
\]

\end{proposition}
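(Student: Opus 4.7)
The plan is to prove Proposition \ref{Mackey} by the classical Mackey analysis, viewing every irreducible representation of $G$ as being built from an orbit in $\widehat{N}$ together with an irreducible of the corresponding stabilizer. The two key tools are the direct integral decomposition of the restriction $\rho|_N$ for $\rho\in\widehat{G}$, and Mackey's imprimitivity theorem, which together set up a bijection between $\widehat{G}$ and the parameter set on the right-hand side.

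First I would fix $\rho\in\widehat{G}$ and analyze $\rho|_N$. Since $N$ is type I, the restriction decomposes as a direct integral $\rho|_N\cong\int^{\oplus}_{\widehat{N}}\tau\otimes\mathbf{1}_{\mathcal{H}_\tau}\,d\nu(\tau)$. The measure class $[\nu]$ on $\widehat{N}$ is $G$-quasi-invariant for the conjugation action. Because $N$ is regularly embedded, the orbit space $\widehat{N}/G$ is countably separated, so any quasi-invariant ergodic measure class is carried by a single orbit; thus $[\nu]$ is concentrated on the orbit $G\cdot\pi$ of some $\pi\in\widehat{N}$. This is the step where the ``regularly embedded'' hypothesis is essential — without it one can produce ergodic measures not concentrated on orbits, which is exactly how type I fails.

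Next, with an orbit $G\cdot\pi\cong G/G_\pi$ fixed, the restriction $\rho|_N$ is a transitive system of imprimitivity based on $G/G_\pi$. By Mackey's imprimitivity theorem, $\rho$ is unitarily equivalent to an induced representation $\mathrm{Ind}_{G_\pi}^G(\upsilon)$ for some representation $\upsilon$ of $G_\pi$ whose restriction $\upsilon|_N$ is a multiple of $\pi$. Conversely, for any such $\upsilon\in\widehat{G_\pi}$, I would verify via Mackey's irreducibility criterion that the induced representation $\mathrm{Ind}_{G_\pi}^G(\upsilon)$ is irreducible: the relevant double coset analysis degenerates to the single point $G_\pi$ in $G_\pi\backslash G/G_\pi$ because the conjugated representation $\upsilon^x|_{G_\pi\cap xG_\pi x^{-1}}$ sits over a different orbit point of $\widehat{N}$ for $x\notin G_\pi$, making it disjoint from $\upsilon$ upon restriction to $N$.

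To finish, I would show the two constructions $\rho\mapsto(G\cdot\pi,\upsilon)$ and $(\pi,\upsilon)\mapsto\mathrm{Ind}_{G_\pi}^G(\upsilon)$ are mutually inverse modulo unitary equivalence: equivalence of inductions $\mathrm{Ind}_{G_\pi}^G(\upsilon_1)\cong\mathrm{Ind}_{G_\pi}^G(\upsilon_2)$ forces $\upsilon_1\cong\upsilon_2$, and replacing $\pi$ by a conjugate $x\cdot\pi$ replaces $G_\pi$ by its conjugate and $\upsilon$ by a unitarily equivalent induced-equivalent representation, so only the orbit matters. The main obstacle is packaging the imprimitivity theorem correctly: one must extract from $\rho|_N$ not only the orbit but also a canonical projection-valued measure on $G/G_\pi$ equivariant with $\rho$, and then apply the imprimitivity theorem to identify the triple $(\rho,\text{projection measure})$ with an induced system. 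Once that identification is in hand, together with the irreducibility criterion and disjointness across orbits, the stated partition of $\widehat{G}$ as a disjoint union over $\widehat{N}/G$ follows immediately.
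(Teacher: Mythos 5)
The paper offers no proof of this proposition at all: it is recalled as a known theorem of Mackey, with a pointer to \cite{Lipsman2}, and then immediately applied to the time-frequency group. So there is no internal argument to compare against; what you have done is reconstruct the classical proof from the cited literature, and your reconstruction is correct in outline. The three pillars are the right ones, in the right order: (i) decompose $\rho|_N$ over $\widehat{N}$; irreducibility of $\rho$ makes the resulting quasi-invariant measure class ergodic, and countable separation of $\widehat{N}/G$ --- exactly where regular embedding enters, as you say --- forces concentration on a single orbit $G\cdot\pi\cong G/G_{\pi}$; (ii) the imprimitivity theorem converts this orbit decomposition into a realization $\rho\cong\mathrm{Ind}_{G_{\pi}}^{G}\left(\upsilon\right)$ with $\upsilon|N$ a multiple of $\pi$; (iii) disjointness of $\upsilon^{x}|_N$ and $\upsilon|_N$ for $x\notin G_{\pi}$ gives irreducibility of the induced representations and injectivity of the correspondence. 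Two caveats, neither fatal. First, your display for the restriction is miswritten: in $\rho|_N\cong\int_{\widehat{N}}^{\oplus}\tau\otimes\mathbf{1}_{\mathcal{K}_{\tau}}\,d\nu\left(\tau\right)$ the multiplicity space $\mathcal{K}_{\tau}$ is an auxiliary space, not $\mathcal{H}_{\tau}$; the form $\tau\otimes\mathbf{1}_{\mathcal{H}_{\tau}}$ is special to the Plancherel decomposition of the regular representation. Second, the double-coset form of Mackey's irreducibility criterion that you invoke is clean when $G$ is discrete --- which covers every application in this paper --- but for general locally compact $G$ that criterion carries its own regularity hypotheses; the safer route, and the one taken in the sources the paper cites, is to run the commutant argument through the imprimitivity theorem: any operator commuting with $\mathrm{Ind}_{G_{\pi}}^{G}\left(\upsilon\right)$ commutes with the canonical projection-valued measure on the orbit, hence corresponds to an operator in the commutant of $\upsilon$, which is scalar. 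With those two repairs, your sketch is precisely the standard proof of the result the paper takes on faith.
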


Now, we will apply Proposition \ref{Mackey} to
\[
G=\left\langle T_{k},M_{l}:k\in%
\mathbb{Z}
^{d},l\in B%
\mathbb{Z}
^{d}\right\rangle .
\]
First, we recall from Lemma \ref{finite}, that $G/N$ is a finite group, and
thus $G$ is a compact extension of an abelian group. Referring to
\cite{Lipsman1} I Chapter 4, $G$ is type $I$ and $N$ is regularly embedded.
Let $P\in G$, $k\in B^{-tr}%
\mathbb{Z}
^{d}\cap%
\mathbb{Z}
^{d},$ $l\in B%
\mathbb{Z}
^{d}$ and let $\chi_{\left(  \gamma_{1},\gamma_{2},\sigma\right)  }$ be a
character of $N.$ We define the action of $G$ on the unitary dual of $N$
multiplicatively such that for $P\in G$,%
\begin{equation}
P\cdot\chi_{\left(  \gamma_{1},\gamma_{2},\sigma\right)  }\left(  T_{k}%
M_{l}e^{2\pi i\theta}\right)  =\chi_{\left(  \gamma_{1},\gamma_{2}%
,\sigma\right)  }\left(  P^{-1}\left(  T_{k}M_{l}e^{2\pi i\theta}\right)
P\right)  . \label{coadjoint}%
\end{equation}

\begin{definition}
We define a measurable map $\rho:\widehat{%
\mathbb{R}
^{d}}\rightarrow\Lambda_{2}$ such that $\rho\left(  x\right)  =y_{x}$ if and
only $y_{x}$ is the unique element in $\Lambda_{2}$ such that $x=y_{x}+l,$
where $l\in B^{-tr}%
\mathbb{Z}
^{d}$.
\end{definition}

Since the collection of sets
\[
\left\{  \Lambda_{2}+j:j\in B^{-tr}%
\mathbb{Z}
^{d}\right\}
\]
is a measurable partition of $\widehat{%
\mathbb{R}
^{d}}$ then the map $\rho$ makes sense.

\begin{lemma}
\label{orbit}For $s\in%
\mathbb{Z}
^{d},l\in B%
\mathbb{Z}
^{d}$ and $e^{2\pi i\theta}\in\left[  G,G\right]  $ we have

\begin{enumerate}
\item $T_{s}\cdot\chi_{\left(  \gamma_{1},\gamma_{2},\sigma\right)  }%
=\chi_{\left(  \gamma_{1},\rho\left(  \gamma_{2}+\sigma s\right)
,\sigma\right)  }$

\item $M_{l}\cdot\chi_{\left(  \gamma_{1},\gamma_{2},\sigma\right)  }%
=\chi_{\left(  \gamma_{1},\gamma_{2},\sigma\right)  }$

\item $e^{2\pi i\theta}\cdot\chi_{\left(  \gamma_{1},\gamma_{2},\sigma\right)
}=\chi_{\left(  \gamma_{1},\gamma_{2},\sigma\right)  }$
\end{enumerate}
\end{lemma}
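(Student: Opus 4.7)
The plan is to verify each of the three formulas by a direct computation of $P^{-1}(T_k M_l e^{2\pi i\theta})P$ for $P \in \{T_s, M_l, e^{2\pi i\theta}\}$, using only the canonical commutation relation
$$T_k M_l = e^{-2\pi i\langle l,k\rangle}\, M_l T_k$$
that follows from (\ref{action}), together with the explicit formula for $\chi_{(\gamma_1,\gamma_2,\sigma)}$ and the definition of the $G$-action (\ref{coadjoint}).

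For part (a), I would fix $s \in \mathbb{Z}^d$ and an arbitrary element $T_k M_l e^{2\pi i\theta} \in N$ and compute
$$T_s^{-1}(T_k M_l e^{2\pi i\theta})T_s = T_k\,(T_s^{-1} M_l T_s)\, e^{2\pi i\theta},$$
using that $T_s$ commutes with $T_k$ and with the central phase. The commutation relation gives $T_s^{-1} M_l T_s = e^{2\pi i\langle l,s\rangle} M_l$, so the conjugate equals $T_k M_l e^{2\pi i(\theta+\langle l,s\rangle)}$. Evaluating $\chi_{(\gamma_1,\gamma_2,\sigma)}$ then produces an extra factor $\exp[2\pi i \sigma \langle l,s\rangle] = \exp[2\pi i \langle \sigma s, l\rangle]$, i.e.\ a shift of the $\gamma_2$-parameter by $\sigma s$. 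Since characters of $B\mathbb{Z}^d$ are invariant under translations by $B^{-tr}\mathbb{Z}^d$, I can fold $\gamma_2 + \sigma s$ back into the fundamental domain $\Lambda_2$ using the map $\rho$, which gives precisely $\chi_{(\gamma_1,\rho(\gamma_2+\sigma s),\sigma)}$.

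For part (b), I would conjugate an arbitrary $T_k M_{l'} e^{2\pi i\theta} \in N$ by $M_l$ with $l \in B\mathbb{Z}^d$. The key observation, already used in the proof of Lemma \ref{abelian}, is that for $k \in A\mathbb{Z}^d = B^{-tr}\mathbb{Z}^d \cap \mathbb{Z}^d$ and $l \in B\mathbb{Z}^d$ the pairing $\langle l, k\rangle$ is an integer, so the phase appearing in $M_l^{-1} T_k M_l = e^{-2\pi i\langle l,k\rangle} T_k$ equals $1$. Hence $M_l$ acts trivially by conjugation on every generator of $N$, and so does nothing to the character. For part (c), the element $e^{2\pi i\theta}$ is central in $G$ by construction, so conjugation by it is the identity map on $N$, and the character is unchanged.

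The only substantive point is the bookkeeping with the fundamental domain in (a); everything else is a one-line application of the Heisenberg commutation relation and the integrality pairing $\langle B\mathbb{Z}^d, B^{-tr}\mathbb{Z}^d\rangle \subset \mathbb{Z}$. No deeper machinery is required, and I do not anticipate any essential obstacle beyond carefully tracking the phases.
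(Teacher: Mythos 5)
Your proof is correct and follows essentially the same route as the paper's: conjugate an arbitrary element $T_{k}M_{l}e^{2\pi i\theta}$ of $N$ by $T_{s}$, by a modulation, and by a central phase, use the commutation relation (\ref{action}) to collect the resulting phase factor, and then either absorb it into the $\gamma_{2}$-parameter (folded back into $\Lambda_{2}$ via $\rho$) or observe that it is trivial because $\left\langle l,k\right\rangle \in \mathbb{Z}$ for $l\in B\mathbb{Z}^{d}$ and $k\in B^{-tr}\mathbb{Z}^{d}\cap\mathbb{Z}^{d}$. The only cosmetic differences are that you invoke Lemma \ref{abelian} for part (b) where the paper redoes that integrality computation inline, and that you spell out the $B^{-tr}\mathbb{Z}^{d}$-periodicity of the characters that justifies applying $\rho$, a point the paper leaves implicit.
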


\begin{proof}
Since $e^{2\pi i\theta}$ is a central element of $G$ then clearly part (c) is
correct. Now, for part (a), let $T_{k},M_{l},$ such that $k\in B^{-tr}%
\mathbb{Z}
^{d}\cap%
\mathbb{Z}
^{d},l\in B%
\mathbb{Z}
^{d}$ and $s\in%
\mathbb{Z}
^{d}$%
\begin{align*}
&  T_{s}\cdot\chi_{\left(  \gamma_{1},\gamma_{2},\sigma\right)  }\left(
  T_{k}M_{l}e^{2\pi i\theta}\right) \\
&  =\chi_{\left(  \gamma_{1},\gamma_{2},\sigma\right)  }\left(  T_{s}%
^{-1}\left(  T_{k}M_{l}e^{2\pi i\theta}\right)  T_{s}\right) \\
&  =\chi_{\left(  \gamma_{1},\gamma_{2},\sigma\right)  }\left(  T_{k}%
M_{l}e^{2\pi i\left(  \theta+\left\langle l,s\right\rangle \right)  }\right)
\\&  =\chi_{\left(  \gamma_{1},\gamma_{2}+s\sigma,\sigma\right)  }\left(
T_{k}M_{l}e^{2\pi i\theta}\right)  .
\end{align*}
Thus, $T_{s}\cdot\chi_{\left(  \gamma_{1},\gamma_{2},\sigma\right)  }%
=\chi_{\left(  \gamma_{1},\rho\left(  \gamma_{2}+\sigma s\right)
,\sigma\right)  }.$ For part (b), since $k\in B^{-tr}%
\mathbb{Z}
^{d}\cap%
\mathbb{Z}
^{d},l\in B%
\mathbb{Z}
^{d}$
\begin{align*}
\chi_{\left(  \gamma_{1},\gamma_{2},\sigma\right)  }\left(  M_{-l}\left(
T_{k}M_{l}e^{2\pi i\theta}\right)  M_{l}\right)   &  =\chi_{\left(  \gamma
_{1},\gamma_{2},\sigma\right)  }\left(  M_{-l}M_{l}T_{k}M_{l}e^{2\pi i\theta
}e^{2\pi i\left(  \left\langle l,k\right\rangle \right)  }\right) \\
&  =\chi_{\left(  \gamma_{1},\gamma_{2},\sigma\right)  }\left(  T_{k}%
M_{l}e^{2\pi i\theta}\right)  .
\end{align*}

\end{proof}

\begin{lemma}
\label{functional}The stabilizer group of a fixed character $\chi_{\left(
\gamma_{1},\gamma_{2},\sigma\right)  }$ in the unitary dual of $N$ is given by%
\begin{equation}
G_{\left(  \gamma_{1},\gamma_{2},\sigma\right)  }=\left\langle
\begin{array}
[c]{c}%
T_{k}M_{l}e^{2\pi i\theta}\in G\text{ }|\text{ }\sigma k=B^{-tr}j\text{ for
some }j\in%
\mathbb{Z}
^{d}\\
l\in B%
\mathbb{Z}
^{d},e^{2\pi i\theta}\in\left[  G,G\right]
\end{array}
\right\rangle \label{stab}%
\end{equation}

\end{lemma}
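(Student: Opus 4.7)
The plan is to use Lemma \ref{orbit} directly, reducing the computation of the stabilizer to identifying exactly which translation operators $T_{k}$ (with $k\in\mathbb{Z}^{d}$) fix a given character $\chi_{(\gamma_{1},\gamma_{2},\sigma)}$. Every element of $G$ has the form $T_{k}M_{l}e^{2\pi i\theta}$ with $k\in\mathbb{Z}^{d}$, $l\in B\mathbb{Z}^{d}$ and $e^{2\pi i\theta}\in [G,G]$, and the $G$-action (\ref{coadjoint}) on $\widehat{N}$ is multiplicative in the sense that the actions of the factors compose.

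First I would observe that parts (b) and (c) of Lemma \ref{orbit} immediately tell us that every modulation $M_{l}$ with $l\in B\mathbb{Z}^{d}$ and every central element $e^{2\pi i\theta}\in[G,G]$ belongs to the stabilizer $G_{(\gamma_{1},\gamma_{2},\sigma)}$ unconditionally. Therefore the only nontrivial condition comes from the translation factor $T_{k}$, and by part (a) of Lemma \ref{orbit} we have
\[
T_{k}\cdot\chi_{(\gamma_{1},\gamma_{2},\sigma)}=\chi_{(\gamma_{1},\rho(\gamma_{2}+\sigma k),\sigma)}.
\]
This equals $\chi_{(\gamma_{1},\gamma_{2},\sigma)}$ if and only if $\rho(\gamma_{2}+\sigma k)=\gamma_{2}$. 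Since $\rho$ is the reduction modulo $B^{-tr}\mathbb{Z}^{d}$ into $\Lambda_{2}$, this is equivalent to $\sigma k\in B^{-tr}\mathbb{Z}^{d}$, i.e.\ the existence of some $j\in\mathbb{Z}^{d}$ with $\sigma k=B^{-tr}j$.

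Combining these observations, a general element $T_{k}M_{l}e^{2\pi i\theta}$ acts on $\chi_{(\gamma_{1},\gamma_{2},\sigma)}$ as $T_{k}$ alone does, so it lies in the stabilizer exactly when $\sigma k\in B^{-tr}\mathbb{Z}^{d}$, which is the defining condition in (\ref{stab}). Conversely, the generators on the right-hand side of (\ref{stab}) all fix $\chi_{(\gamma_{1},\gamma_{2},\sigma)}$ by what was just shown, and the stabilizer is a subgroup of $G$, so it contains the group they generate. This gives both inclusions.

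I do not expect a serious obstacle: the computation is a direct consequence of Lemma \ref{orbit}. The only subtle point is making sure that the ``multiplicativity'' of the $G$-action on $\widehat{N}$ is handled correctly, so that no cross terms between $T_{k}$, $M_{l}$ and the central factor spoil the reduction to the $T_{k}$-component. Since the action is by inner conjugation inside $G$ (which is a genuine group automorphism of $N$) and the commutator subgroup is central, these cross terms only produce additional central factors that already stabilize $\chi_{(\gamma_{1},\gamma_{2},\sigma)}$; hence the reduction is valid.
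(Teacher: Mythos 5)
Your argument is correct and is precisely the reasoning the paper leaves implicit: Lemma \ref{functional} is stated there without any proof, immediately after Lemma \ref{orbit}, and the intended justification is exactly the reduction you carry out — modulations and central elements stabilize unconditionally by parts (b) and (c), while $T_{k}$ stabilizes if and only if $\rho\left(  \gamma_{2}+\sigma k\right)  =\gamma_{2}$, i.e. $\sigma k\in B^{-tr}\mathbb{Z}^{d}$. Your care with the multiplicativity of the conjugation action (so that a general element $T_{k}M_{l}e^{2\pi i\theta}$ acts exactly as its translation part) is also sound, so the proof is complete as written.
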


Thanks to (\ref{stab}), we may write $
G_{\left(  \gamma_{1},\gamma_{2},\sigma\right)  }=A\left(  \sigma\right)
\mathbb{Z}
^{d}\times B%
\mathbb{Z}
^{d}\times\left[  G,G\right]$ where $A\left(  \sigma\right)  $ is a full-rank matrix of order $d$ and
$A\left(  \sigma\right)
\mathbb{Z}
^{d}\geq A%
\mathbb{Z}
^{d}.$ For a fixed element $\left(  \gamma_{1},\gamma_{2},\sigma\right)  $ in
the unitary dual of $N,$ we define the set%
\begin{equation}
\mathcal{U}_{\sigma}=\Lambda\left(  \sigma\right)  \times\Lambda_{2}%
\times\widehat{\left[  G,G\right]  } \label{SetU}%
\end{equation}
where $\Lambda\left(  \sigma\right)  $ is a fundamental domain for
$\frac{\widehat{%
\mathbb{R}
^{d}}}{A\left(  \sigma\right)  ^{-tr}%
\mathbb{Z}
^{d}}$ such that $\Lambda_{1}$ is contained in $\Lambda\left(  \sigma\right)
.$ Since we need Proposition \ref{Mackey} to compute the unitary dual of $G,$
we would like to be able to assert that if $\chi_{\left(  \gamma_{1}%
,\gamma_{2},\sigma\right)  }$ is a character of the group $N,$ it is possible
to extend $\chi_{\left(  \gamma_{1},\gamma_{2},\sigma\right)  }$ to a
character of the stabilizer group $G_{\left(  \gamma_{1},\gamma_{2}%
,\sigma\right)  }.$ However, we will need a few lemmas first.

\begin{lemma}
\label{EXT}Let $\lambda=\left(  \eta,\gamma_{2},\sigma\right)  \in
\mathcal{U}_{\sigma}$ such that $\eta=\gamma_{1}+A^{-tr}j$ for some $j\in%
\mathbb{Z}
^{d}.$ If $\chi_{\left(  \gamma_{1}+A^{-tr}j,\gamma_{2},\sigma\right)  }$ is a
character of $G_{\left(  \gamma_{1},\gamma_{2},\sigma\right)  },$ then
\[
\chi_{\left(  \gamma_{1}+A^{-tr}j,\gamma_{2},\sigma\right)  }\left(
T_{k}M_{l}e^{2\pi i\theta}\right)  =\chi_{\left(  \gamma_{1},\gamma_{2}%
,\sigma\right)  }\left(  T_{k}M_{l}e^{2\pi i\theta}\right)
\]

\end{lemma}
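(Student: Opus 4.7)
The plan is to prove the equality by a direct computation comparing the product formulas defining the two characters, and then observing that the factor in which they differ is trivial on $N$. Both characters share the same factors involving $M_{l}$ and $e^{2\pi i\theta}$, so the ratio reduces to the single phase $\exp[2\pi i\langle A^{-tr}j, k\rangle]$; the task therefore reduces to showing this factor equals $1$ for the elements at which the equality is tested.

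To carry this out, I would unfold the formula to write
\[
\chi_{(\gamma_{1}+A^{-tr}j,\gamma_{2},\sigma)}(T_{k}M_{l}e^{2\pi i\theta}) = e^{2\pi i\langle \gamma_{1},k\rangle}\, e^{2\pi i\langle A^{-tr}j,k\rangle}\, e^{2\pi i\langle \gamma_{2},l\rangle}\, e^{2\pi i\sigma\theta},
\]
so that the first, third and fourth factors together reproduce $\chi_{(\gamma_{1},\gamma_{2},\sigma)}(T_{k}M_{l}e^{2\pi i\theta})$. The element $T_{k}M_{l}e^{2\pi i\theta}$ on which the equality is asserted lies in $N$, the group on which $\chi_{(\gamma_{1},\gamma_{2},\sigma)}$ was originally defined, so by the very definition of $N$ we have $k\in B^{-tr}\mathbb{Z}^{d}\cap\mathbb{Z}^{d}=A\mathbb{Z}^{d}$. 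Writing $k=Am$ for some $m\in\mathbb{Z}^{d}$ then gives $\langle A^{-tr}j,k\rangle=\langle j,A^{-1}Am\rangle=\langle j,m\rangle\in\mathbb{Z}$, so $\exp[2\pi i\langle A^{-tr}j,k\rangle]=1$ and the desired equality follows.

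There is no real obstacle in this proof; its entire content is the standard duality statement that $A^{-tr}\mathbb{Z}^{d}$ is the annihilator of $A\mathbb{Z}^{d}$ under the canonical pairing on $\mathbb{R}^{d}$, so shifts of $\gamma_{1}$ by elements of $A^{-tr}\mathbb{Z}^{d}$ are invisible to characters restricted to $N$. Conceptually, this is exactly what guarantees that every character $\chi_{(\gamma_{1}+A^{-tr}j,\gamma_{2},\sigma)}$ of the stabilizer is a genuine extension of $\chi_{(\gamma_{1},\gamma_{2},\sigma)}|_{N}$, and it justifies indexing the family of such extensions by the quotient $A^{-tr}\mathbb{Z}^{d}/A(\sigma)^{-tr}\mathbb{Z}^{d}$. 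This is precisely the input needed to apply Proposition \ref{Mackey} and to read the set $\mathcal{U}_{\sigma}$ as parametrizing all of the extensions, which in turn will drive the description of $\widehat{G}$ and of the Plancherel measure in the rational case.
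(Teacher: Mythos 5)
Your proof is correct and follows essentially the same route as the paper's: expand $\chi_{\left(\gamma_{1}+A^{-tr}j,\gamma_{2},\sigma\right)}$ on an element $T_{k}M_{l}e^{2\pi i\theta}\in N$, use $k\in A\mathbb{Z}^{d}$ to write $k=Am$, and conclude that the extra factor $\exp\left[2\pi i\left\langle A^{-tr}j,k\right\rangle\right]=\exp\left[2\pi i\left\langle j,m\right\rangle\right]=1$. Your closing remarks correctly identify the role of this lemma (it makes $\chi_{\lambda}$ a genuine extension of the character from $N$ to the stabilizer, feeding into the Mackey analysis), which is exactly how the paper uses it.
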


\begin{proof}
Let $T_{k}M_{l}e^{2\pi i\theta}\in N.$ Since $k\in A%
\mathbb{Z}
^{d},$ there exists $k^{\prime}\in%
\mathbb{Z}
^{d}$ such that $k=Ak^{\prime}$
\begin{align*}
\chi_{\lambda}\left(  T_{k}M_{l}e^{2\pi i\theta}\right)   &  =\exp\left(  2\pi i\left\langle \gamma_{1},k\right\rangle \right)
\exp\left(  2\pi i\left\langle j,k^{\prime}\right\rangle \right)  \exp\left(
2\pi i\left\langle \gamma_{2},l\right\rangle \right)  e^{2\pi i\sigma\theta}\\
&  =\exp\left(  2\pi i\left\langle \gamma_{1},k\right\rangle \right)
\exp\left(  2\pi i\left\langle \gamma_{2},l\right\rangle \right)  e^{2\pi
i\sigma\theta}\\
&  =\chi_{\left(  \gamma_{1},\gamma_{2},\sigma\right)  }\left(  T_{k}%
M_{l}e^{2\pi i\theta}\right)  .
\end{align*}

\end{proof}

\begin{lemma}
\label{X1}For a fixed $
\left(  \gamma_{1},\gamma_{2},\sigma\right)  \in\Lambda_{1}\times\Lambda
_{2}\times\widehat{%
\mathbb{Z}
_{m}}$ in the unitary dual of $N,$ if $\lambda=\left(  \eta,\gamma_{2},\sigma\right)
\in\mathcal{U}_{\sigma},\eta=\gamma_{1}+A^{-tr}j$ for some $j\in%
\mathbb{Z}
^{d}$ then $
\chi_{\lambda}\left[  G_{\left(  \gamma_{1},\gamma_{2},\sigma\right)
},G_{\left(  \gamma_{1},\gamma_{2},\sigma\right)  }\right]  =1.$
\end{lemma}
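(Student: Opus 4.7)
The plan is to reduce the statement to a direct computation on commutators, which all lie in the central subgroup $[G,G]$, and then to verify triviality of $\chi_\lambda$ on such central elements using the defining condition of the stabilizer. I begin with the observation that $G$ is a two-step nilpotent discrete group: every commutator $[g_1,g_2]$ with $g_1,g_2\in G$ lies in $[G,G]\subset N$. In particular $[G_{(\gamma_1,\gamma_2,\sigma)},G_{(\gamma_1,\gamma_2,\sigma)}]\subset [G,G]\subset N$, so $\chi_\lambda$ is well-defined on these commutators via its formula on $N$. Because $\chi_\lambda$ is multiplicative, it suffices to prove $\chi_\lambda([g_1,g_2])=1$ for arbitrary generators of the stabilizer.

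Next I carry out the explicit commutator computation. Write $g_i=T_{k_i}M_{l_i}e^{2\pi i\theta_i}$ with $k_i\in A(\sigma)\mathbb{Z}^d$, $l_i\in B\mathbb{Z}^d$, and $e^{2\pi i\theta_i}\in[G,G]$. The central phases $e^{2\pi i\theta_i}$ drop out of the commutator. Using the Heisenberg relation from (\ref{action}) (which gives $M_yT_x=e^{2\pi i\langle y,x\rangle}T_xM_y$), a direct rearrangement yields
\[
g_1g_2=e^{2\pi i\langle l_1,k_2\rangle}T_{k_1+k_2}M_{l_1+l_2},\qquad g_2g_1=e^{2\pi i\langle l_2,k_1\rangle}T_{k_1+k_2}M_{l_1+l_2},
\]
and therefore
\[
[g_1,g_2]=e^{2\pi i(\langle l_1,k_2\rangle-\langle l_2,k_1\rangle)}.
\]
This is a purely central element, so in the $N$-parametrization it has $k=0$, $l=0$, and only the $\sigma$-component of $\lambda=(\eta,\gamma_2,\sigma)$ contributes:
\[
\chi_\lambda([g_1,g_2])=\exp\bigl(2\pi i\,\sigma(\langle l_1,k_2\rangle-\langle l_2,k_1\rangle)\bigr).
\]

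To finish, I invoke the stabilizer condition (\ref{stab}): for every $k_i\in A(\sigma)\mathbb{Z}^d$, $\sigma k_i\in B^{-tr}\mathbb{Z}^d$. Writing $l_j=B\tilde l_j$ and $\sigma k_i=B^{-tr}\tilde k_i$ with $\tilde l_j,\tilde k_i\in\mathbb{Z}^d$ gives
\[
\sigma\langle l_j,k_i\rangle=\langle l_j,\sigma k_i\rangle=\langle B\tilde l_j,B^{-tr}\tilde k_i\rangle=\langle \tilde l_j,\tilde k_i\rangle\in\mathbb{Z},
\]
so the total exponent is an integer and $\chi_\lambda([g_1,g_2])=1$. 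Since the commutators generate $[G_{(\gamma_1,\gamma_2,\sigma)},G_{(\gamma_1,\gamma_2,\sigma)}]$, this establishes the lemma.

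The main obstacle is not conceptual but notational: one must be careful that $\sigma$ is interpreted consistently — as an integer scalar multiplier when acting on the lattice vector $k_i$, and as the index of a character on the cyclic group $[G,G]\cong\mathbb{Z}_m$ when pulled out of the exponent. These interpretations are compatible by the construction of $\widehat{[G,G]}$, so the calculation goes through as above.
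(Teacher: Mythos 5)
Your proof is correct and follows essentially the same route as the paper: reduce to commutators, which are central phases, apply $\chi_\lambda$ to get $\sigma$ times inner products in the exponent, and then use the stabilizer condition $\sigma k\in B^{-tr}\mathbb{Z}^{d}$ together with $l\in B\mathbb{Z}^{d}$ to conclude the exponent is an integer. The only difference is cosmetic: you compute the commutator of two arbitrary stabilizer elements, while the paper checks the elementary commutators $T_{k}M_{l}\tau T_{k}^{-1}M_{l}^{-1}\tau^{-1}$, which generate the same subgroup since the group is two-step nilpotent.
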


\begin{proof}
Let $T_{k}M_{l}\tau\in G_{\left(  \gamma_{1},\gamma_{2},\sigma\right)  }.$ It
suffices to check that
\[
\chi_{\lambda}\left(  T_{k}M_{l}\tau T_{k}^{-1}M_{l}^{-1}\tau^{-1}\right)  =1
\]
where $\tau\in\left[  G,G\right]  .$ First, we observe that
\[
\chi_{\lambda}\left(  T_{k}M_{l}\tau T_{k}^{-1}M_{l}^{-1}\tau^{-1}\right)
=\chi_{\lambda}\left(  T_{k}M_{l}T_{k}^{-1}M_{l}^{-1}\right)  =\exp\left[
-2\pi i\left\langle \sigma k,l\right\rangle \right]  .
\]
Applying Lemma \ref{functional}, since $T_{k}\in G_{\left(  \gamma_{1}%
,\gamma_{2},\sigma\right)  }$ there exists some $p\in%
\mathbb{Z}
^{d}$ such that
\[
\chi_{\lambda}\left(  T_{k}M_{l}\tau T_{k}^{-1}M_{l}^{-1}\tau^{-1}\right)
=\exp\left[  2\pi i\sigma\left\langle B^{-tr}p,l\right\rangle \right]  .
\]
Secondly, using the fact that $l\in B%
\mathbb{Z}
^{d}$ there exists $l^{\prime}\in%
\mathbb{Z}
^{d}$ such that
\[
\chi_{\lambda}\left(  T_{k}M_{l}\tau T_{k}^{-1}M_{l}^{-1}\tau^{-1}\right)
=\exp\left[  2\pi i\sigma\left\langle B^{-tr}p,Bl^{\prime}\right\rangle
\right]  =1.
\]

\end{proof}

The following lemma allows us to establish the extension of characters from
the normal subgroup $N$ to the stabilizer groups.

\begin{lemma}
\label{extension character}Fix $\left(  \gamma_{1},\gamma_{2},\sigma\right)  $
in the unitary dual of $N.$ Let $\lambda=\left(  \eta,\gamma_{2}%
,\sigma\right)  \in\mathcal{U}_{\sigma}$. Then $\chi_{\lambda}$ defines a
character on $G_{\left(  \gamma_{1},\gamma_{2},\sigma\right)  }$.
\end{lemma}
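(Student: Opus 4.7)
The plan is to verify that the formula
\[
\chi_\lambda\bigl(T_k M_l e^{2\pi i\theta}\bigr) = e^{2\pi i\langle \eta,k\rangle}\, e^{2\pi i\langle \gamma_2,l\rangle}\, e^{2\pi i\sigma\theta}
\]
defines a well-defined homomorphism from $G_{(\gamma_1,\gamma_2,\sigma)}$ into $\mathbb{T}$. The strategy is standard: (i) prove the formula is unambiguously defined, and (ii) prove multiplicativity, with Lemma \ref{X1} doing the substantive work in step (ii).

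First I would appeal to a unique normal form in $G$: every element of $G$ admits a unique factorization $T_k M_l e^{2\pi i\theta}$ with $k\in\mathbb{Z}^d$, $l\in B\mathbb{Z}^d$, $e^{2\pi i\theta}\in [G,G]$. This follows from relation \eqref{action}, which exhibits $G$ as a two-step nilpotent group with central commutator subgroup $[G,G]$ and abelian quotient modeled on $\mathbb{Z}^d\times B\mathbb{Z}^d$. Restricting to $G_{(\gamma_1,\gamma_2,\sigma)} = A(\sigma)\mathbb{Z}^d \times B\mathbb{Z}^d \times [G,G]$ (Lemma \ref{functional}) gives the same uniqueness with $k\in A(\sigma)\mathbb{Z}^d$, so the formula yields a well-defined set-theoretic map.

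Next I would check multiplicativity. Using the commutation relation $M_l T_k = e^{2\pi i\langle l,k\rangle} T_k M_l$ obtained from \eqref{action}, the product of two stabilizer elements has normal form
\[
\bigl(T_{k_1}M_{l_1}\tau_1\bigr)\bigl(T_{k_2}M_{l_2}\tau_2\bigr) = T_{k_1+k_2}\, M_{l_1+l_2}\,\tau_1\tau_2\, e^{2\pi i\langle l_1,k_2\rangle}.
\]
Applying $\chi_\lambda$ to both sides and comparing with $\chi_\lambda(T_{k_1}M_{l_1}\tau_1)\cdot\chi_\lambda(T_{k_2}M_{l_2}\tau_2)$, the only discrepancy that can appear is the central correction $\chi_\lambda\bigl(e^{2\pi i\langle l_1,k_2\rangle}\bigr) = e^{2\pi i\sigma\langle l_1,k_2\rangle}$, and multiplicativity reduces to showing this factor equals $1$.

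This last step is precisely the content of Lemma \ref{X1}: since $T_{k_2}M_{l_2}\tau_2\in G_{(\gamma_1,\gamma_2,\sigma)}$, Lemma \ref{functional} forces $\sigma k_2 = B^{-tr} p$ for some $p\in\mathbb{Z}^d$, and writing $l_1 = B l_1'$ with $l_1'\in\mathbb{Z}^d$ gives $\sigma\langle l_1,k_2\rangle = \langle B l_1',B^{-tr}p\rangle = \langle l_1', p\rangle\in\mathbb{Z}$. Combined with Lemma \ref{EXT}, which already ensures $\chi_\lambda|_N$ agrees with the original character $\chi_{(\gamma_1,\gamma_2,\sigma)}$ on $N$, this completes the verification. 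I do not expect a serious conceptual obstacle here; the main care needed is bookkeeping — tracking the difference between $\eta$ and $\gamma_1$, between $A\mathbb{Z}^d$ and the larger $A(\sigma)\mathbb{Z}^d$, and correctly separating the abelian and central contributions in the normal form — since the two preceding lemmas already isolate the one arithmetic identity that makes the argument work.
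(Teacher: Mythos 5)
Your proposal is correct and follows essentially the same route as the paper's own proof: write the product of two stabilizer elements in normal form $T_{k_1+k_2}M_{l_1+l_2}\tau_1\tau_2\,e^{2\pi i\langle l_1,k_2\rangle}$, reduce multiplicativity to showing $e^{2\pi i\sigma\langle l_1,k_2\rangle}=1$, and dispose of that factor via Lemma \ref{X1} (whose arithmetic, $\sigma\langle l_1,k_2\rangle=\langle l_1',p\rangle\in\mathbb{Z}$ using Lemma \ref{functional}, you correctly re-derive), with Lemma \ref{EXT} handling the comparison with $\chi_{(\gamma_1,\gamma_2,\sigma)}$. The only addition is your explicit well-definedness step via the unique normal form, which the paper leaves implicit.
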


\begin{proof}
Given $T_{k_{1}}M_{l_{1}}e^{2\pi i\theta_{1}},$ and $T_{k_{2}}M_{l_{2}}e^{2\pi
i\theta_{2}}\in G_{\left(  \gamma_{1},\gamma_{2},\sigma\right)  },$ it is easy
to see that
\[
\left(  T_{k_{1}}M_{l_{1}}e^{2\pi i\theta_{1}}\right)  \left(  T_{k_{2}%
}M_{l_{2}}e^{2\pi i\theta_{2}}\right)  =T_{k_{1}+k_{2}}M_{l_{1}+l_{2}}e^{2\pi
i\left(  \theta_{1}+\theta_{2}\right)  }e^{2\pi i\left\langle l_{1}%
,k_{2}\right\rangle }%
\]
where $e^{2\pi i\left\langle l_{1},k_{2}\right\rangle }\in\left[  G_{\left(
\gamma_{1},\gamma_{2},\sigma\right)  },G_{\left(  \gamma_{1},\gamma_{2}%
,\sigma\right)  }\right]  .$ We want to show that $\chi_{\lambda}$ defines a
homomorphism from $G_{\left(  \gamma_{1},\gamma_{2},\sigma\right)  }$ into the
circle group. Since $\chi_{\lambda}\left[  G_{\left(  \gamma_{1},\gamma
_{2},\sigma\right)  },G_{\left(  \gamma_{1},\gamma_{2},\sigma\right)
}\right]  =1$ then
\begin{align*}
&  \chi_{\lambda}\left(  \left(  T_{k_{1}}M_{l_{1}}e^{2\pi i\theta_{1}%
}\right)  \left(  T_{k_{2}}M_{l_{2}}e^{2\pi i\theta_{2}}\right)  \right) \\
&  =\chi_{\lambda}\left(  T_{k_{1}}T_{k_{2}}M_{l_{1}}M_{l_{2}}e^{2\pi
i\theta_{2}}e^{2\pi i\theta_{1}}e^{2\pi i\left\langle l_{1},k_{2}\right\rangle
}\right) \\
&  =\chi_{\lambda}\left(  T_{k_{1}+k_{2}}M_{l_{1}+l_{2}}e^{2\pi i\left(
\theta_{1}+\theta_{2}+\left\langle l_{1},k_{2}\right\rangle \right)  }\right)
\\
&  =\exp\left(  2\pi i\left\langle \eta,k_{1}+k_{2}\right\rangle \right)
\exp\left(  2\pi i\left\langle \gamma_{2},l_{1}+l_{2}\right\rangle \right)
e^{2\pi i\sigma\left(  \theta_{1}+\theta_{2}\right)  }e^{2\pi i\left(
\sigma\left\langle l_{1},k_{2}\right\rangle \right)  }%
\end{align*}
From Lemma \ref{X1}, $e^{2\pi i\left(  \sigma\left\langle l_{1},k_{2}%
\right\rangle \right)  }=1$ and
\begin{align*}
&  \chi_{\lambda}\left(  \left(  T_{k_{1}}M_{l_{1}}e^{2\pi i\theta_{1}%
}\right)  \left(  T_{k_{2}}M_{l_{2}}e^{2\pi i\theta_{2}}\right)  \right) \\
&  =\exp\left(  2\pi i\left\langle \eta,k_{1}+k_{2}\right\rangle \right)
\exp\left(  2\pi i\left\langle \gamma_{2},l_{1}+l_{2}\right\rangle \right)
e^{2\pi i\sigma\left(  \theta_{1}+\theta_{2}\right)  }%
\end{align*}
Now, using Lemma \ref{EXT}%
\[
\chi_{\lambda}\left(  \left(  T_{k_{1}}M_{l_{1}}e^{2\pi i\theta_{1}}\right)
\left(  T_{k_{2}}M_{l_{2}}e^{2\pi i\theta_{2}}\right)  \right)  =\chi
_{\lambda}\left(  T_{k_{1}}M_{l_{1}}e^{2\pi i\theta_{1}}\right)  \chi
_{\lambda}\left(  T_{k_{2}}M_{l_{2}}e^{2\pi i\theta_{2}}\right)  .
\]
Thus $\chi_{\lambda}$ defines a character on $G_{\left(  \gamma_{1},\gamma
_{2},\sigma\right)  }.$
\end{proof}

\begin{remark}
Fix $\left(  \gamma_{1},\gamma_{2},\sigma\right)  $ in the unitary dual of
$N.$ Let $\eta=\gamma_{1}+A^{-tr}j\in\mathcal{U}_{\sigma}$ where $j\in%
\mathbb{Z}
^{d}.$ The character $\chi_{\left(  \eta,\gamma_{2},\sigma\right)  }$ is
called an extension of $\chi_{\left(  \gamma_{1},\gamma_{2},\sigma\right)  }$
from $N$ to the stabilizer group $G_{\left(  \gamma_{1},\gamma_{2}%
,\sigma\right)  }.$
\end{remark}

\begin{proposition}
The unitary dual of $G$ is parameterized by the set%
\[
\Sigma=%
{\displaystyle\bigcup\limits_{\lambda\in\Omega}}
\widehat{G_{\lambda}}\text{ where }\Omega=%
{\displaystyle\bigcup\limits_{\sigma\in\widehat{\left[  G,G\right]  }}}
\left(  \Lambda_{1}\times\mathbf{E}_{\sigma}\times\left\{  \sigma\right\}
\right)  ,
\]
and $\mathbf{E}_{\sigma}$ is a measurable subset of $\Lambda_{2}$ satisfying
the condition
\[%
{\displaystyle\bigcup\limits_{s\in\mathbb{Z}^{d}}}
\rho\left(  \mathbf{E}_{\sigma}+\sigma s\right)  =\Lambda_{2}.
\]

\end{proposition}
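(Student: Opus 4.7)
The plan is to apply the Mackey machine (Proposition \ref{Mackey}) to the abelian normal subgroup $N$ of Proposition \ref{normal}. By Lemma \ref{finite}, $G/N$ is a finite abelian group, so $G$ is a compact extension of a type~I group; hence $G$ is type~I and $N$ is regularly embedded. Proposition \ref{Mackey} then asserts that every $\pi \in \widehat{G}$ has the form $\mathrm{Ind}_{G_\lambda}^G(\upsilon)$ for some $G$-orbit representative $\chi_\lambda \in \widehat{N}$ and some $\upsilon \in \widehat{G_\lambda}$ whose restriction to $N$ is a multiple of $\chi_\lambda$.

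First I would describe the orbit space $\widehat{N}/G$. Lemma \ref{orbit} shows that modulations and central commutators act trivially on $\widehat{N}$, whereas $T_s$ acts only on the second coordinate via $\gamma_2 \mapsto \rho(\gamma_2 + \sigma s)$. Consequently $\gamma_1 \in \Lambda_1$ and $\sigma \in \widehat{[G,G]}$ are complete orbit invariants, while along the second coordinate exactly one representative per $\mathbb{Z}^d$-orbit is to be selected from $\Lambda_2$. A measurable subset $\mathbf{E}_\sigma \subset \Lambda_2$ satisfies
\[
\bigcup_{s \in \mathbb{Z}^d} \rho(\mathbf{E}_\sigma + \sigma s) = \Lambda_2
\]
precisely when every orbit meets $\mathbf{E}_\sigma$; a standard Borel selection then yields a genuine transversal, producing the description $\Omega = \bigcup_\sigma (\Lambda_1 \times \mathbf{E}_\sigma \times \{\sigma\})$ of the orbit space.

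Next I would identify, for each $\lambda \in \Omega$, the irreducible representations of $G_\lambda$ (explicitly described in Lemma \ref{functional}) whose restriction to $N$ is a multiple of $\chi_\lambda$. Lemma \ref{extension character} exhibits an extension of $\chi_\lambda$ to a \emph{character} of $G_\lambda$, so there is no obstruction to extension. Since $G_\lambda/N$ is a finite abelian group (being a subgroup of $G/N$), Mackey theory applied to the inclusion $N \triangleleft G_\lambda$ forces every irreducible representation of $G_\lambda$ lying over $\chi_\lambda$ to be one-dimensional: any such irrep has the form $\widetilde{\chi_\lambda} \otimes \psi$ with $\psi$ a character of $G_\lambda/N$. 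These extensions form a torsor over $\widehat{G_\lambda/N} \subset \widehat{G_\lambda}$, and combining with Proposition \ref{Mackey} and taking the disjoint union over $\lambda \in \Omega$ yields the parameterization $\Sigma = \bigcup_{\lambda \in \Omega}\widehat{G_\lambda}$.

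The main obstacle I anticipate is the construction of the measurable transversal $\mathbf{E}_\sigma$: when $\sigma$ is trivial the $\mathbb{Z}^d$-action on $\Lambda_2$ collapses and one may take $\mathbf{E}_\sigma = \Lambda_2$, but for nontrivial $\sigma$ one must analyze the action $\gamma_2 \mapsto \rho(\gamma_2 + \sigma s)$ on the compact group $\widehat{\mathbb{R}^d}/B^{-tr}\mathbb{Z}^d$ and produce a Borel set whose translates cover $\Lambda_2$. A secondary verification is that distinct points of $\Sigma$ yield inequivalent induced representations of $G$, which follows from the uniqueness clause of Proposition \ref{Mackey} together with Lemma \ref{EXT}.
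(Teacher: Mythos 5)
Your proposal is correct and follows essentially the same route as the paper: you use Lemma \ref{orbit} to see that $\gamma_{1}$ and $\sigma$ are orbit invariants while $T_{s}$ translates $\gamma_{2}$ by $\rho(\gamma_{2}+\sigma s)$, choose measurable cross-sections $\mathbf{E}_{\sigma}$ so that $\Omega$ parameterizes $\widehat{N}/G$, and then invoke Proposition \ref{Mackey}. The paper's own proof is exactly this (and terser); the extra details you supply — the extension of $\chi_{\lambda}$ to $G_{\lambda}$ via Lemma \ref{extension character} and the identification of the irreducibles over $\chi_{\lambda}$ with characters of $G_{\lambda}/N$ — are handled by the paper in the surrounding text rather than inside the proof, so they are consistent with, not divergent from, the paper's argument.
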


\begin{proof}
Fixing $\sigma\in\widehat{\left[  G,G\right]  },$ recall that
\[
G\cdot\left(  \gamma_{1},\gamma_{2},\sigma\right)  =\left\{  \left(
\gamma_{1},\rho\left(  \gamma_{2}+\sigma s\right)  ,\sigma\right)  :s\in%
\mathbb{Z}
^{d}\right\}  .
\]
For a fixed $\sigma\in\widehat{\left[  G,G\right]  },$ we pick a measurable
set $\mathbf{E}_{\sigma}\subset$ $\Lambda_{2}$ such that
\begin{equation}%
{\displaystyle\bigcup\limits_{s\in\mathbb{Z}^{d}}}
\rho\left(  \mathbf{E}_{\sigma}+\sigma s\right)  =\Lambda_{2}. \label{union}%
\end{equation}
The set
\[
\Omega=%
{\displaystyle\bigcup\limits_{\sigma\in\widehat{\left[  G,G\right]  }}}
\left(  \Lambda_{1}\times\mathbf{E}_{\sigma}\times\left\{  \sigma\right\}
\right)
\]
parameterizes the orbit space $\widehat{N}/G.$ Thus, following Mackey's result
(see Proposition \ref{Mackey}), the unitary dual of $G$ is parametrized by the set
\[
\Sigma=%
{\displaystyle\bigcup\limits_{\left(  \gamma_{1},\gamma_{2},\sigma\right)
\in\Omega}}
\widehat{G_{\left(  \gamma_{1},\gamma_{2},\sigma\right)  }}.
\]

\end{proof}

Following the description given in \cite{Lipsman1}, Section $4$, let
$\chi_{\left(  \gamma_{1},\gamma_{2},\sigma\right)  }\in\widehat{N}$ and let
$\chi_{\left(  \gamma_{1},\gamma_{2},\sigma\right)  }^{j}=\chi_{\left(
\gamma_{1}+A^{-tr}j,\gamma_{2},\sigma\right)  }$ be its extended
representation from $N$ to $G_{\left(  \gamma_{1},\gamma_{2},\sigma\right)
}.$ Let $\zeta$ be an irreducible representation of $G_{\left(  \gamma
_{1},\gamma_{2},\sigma\right)  }/N$ and define its lift to $G_{\left(
\gamma_{1},\gamma_{2},\sigma\right)  }$ which we denote by $\overline{\zeta}.$
For a fixed $\left(  \gamma_{1},\gamma_{2},\sigma,\zeta\right)  $ we define
the representation
\[
\rho_{\left(  \gamma_{1},\gamma_{2},\sigma,\zeta\right)  }=\mathrm{Ind}%
_{G_{\left(  \gamma_{1},\gamma_{2},\sigma\right)  }}^{G}\left(  \chi_{\left(
\gamma_{1},\gamma_{2},\sigma\right)  }^{j}\otimes\overline{\zeta}\right)
\]
acting in the Hilbert space
\begin{equation}
\mathcal{H}_{\left(  \gamma_{1},\gamma_{2},\sigma,\zeta\right)  }=\left\{
\begin{array}
[c]{c}%
\mathbf{u}:G\rightarrow%
\mathbb{C}
:\\
\mathbf{u}\left(  T_{k}P\right)  =\left[  \left(  \chi_{\left(  \gamma
_{1},\gamma_{2},\sigma\right)  }^{j}\otimes\overline{\zeta}\right)  \left(
P\right)  \right]  ^{-1}\mathbf{u}\left(  T_{k}\right)  ,\\
\text{where }P\in G_{\left(  \gamma_{1},\gamma_{2},\sigma\right)  }%
\end{array}
\right\}  \label{space}%
\end{equation}
which is naturally identified with
\[
l^{2}\left(  G/G_{\left(  \gamma_{1},\gamma_{2},\sigma\right)  }\right)  \cong%
\mathbb{C}
^{\mathrm{card}\left(  G/G_{\left(  \gamma_{1},\gamma_{2},\sigma\right)
}\right)  }.
\]
The inner product on $\mathcal{H}_{\left(  \gamma_{1},\gamma_{2},\sigma
,\zeta\right)  }$ is given by
\[
\left\langle \mathbf{u,v}\right\rangle _{\mathcal{H}_{\left(  \gamma
_{1},\gamma_{2},\sigma,\zeta\right)  }}=\sum_{PG_{\left(  \gamma_{1}%
,\gamma_{2},\sigma\right)  }\in G/G_{\left(  \gamma_{1},\gamma_{2}%
,\sigma\right)  }}\mathbf{u}\left(  P\right)  \overline{\mathbf{v}\left(
P\right)  }%
\]
Notice that the number of elements in $G/G_{\left(  \gamma_{1},\gamma
_{2},\sigma\right)  }$ is bounded above by the order of the finite group
\[
G/N\cong\frac{%
\mathbb{Z}
^{d}}{A%
\mathbb{Z}
^{d}}%
\]
which has precisely $\left\vert \det A\right\vert $ elements.

\begin{remark}
Every irreducible representation of $G$ is monomial. That is, every
irreducible representation of $G$ is induced by a one-dimensional
representation of some subgroup of $G.$ Given $\mathbf{u}$ $\mathbf{\in}$
$\mathcal{H}_{\left(  \gamma_{1},\gamma_{2},\sigma,\zeta\right)  },$
\[
\rho_{\left(  \gamma_{1},\gamma_{2},\sigma,\zeta\right)  }\left(  T_{k}%
M_{l}e^{2\pi i\theta}\right)  \mathbf{u}\left(  T_{s}\right)  =\mathbf{u}%
\left(  \left(  T_{k}M_{l}e^{2\pi i\theta}\right)  ^{-1}T_{s}\right)
\]
and $\mathbf{u}\left(  \left(  T_{k}M_{l}e^{2\pi i\theta}\right)  ^{-1}%
T_{s}\right)  $ is computed by following the rules defined in (\ref{space})
where
\[
T_{s}\in\left\{  T_{k_{1}},\cdots,T_{k_{\left\vert \det A\right\vert }%
}\right\}
\]
and
\[
\left\{  k_1+A\mathbb{Z}
^{d},\cdots,k_{\left\vert \det A\right\vert }+A\mathbb{Z}
^{d}\right\}
\]
is a set of representative elements of the quotient group $\frac{%
\mathbb{Z}
^{d}}{A%
\mathbb{Z}
^{d}}.$
\end{remark}

The lemma above is a standard computation of an induced representation. The
interested reader is referred to \cite{Folland}

Next, we define the set
\[
\Sigma_{\sigma}=\left\{
\begin{array}
[c]{c}%
\mathrm{Ind}_{G_{\left(  \gamma_{1},\gamma_{2},\sigma\right)  }}^{G}\left(
\chi_{\left(  \gamma_{1},\gamma_{2},\sigma\right)  }^{j}\otimes\overline
{\zeta}\right)  :\zeta\in\widehat{G_{\left(  \gamma_{1},\gamma_{2}%
,\sigma\right)  }/N}\\
\text{ }\left(  \gamma_{1},\gamma_{2},\sigma\right)  \in\Lambda_{1}%
\times\mathbf{E}_{\sigma}\times\left\{  \sigma\right\}
\end{array}
\right\}  .
\]
A more convenient description of the unitary dual of $G$ which will be helpful
when we compute the Plancherel measure is
\begin{equation}
\Sigma=%
{\displaystyle\bigcup\limits_{\sigma\in\widehat{\left[  G,G\right]  }}}
\Sigma_{\sigma}. \label{spectrum}%
\end{equation}

Now that we have a complete description of the unitary dual of the group $G$,
we will provide a computation of its \textbf{Plancherel measure. }

\begin{theorem}
\label{fibered measure}The Plancherel measure is a \textbf{fiber measure}
which is given by
\[
d\mu\left(  \rho_{\left(  \gamma_{1},\gamma_{2},\sigma,\zeta\right)  }\right)
=\frac{dm_{1}\left(  \gamma_{1}\right)  dm_{2}\left(  \gamma_{2}\right)
dm_{3}\left(  \sigma\right)  dm_{4}\left(  \zeta\right)  }{\left\vert \det
A\right\vert ^{-1}m_{2}\left(  \mathbf{E}_{\sigma}\right)  }.
\]
The measures $dm_{1}$, $dm_{2}$ are the canonical Lebesgue measures defined on
$\Lambda_{1}$ and $\Lambda_{2}$ respectively. The measure $dm_{3}$ is the
counting measure on the unitary dual of the commutator $\left[  G,G\right]  $
and $dm_{4}$ is the counting measure on the dual of the little group
$\widehat{G_{\left(  \gamma_{1},\gamma_{2},\sigma\right)  }/N}$ with weight
$\left(  G_{\left(  \gamma_{1},\gamma_{2},\sigma\right)  }:N\right)  .$
Moreover if $L$ is the left regular representation of $G,$ the direct integral
decompositon of $L$ into irreducible representations of $G$ is
\[
\int_{\Sigma}^{\oplus}\rho_{\left(  \gamma_{1},\gamma_{2},\sigma,\zeta\right)
}\otimes\mathbf{1}_{%
\mathbb{C}
^{\mathbf{n}\left(  \gamma_{1},\gamma_{2},\sigma,\zeta\right)  }}d\mu\left(
\rho_{\left(  \gamma_{1},\gamma_{2},\sigma,\zeta\right)  }\right)
\]
acting in
\[
\int_{\Sigma}^{\oplus}%
\mathbb{C}
^{\mathbf{n}\left(  \gamma_{1},\gamma_{2},\sigma,\zeta\right)  }\otimes%
\mathbb{C}
^{\mathbf{n}\left(  \gamma_{1},\gamma_{2},\sigma,\zeta\right)  }d\mu\left(
\rho_{\left(  \gamma_{1},\gamma_{2},\sigma,\zeta\right)  }\right)
\]
and
\[
\mathbf{n}\left(  \gamma_{1},\gamma_{2},\sigma,\zeta\right)  =\mathrm{card}%
\left(  G/G_{\left(  \gamma_{1},\gamma_{2},\sigma\right)  }\right)  .
\]

\end{theorem}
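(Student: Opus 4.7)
The plan is to deduce the formula directly from the Kleppner--Lipsman Plancherel formula for type I group extensions (see \cite{Lipsman1}) applied to the short exact sequence $1\to N\to G\to G/N\to 1$. First I would verify the hypotheses: by Proposition \ref{normal}, $N$ is abelian (hence type I), and by Lemma \ref{finite}, $G/N$ is finite, so $G$ is a compact extension of an abelian group. Consequently, $G$ is type I, all $G$-orbits in $\widehat{N}$ are finite, and $N$ is regularly embedded. Mackey's Proposition \ref{Mackey} then identifies $\widehat{G}$ with the set $\Sigma$ already constructed in (\ref{spectrum}), and Kleppner--Lipsman gives the Plancherel measure on $\widehat{G}$ as a fibration built from three ingredients: the Plancherel measure on $\widehat{N}$, a cross-section of the $G$-orbits, and the Plancherel measure on each finite little group $G_{(\gamma_{1},\gamma_{2},\sigma)}/N$.

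Next I would identify these three measures. Since $N\cong A\mathbb{Z}^{d}\times B\mathbb{Z}^{d}\times \mathbb{Z}_{m}$ is discrete abelian, its Plancherel measure on $\widehat{N}$ is the normalized Haar measure on the compact group $\widehat{N}\cong \Lambda_{1}\times\Lambda_{2}\times\widehat{\mathbb{Z}_{m}}$, which pulled back is a rescaling of $dm_{1}(\gamma_{1})\,dm_{2}(\gamma_{2})\,dm_{3}(\sigma)$ with factors coming from the covolumes $m_{1}(\Lambda_{1})=|\det A|^{-1}$ and $m_{2}(\Lambda_{2})=|\det B|^{-1}$. By Lemma \ref{orbit}, the $G$-orbits lie in slices of constant $(\gamma_{1},\sigma)$, and inside such a slice the orbit of $\gamma_{2}$ consists of the reductions of $\gamma_{2}+\sigma s$ to $\Lambda_{2}$ for $s\in \mathbb{Z}^{d}$. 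The defining condition $\bigcup_{s\in \mathbb{Z}^{d}}\rho(\mathbf{E}_{\sigma}+\sigma s)=\Lambda_{2}$ makes $\mathbf{E}_{\sigma}$ a measurable cross-section on that slice, with each orbit meeting $\mathbf{E}_{\sigma}$ exactly $\mathrm{card}(G/G_{(\gamma_{1},\gamma_{2},\sigma)})$ times; equivalently, the orbital-volume factor entering the disintegration is $m_{2}(\mathbf{E}_{\sigma})$.

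For the little group, $G_{(\gamma_{1},\gamma_{2},\sigma)}/N$ is a finite abelian subgroup of $G/N\cong \mathbb{Z}^{d}/A\mathbb{Z}^{d}$ of order $(G_{(\gamma_{1},\gamma_{2},\sigma)}:N)$, whose unitary dual consists of one-dimensional characters $\zeta$. Its Plancherel measure is the normalized counting measure, so writing $dm_{4}$ for the \emph{unnormalized} counting measure on $\widehat{G_{(\gamma_{1},\gamma_{2},\sigma)}/N}$ produces precisely the weight $(G_{(\gamma_{1},\gamma_{2},\sigma)}:N)$ advertised in the statement. Assembling the three ingredients through Kleppner--Lipsman then yields
\[
d\mu\bigl(\rho_{(\gamma_{1},\gamma_{2},\sigma,\zeta)}\bigr)=\frac{dm_{1}(\gamma_{1})\,dm_{2}(\gamma_{2})\,dm_{3}(\sigma)\,dm_{4}(\zeta)}{|\det A|^{-1}\,m_{2}(\mathbf{E}_{\sigma})},
\]
with the denominator $|\det A|^{-1}m_{2}(\mathbf{E}_{\sigma})$ encoding both the normalization of Haar measure in the $\gamma_{1}$-variable and the push-forward from $\Lambda_{2}$ to the cross-section $\mathbf{E}_{\sigma}$.

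The direct integral decomposition of $L$ then follows from the general Plancherel identity $\mathcal{P}L\mathcal{P}^{-1}=\int_{\widehat{G}}^{\oplus}\pi\otimes\mathbf{1}_{\mathcal{H}_{\pi}}\,d\mu(\pi)$, combined with the identification (\ref{space}) of the carrier space of each monomial $\rho_{(\gamma_{1},\gamma_{2},\sigma,\zeta)}$ with $\mathbb{C}^{\mathrm{card}(G/G_{(\gamma_{1},\gamma_{2},\sigma)})}$, so that $\mathbf{n}(\gamma_{1},\gamma_{2},\sigma,\zeta)=\dim \mathcal{H}_{\rho_{(\gamma_{1},\gamma_{2},\sigma,\zeta)}}=\mathrm{card}(G/G_{(\gamma_{1},\gamma_{2},\sigma)})$. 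The main obstacle I anticipate is the careful bookkeeping of the three competing normalizations---Lebesgue on the fundamental domains $\Lambda_{1},\Lambda_{2},\mathbf{E}_{\sigma}$; counting measures on the finite duals $\widehat{[G,G]}$ and $\widehat{G_{(\gamma_{1},\gamma_{2},\sigma)}/N}$; and the counting (Haar) measure on the discrete group $G$ itself---so as to arrive at the precise constant $|\det A|^{-1}m_{2}(\mathbf{E}_{\sigma})$ in the denominator rather than some other equivalent combination of volumes and indices.
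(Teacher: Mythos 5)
Your proposal takes essentially the same approach as the paper: the paper's entire proof of Theorem \ref{fibered measure} is a one-line appeal to the abstract Plancherel formula for group extensions in \cite{Lipsman1} (II, Theorem 2.3), with the weight ``obtained by some normalization.'' Your write-up is correct and in fact supplies more detail than the paper itself, since you explicitly verify the hypotheses (via Proposition \ref{normal}, Lemma \ref{finite}, and regular embedding) and track the normalization constants that the paper leaves implicit.
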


\begin{proof}
The theorem above is an application of the abstract case given in
\cite{Lipsman1} II (Theorem 2.3), and the precise weight of the Plancherel
measure is obtained by some normalization.
\end{proof}

Let us suppose that $B\ $is in $GL\left(  d,%
\mathbb{Q}
\right)  -GL\left(  d,%
\mathbb{Z}
\right)  .$ Let $\varphi$ be any type I representation of $G.$ Then, there is
a unique measure, $\overline{\mu}$ defined on the spectral set $\Sigma$ such
that $\varphi$ is unitarily equivalent to
\begin{equation}
\int_{\Sigma}^{\oplus}\rho_{\left(  \gamma_{1},\gamma_{2},\sigma,\zeta\right)
}\otimes1_{%
\mathbb{C}
^{\mathbf{m}\left(  \gamma_{1},\gamma_{2},\sigma,\zeta\right)  }}%
d\overline{\mu}\left(  \rho_{\left(  \gamma_{1},\gamma_{2},\sigma
,\zeta\right)  }\right)  \label{dir}%
\end{equation}
acting in
\[
\int_{\Sigma}^{\oplus}%
\mathbb{C}
^{\mathbf{n}\left(  \gamma_{1},\gamma_{2},\sigma,\zeta\right)  }\otimes%
\mathbb{C}
^{\mathbf{m}\left(  \gamma_{1},\gamma_{2},\sigma,\zeta\right)  }d\overline
{\mu}\left(  \rho_{\left(  \gamma_{1},\gamma_{2},\sigma,\zeta\right)
}\right)
\]
where $\mathbf{m}$ is the multiplicity function of the irreducible
representations occurring in the decomposition of $\varphi.$

\begin{proposition}
The representation $\varphi$ is admissible if and only if

\begin{enumerate}
\item $d\overline{\mu}\left(  \rho_{\left(  \gamma_{1},\gamma_{2},\sigma
,\zeta\right)  }\right)  $ is absolutely continuous with respect to the
Plancherel measure of $G.$

\item $\mathbf{m}\left(  \gamma_{1},\gamma_{2},\sigma,\zeta\right)  \leq$
$\mathrm{card}\left(  G/G_{\left(  \gamma_{1},\gamma_{2},\sigma\right)
}\right)  .$
\end{enumerate}
\end{proposition}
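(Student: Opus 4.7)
The plan is to derive this admissibility characterization directly from two ingredients already in place: F\"uhr's general admissibility criterion for type $I$ unimodular groups (recalled in the preliminaries) together with the explicit Plancherel decomposition of $L$ obtained in Theorem \ref{fibered measure}. Since $G$ is discrete and hence unimodular, and is shown to be type $I$ in Lemma \ref{GG}(b), F\"uhr's result applies and asserts that $\varphi$ is admissible if and only if $\varphi$ is unitarily equivalent to a subrepresentation of the left regular representation $L$ (with the multiplicity function in its central decomposition integrable against the Plancherel measure).

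Next, I would match the direct integral realization of $\varphi$ in (\ref{dir}) against the decomposition of $L$ in Theorem \ref{fibered measure}. Both integrals are written over the same parametrizing space $\Sigma$ and use the same family of irreducibles $\{\rho_{(\gamma_1,\gamma_2,\sigma,\zeta)}\}$, so the uniqueness of central decomposition for type $I$ representations forces the embedding $\varphi \hookrightarrow L$ to exist precisely when
\[
\overline{\mu} \ll \mu \quad \text{and} \quad \mathbf{m}(\gamma_1,\gamma_2,\sigma,\zeta) \leq \mathbf{n}(\gamma_1,\gamma_2,\sigma,\zeta) \text{ a.e.}
\]
Theorem \ref{fibered measure} identifies the Plancherel multiplicity as $\mathbf{n}(\gamma_1,\gamma_2,\sigma,\zeta) = \mathrm{card}(G/G_{(\gamma_1,\gamma_2,\sigma)})$, so these conditions are exactly (a) and (b) of the proposition.

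I expect the main point, rather than a technical obstacle, to be the translation between ``$\varphi$ is a subrepresentation of $L$'' and the pointwise domination $\mathbf{m} \leq \mathbf{n}$ together with absolute continuity of the underlying measures. This translation is the standard multiplicity-theoretic characterization of subrepresentations in central decompositions of type $I$ representations, and it hinges on the identification of the carrier Hilbert space $\mathcal{H}_{(\gamma_1,\gamma_2,\sigma,\zeta)}$ in (\ref{space}) with $\mathbb{C}^{\mathrm{card}(G/G_{(\gamma_1,\gamma_2,\sigma)})}$, which places an intrinsic ceiling on the multiplicity that any subrepresentation of $L$ can accommodate at $\rho_{(\gamma_1,\gamma_2,\sigma,\zeta)}$. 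Once this identification is granted, both directions of the stated equivalence follow immediately: if $\varphi$ is admissible then it embeds in $L$ and the central decomposition of $L$ forces (a) and (b); conversely, (a) and (b) allow one to build a measurable field of partial isometries from the fibers of $\varphi$ into the fibers of $L$, assembling into an intertwining isometry that realizes $\varphi$ as a subrepresentation of $L$, whence admissibility follows from the criterion.
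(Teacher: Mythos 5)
Your proposal is correct and takes essentially the same approach as the paper, whose entire proof of this proposition is the citation ``See \cite{hartmut} page 126'': you are simply unpacking that citation, namely F\"uhr's admissibility criterion for unimodular type I groups combined with the Plancherel decomposition of Theorem \ref{fibered measure} and the identification $\mathbf{n}\left(  \gamma_{1},\gamma_{2},\sigma,\zeta\right)  =\mathrm{card}\left(  G/G_{\left(  \gamma_{1},\gamma_{2},\sigma\right)  }\right)$. The one step you leave implicit---F\"uhr's requirement that the multiplicity function be integrable over the spectrum---is automatic here because $G$ is discrete: the Plancherel measure of Theorem \ref{fibered measure} is finite and $\mathbf{m}\left(  \gamma_{1},\gamma_{2},\sigma,\zeta\right)  \leq\mathrm{card}\left(  G/G_{\left(  \gamma_{1},\gamma_{2},\sigma\right)  }\right)  \leq\left\vert \det A\right\vert$, so the ``if'' direction closes without further argument.
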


\begin{proof}
See \cite{hartmut} page 126
\end{proof}

\begin{remark}
If $B\ $is in $GL\left(  d,%
\mathbb{Q}
\right)  -GL\left(  d,%
\mathbb{Z}
\right)  $ and $\left\vert \det B\right\vert \leq1$ and the Gabor
representation $\pi$ (\ref{repPi}) is unitarily equivalent to (\ref{dir}) then
$\pi$ is admissible and the Proposition above is applicable.
\end{remark}

\begin{proof}
This remark is just an application of the density condition given in Lemma
\ref{density}.
\end{proof}

\section{Non-Type I Groups}

In general if $B$ has at least one non-rational entry, then the commutator
subgroup of $G$ is a infinite abelian group. In this section, we will
consider the case where $d=1,$ and
\[
G=\left\langle T_{k},M_{l}\text{ }|\text{ }k\in%
\mathbb{Z}
,l\in\alpha%
\mathbb{Z}
\right\rangle \text{ }%
\]
where $\alpha$ is irrational positive number. Unfortunately, the Mackey
machine will not be applicable here, and we will have to rely on different
techniques to obtain a decomposition of the left regular representation in
this case. Let $\mathbb{H}$ be the three-dimensional connected, simply
connected Heisenberg group. We define
\[
\Gamma=\left\{  \mathbf{P}_{l,m,k}=\left[
\begin{array}
[c]{ccc}%
1 & m & l\\
0 & 1 & k\\
0 & 0 & 1
\end{array}
\right]  :\left(  m,k,l\right)  \in%
\mathbb{Z}
^{3}\right\}  .
\]

\begin{lemma}
There is a faithful representation $\Theta^{\alpha}$ of $\Gamma$ such that
\[
\Theta^{\alpha}\left(  \mathbf{P}_{l,0,0}\right)  =\chi_{\alpha}\left(
l\right)  ,\text{ }\Theta^{\alpha}\left(  \mathbf{P}_{0,m,0}\right)
=T_{m},\text{ and }\Theta^{\alpha}\left(  \mathbf{P}_{0,0,k}\right)
=M_{k\alpha}.
\]

\end{lemma}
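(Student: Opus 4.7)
The plan is to treat $\Gamma$ as the standard integer Heisenberg group. A direct matrix computation gives the multiplication rule
\[
\mathbf{P}_{l_1,m_1,k_1}\mathbf{P}_{l_2,m_2,k_2} \;=\; \mathbf{P}_{l_1+l_2+m_1 k_2,\,m_1+m_2,\,k_1+k_2},
\]
from which $\Gamma$ is seen to be generated by $X=\mathbf{P}_{0,0,1}$, $Y=\mathbf{P}_{0,1,0}$, $Z=\mathbf{P}_{1,0,0}$ with $Z$ central, $YXY^{-1}X^{-1}=Z$, and unique normal form $\mathbf{P}_{l,m,k}=Z^{l-mk}Y^m X^k$. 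By this presentation, to define $\Theta^\alpha$ it suffices to specify the images of $X,Y,Z$ and to verify these images satisfy the same relations. I will set $\Theta^\alpha(X)=M_\alpha$, $\Theta^\alpha(Y)=T_1$, $\Theta^\alpha(Z)=\chi_\alpha(1)\,\mathbf{1}$; centrality of $\chi_\alpha(1)$ is automatic because it is a scalar operator.

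The remaining relation is the Weyl commutation identity, which by equation~\eqref{action} gives $T_1M_\alpha T_1^{-1}M_\alpha^{-1}$ equal to a central scalar $\chi_\alpha(\pm 1)\,\mathbf{1}$ matching $\Theta^\alpha(Z^{\pm 1})$ once the sign convention for the Heisenberg bracket is fixed. Hence $\Theta^\alpha$ extends uniquely to a homomorphism $\Gamma\to U(L^2(\mathbb{R}))$, and on the three coordinate subgroups it produces precisely $\chi_\alpha(l)$, $T_m$, $M_{k\alpha}$, as claimed. The explicit closed form on a general element is
\[
\Theta^\alpha(\mathbf{P}_{l,m,k}) \;=\; \chi_\alpha(l-mk)\,T_m\,M_{k\alpha},
\]
obtained by substituting the normal form into the assignments on $X$, $Y$, $Z$.

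For faithfulness, suppose $\mathbf{P}_{l,m,k}\in\ker\Theta^\alpha$, so $\chi_\alpha(l-mk)\,T_m M_{k\alpha} = \mathbf{1}$ on $L^2(\mathbb{R})$. Testing against a compactly supported $f$ forces $m=0$, since otherwise the support is translated by $m\neq 0$. With $m=0$ the operator becomes pointwise multiplication by $\chi_\alpha(l)\,e^{2\pi i k\alpha x}$, which equals $1$ almost everywhere only when $k=0$ and $\chi_\alpha(l)=1$. Here the one genuinely nontrivial point enters: $\chi_\alpha(l)=e^{2\pi i\alpha l}=1$ with $l\in\mathbb{Z}$ forces $\alpha l\in\mathbb{Z}$, and the irrationality hypothesis $\alpha\in\mathbb{R}\setminus\mathbb{Q}$ then forces $l=0$. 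This is the only place the hypothesis on $\alpha$ is used; the rest of the argument is the (routine but delicate) bookkeeping of matching the Heisenberg two-cocycle $(m_1,k_2)\mapsto m_1k_2$ to the Weyl commutation cocycle of the pair $(T,M)$, which is the main technical obstacle.
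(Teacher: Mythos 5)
Your strategy---present $\Gamma$ by generators $X=\mathbf{P}_{0,0,1}$, $Y=\mathbf{P}_{0,1,0}$, $Z=\mathbf{P}_{1,0,0}$ with $Z$ central and $YXY^{-1}X^{-1}=Z$, assign images, verify the relations, then compute the kernel directly---is sound and genuinely different from the paper's proof, which simply takes $\Theta^{\alpha}$ to be the restriction to $\Gamma$ of the Schr\"odinger representation of $\mathbb{H}$ and asserts triviality of the kernel; your kernel computation is precisely the substance the paper leaves implicit. However, there is a real error at the step you wave off as ``fixing the sign convention,'' and it cannot be waved off: every convention is already pinned down, by your own multiplication rule and by the paper's definitions of $T_k$, $M_l$, $\chi_x$. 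Your rule gives $YXY^{-1}X^{-1}=Z$, while equation (\ref{action}) gives
\[
\Theta^{\alpha}\left(  Y\right)  \Theta^{\alpha}\left(  X\right)
\Theta^{\alpha}\left(  Y\right)  ^{-1}\Theta^{\alpha}\left(  X\right)  ^{-1}
=T_{1}M_{\alpha}T_{1}^{-1}M_{\alpha}^{-1}=e^{-2\pi i\alpha}\mathbf{1}
=\chi_{\alpha}\left(  -1\right)  \mathbf{1}.
\]
Since $\alpha$ is irrational, $\chi_{\alpha}\left(  -1\right)  \neq
\chi_{\alpha}\left(  1\right)  $, so the assignment $\Theta^{\alpha}\left(
Z\right)  =\chi_{\alpha}\left(  1\right)  \mathbf{1}$ violates the defining
relation and does \emph{not} extend to a homomorphism. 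Concretely, your closed
form $\Theta^{\alpha}\left(  \mathbf{P}_{l,m,k}\right)  =\chi_{\alpha}\left(
l-mk\right)  T_{m}M_{k\alpha}$ is not multiplicative: comparing the product of
the images of $\mathbf{P}_{l_{1},m_{1},k_{1}}$ and $\mathbf{P}_{l_{2}
,m_{2},k_{2}}$ with the image of $\mathbf{P}_{l_{1}+l_{2}+m_{1}k_{2}
,m_{1}+m_{2},k_{1}+k_{2}}$ leaves a discrepancy factor $\chi_{\alpha}\left(
2m_{2}k_{1}\right)  \neq1$.

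The repair is small but must be made: set $\Theta^{\alpha}\left(  Z\right)
=\chi_{\alpha}\left(  -1\right)  \mathbf{1}$, equivalently $\Theta^{\alpha
}\left(  \mathbf{P}_{l,m,k}\right)  =\chi_{\alpha}\left(  mk-l\right)
T_{m}M_{k\alpha}$, which one checks is multiplicative; your faithfulness
argument (translation forces $m=0$, modulation forces $k=0$, and irrationality
of $\alpha$ forces $l=0$) then goes through verbatim, with the hypothesis on
$\alpha$ entering exactly where you say. Note that your computation in fact
exposes a sign slip in the lemma as printed: the paper's own $\Theta^{\alpha
}=\pi_{\alpha}|\Gamma$, with $\pi_{\lambda}\left(  \mathbf{P}_{z,x,y}\right)
f\left(  t\right)  =e^{2\pi iz\lambda}e^{-2\pi i\lambda yt}f\left(
t-x\right)  $ as defined later in the paper, sends $\mathbf{P}_{0,0,k}\mapsto
M_{-k\alpha}$, not $M_{k\alpha}$. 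This is harmless for the intended conclusion,
since $\left\{  M_{-k\alpha}:k\in\mathbb{Z}\right\}  =\left\{  M_{k\alpha
}:k\in\mathbb{Z}\right\}  $ and so $G\cong\Gamma$ still follows; but as
written, the homomorphism you display does not exist, so you should either
prove the sign-corrected formula or explicitly relabel $k\mapsto-k$.
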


\begin{proof}
$\Theta^{\alpha}$ is the restriction of an irreducible infinite-dimensional
representation of the Heisenberg group \cite{Corwin} to the lattice $\Gamma.$
Since
\[
\ker\Theta^{\alpha}=\left\{\left[
\begin{array}
[c]{ccc}%
1 & 0 & 0\\
0 & 1 & 0\\
0 & 0 & 1
\end{array}
\right]\right\}
\]
the representation $\Theta^{\alpha}$ is clearly faithful.
\end{proof}

Thus, $G\cong\Gamma$ via $\Theta^{\alpha}$ and for our purpose, it is more
convenient to work with $\Gamma.$ We define
\[
\Gamma_{1}=\left\{  \mathbf{P}_{0,m,k}:\left(  m,k\right)  \in%
\mathbb{Z}
^{2}\right\}  .
\]
Let $L_{\mathbb{H}}$ be the left regular representation of the simply
connected, connected Heisenberg group
\[
\mathbb{H=}\left\{  \left[
\begin{array}
[c]{ccc}%
1 & x & z\\
0 & 1 & y\\
0 & 0 & 1
\end{array}
\right]  :\left(  z,y,x\right)  \in%
\mathbb{R}
^{3}\right\}  .
\]
In fact, it is not too hard to show that $\Gamma$ is a lattice subgroup of
$\mathbb{H}$. Let $\mathcal{P}$ be the Plancherel transform of the Heisenberg
group. We recall that
\[
\mathcal{P}:L^{2}\left(  \mathbb{H}\right)  \rightarrow\int_{%
\mathbb{R}
^{\ast}}^{\oplus}L^{2}\left(
\mathbb{R}
\right)  \otimes L^{2}\left(
\mathbb{R}
\right)  \left\vert \lambda\right\vert d\lambda
\]
where the Fourier transform is defined on $L^{2}(\mathbb{H})\cap
L^{1}(\mathbb{H})$ by
\[
\mathcal{P}\left(  f\right)  \left(  \lambda\right)  =\int_{%
\mathbb{R}
^{\ast}}\pi_{\lambda}\left(  n\right)  f\left(  n\right)  dn
\]
where
\[
\pi_{\lambda}\left(  n\right)  f\left(  t\right)  =\pi_{\lambda}\left(
\mathbf{P}_{z,x,y}\right)  f\left(  t\right)  =e^{2\pi iz\lambda}e^{-2\pi
i\lambda yt}f\left(  t-x\right)  ,
\]
and the Plancherel transform is the extension of the Fourier transform to
$L^{2}(\mathbb{H})$ inducing the equality
\[
\left\Vert f\right\Vert _{L^{2}\left(  \mathbb{H}\right)  }^{2}=\int_{%
\mathbb{R}
^{\ast}}\left\Vert \mathcal{P}\left(  f\right)  \left(  \lambda\right)
\right\Vert _{\mathcal{HS}}^{2}\left\vert \lambda\right\vert d\lambda.
\]
In fact, $||\cdot||_{\mathcal{HS}}$ denotes the Hilbert Schmidt norm on
$L^{2}\left(  \mathbb{R}\right)  \otimes L^{2}\left(  \mathbb{R}\right)  $.
Let $u\otimes v$ and $w\otimes y$ be rank-one operators in $L^{2}\left(
\mathbb{R}\right)  \otimes L^{2}\left(  \mathbb{R}\right)  .$ We have
\[
\left\langle u\otimes v,w\otimes y\right\rangle _{\mathcal{HS}}=\left\langle
u,w\right\rangle _{L^{2}\left(  \mathbb{R}\right)  }\left\langle
v,y\right\rangle _{L^{2}\left(  \mathbb{R}\right)  }.
\]
It is well-known that%
\[
L_{\mathbb{H}}\cong\mathcal{P\circ}\text{ }L_{\mathbb{H}}\circ\mathcal{P}%
^{-1}=\int_{%
\mathbb{R}
^{\ast}}^{\oplus}\pi_{\lambda}\otimes\mathbf{1}_{L^{2}\left(  \mathbb{R}%
\right)  }\left\vert \lambda\right\vert d\lambda,
\]
where $\mathbf{1}_{L^{2}\left(  \mathbb{R}\right)  }$ is the identity operator
on $L^{2}\left(  \mathbb{R}\right)  ,$ and for a.e. $\lambda\in%
\mathbb{R}
^{\ast},$
\[
\mathcal{P}(L_{\mathbb{H}}(x)\phi)(\lambda)=\pi_{\lambda}(x)\circ
\mathcal{P}\phi(\lambda).
\]
Let $\left(\mathbf{u}_{\lambda}\right)_{\lambda\in\left[  -1,0\right)  \cup\left(
0,1\right] }$ be a measurable field of unit vectors in $L^{2}\left(
\mathbb{R}
\right)  .$ We define two \textbf{left-invariant multiplicity-free subspaces}
of $L^{2}\left(  \mathbb{H}\right)  $ such that
\begin{align*}
\mathbf{H}\text{ }^{+}\text{ }  &  \mathbf{=}\text{ }\mathcal{P}^{-1}\left(
\int_{\left(  0,1\right]  }^{\oplus}L^{2}\left(
\mathbb{R}
\right)  \otimes\mathbf{u}_{\lambda}\left\vert \lambda\right\vert
d\lambda\right)  ,\\
\mathbf{H}\text{ }^{-}\text{ }  &  \mathbf{=}\text{ }\mathcal{P}^{-1}\left(
\int_{\left[  -1,0\right)  }^{\oplus}L^{2}\left(
\mathbb{R}
\right)  \otimes\mathbf{u}_{\lambda}\left\vert \lambda\right\vert
d\lambda\right)
\end{align*}
and $\mathbf{H}$ $^{+},\mathbf{H}$ $^{-}$ are mutually orthogonal. The
following lemma has also been proved in more general terms in \cite{Oussa}.
However the proof is short enough to be presented again in this section.

\begin{lemma}
The representation $\left(  L_{\mathbb{H}}|\Gamma,\mathbf{H}\text{ }%
^{+}\right)  $ is cyclic and $\mathbf{H}$ $^{+}$ admits a Parseval frame of
the type $L_{\mathbb{H}}\left(  \Gamma\right)  f$ with $\left\Vert
f\right\Vert _{L^{2}(\mathbb{H})}^{2}=\frac{1}{2}.$
\end{lemma}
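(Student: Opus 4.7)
The plan is to transport everything through the Plancherel transform $\mathcal{P}$ so that the problem becomes a fiberwise Gabor-frame question on $(0,1]$. Under $\mathcal{P}$, the subspace $\mathbf{H}^{+}$ is isometric to $\int_{(0,1]}^{\oplus} L^{2}(\mathbb{R})\otimes\mathbf{u}_{\lambda}\,|\lambda|\,d\lambda$, and on each fiber the action of $\gamma=\mathbf{P}_{l,m,k}\in\Gamma$ is given by $\pi_{\lambda}(\gamma)\otimes\mathbf{1}$, i.e.\ the Schr\"odinger representation restricted to the Gabor-type lattice with translations in $\mathbb{Z}$ and modulations in $\lambda\mathbb{Z}$, up to the central phase $e^{2\pi i l\lambda}$. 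Since $\lambda\in(0,1]$, the lattice $\mathbb{Z}\times\lambda\mathbb{Z}$ has volume $\lambda\le 1$, so Lemma \ref{density} supplies, for each such $\lambda$, a Parseval Gabor frame generator $\phi_{\lambda}\in L^{2}(\mathbb{R})$, and Lemma \ref{volume} forces $\|\phi_{\lambda}\|^{2}=\lambda$.

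The candidate admissible vector is then $f\in\mathbf{H}^{+}$ defined by $\mathcal{P}f(\lambda)=f_{\lambda}\otimes\mathbf{u}_{\lambda}$ with $f_{\lambda}:=\lambda^{-1/2}\phi_{\lambda}$, so that $\|f_{\lambda}\|=1$ on the nose and the fiberwise orbit satisfies $\sum_{m,k}|\langle g_{\lambda},\pi_{\lambda}(\mathbf{P}_{0,m,k})f_{\lambda}\rangle|^{2}=\lambda^{-1}\|g_{\lambda}\|^{2}$. The normalization of $\|f\|_{L^{2}(\mathbb{H})}^{2}$ is then immediate:
\[
\|f\|_{L^{2}(\mathbb{H})}^{2}=\int_{0}^{1}\|f_{\lambda}\|^{2}\,\lambda\,d\lambda=\int_{0}^{1}\lambda\,d\lambda=\tfrac{1}{2}.
\]

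For the Parseval property, I would split the sum $\sum_{\gamma\in\Gamma}|\langle g,L_{\mathbb{H}}(\gamma)f\rangle|^{2}$ as a double sum over $(m,k)\in\mathbb{Z}^{2}$ and $l\in\mathbb{Z}$. Using $\pi_{\lambda}(\mathbf{P}_{l,m,k})=e^{2\pi il\lambda}\pi_{\lambda}(\mathbf{P}_{0,m,k})$ and the Plancherel pairing on $\mathbb{H}$, the matrix coefficient becomes
\[
\langle g,L_{\mathbb{H}}(\mathbf{P}_{l,m,k})f\rangle=\int_{0}^{1}e^{-2\pi il\lambda}\,\langle g_{\lambda},\pi_{\lambda}(\mathbf{P}_{0,m,k})f_{\lambda}\rangle\,\lambda\,d\lambda,
\]
which is the $l$-th Fourier coefficient of the $L^{2}[0,1]$ function $\lambda\mapsto\langle g_{\lambda},\pi_{\lambda}(\mathbf{P}_{0,m,k})f_{\lambda}\rangle\,\lambda$. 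Summing over $l$ and applying Parseval on the circle absorbs the integral into $\int_{0}^{1}\lambda^{2}|\langle g_{\lambda},\pi_{\lambda}(\mathbf{P}_{0,m,k})f_{\lambda}\rangle|^{2}\,d\lambda$; then summing over $(m,k)$ and invoking the fiberwise Gabor Parseval identity gives $\int_{0}^{1}\lambda^{2}\cdot\lambda^{-1}\|g_{\lambda}\|^{2}\,d\lambda=\|g\|_{\mathbf{H}^{+}}^{2}$, which is exactly the Parseval identity. Cyclicity of $(L_{\mathbb{H}}|\Gamma,\mathbf{H}^{+})$ is then automatic, because any $g\in\mathbf{H}^{+}$ orthogonal to every $L_{\mathbb{H}}(\gamma)f$ satisfies $\|g\|^{2}=0$.

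The only delicate step is the measurable selection $\lambda\mapsto\phi_{\lambda}$: I would invoke a standard Zak-transform construction (for example, $\phi_{\lambda}=\sqrt{\lambda}\,\chi_{[0,1/\lambda)}$ when $1/\lambda\in\mathbb{Z}$, extended by a measurable partition of $(0,1]$ into subintervals with explicit generators, or a Jordan-type construction in the Zak domain) which gives a measurable field of Parseval generators. This measurability issue is routine and is the only place where one must be careful; all the rest is bookkeeping around the Plancherel decomposition together with Fourier series on $[0,1]$.
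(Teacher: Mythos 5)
Your proof is correct and is essentially the paper's own argument: both pass through the Plancherel transform, use Lemma \ref{density} to produce fiberwise Parseval Gabor generators for the lattice $\mathbb{Z}\times\lambda\mathbb{Z}$ of volume $\lambda\le 1$ (renormalized to unit fiber vectors $\lambda^{-1/2}\phi_\lambda$), collapse the sum over the central variable $l$ via Parseval's identity for $\{e^{2\pi i l\lambda}\}$ on $(0,1]$, and obtain $\|f\|_{L^2(\mathbb{H})}^{2}=\int_{0}^{1}\lambda\,d\lambda=\frac{1}{2}$ exactly as the paper does from Lemma \ref{volume}; indeed you are more careful than the paper in flagging the measurable-selection issue for $\lambda\mapsto\phi_\lambda$. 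One small slip in your parenthetical example: $\sqrt{\lambda}\,\chi_{[0,1/\lambda)}$ is the unit-norm generator of a tight frame with constant $1/\lambda$ (that is, it is your $f_\lambda$, not $\phi_\lambda$), whereas the Parseval generator is $\lambda\,\chi_{[0,1/\lambda)}$, whose norm squared is $\lambda$ as Lemma \ref{volume} requires.
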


\begin{proof}
Let $f,\phi\in L^{2}(\mathbb{H})$.%
\begin{align*}
&
{\displaystyle\sum\limits_{\gamma\in\Gamma}}
\left\vert \left\langle \phi,L_{\mathbb{H}}\left(  \gamma\right)
f\right\rangle _{L^{2}(\mathbb{H})}\right\vert ^{2}\\
&  =%
{\displaystyle\sum\limits_{\gamma\in\Gamma}}
\left\vert \int_{\left(  0,1\right]  }\left\langle \mathcal{P}\phi
(\lambda),\pi_{\lambda}\left(  \gamma\right)  \mathcal{P}f(\lambda
)\right\rangle _{HS}\left\vert \lambda\right\vert d\lambda\right\vert ^{2}\\
&  =%
{\displaystyle\sum\limits_{\gamma_{1}\in\Gamma_{1}}}
{\displaystyle\sum\limits_{j\in\mathbb{Z}}}
\left\vert \int_{\left(  0,1\right]  }e^{2\pi ij\lambda}\left\langle
\mathcal{P}\phi(\lambda),\pi_{\lambda}\left(  \gamma_{1}\right)
\mathcal{P}f(\lambda)\right\rangle _{HS}\left\vert \lambda\right\vert
d\lambda\right\vert ^{2}.
\end{align*}
Since $\left\{  e^{2\pi ij\lambda}:j\in%
\mathbb{Z}
\right\}  $ forms a Parseval frame for $L^{2}\left(  \left(  0,1\right]
\right)  ,$ we have
\begin{align*}
&
{\displaystyle\sum\limits_{\gamma\in\Gamma}}
\left\vert \left\langle \phi,L_{\mathbb{H}}\left(  \gamma\right)
f\right\rangle _{L^{2}(\mathbb{H})}\right\vert ^{2}\\
&  =%
{\displaystyle\sum\limits_{\gamma_{1}\in\Gamma_{1}}}
\int_{\left(  0,1\right]  }\left\vert \left\langle \mathcal{P}\phi
(\lambda),\pi_{\lambda}\left(  \gamma_{1}\right)  \mathcal{P}\left(  f\right)
(\lambda)\left\vert \lambda\right\vert ^{1/2}\right\rangle _{HS}\right\vert
^{2}\left\vert \lambda\right\vert d\lambda.
\end{align*}
Let $\mathcal{P}\left(  f\right)  (\lambda)\left\vert \lambda\right\vert
^{1/2}=\left\vert \lambda\right\vert ^{1/2}\mathbf{w}_{\lambda}\otimes
\mathbf{u}_{\lambda}\in L^{2}\left(
\mathbb{R}
\right)  \otimes\mathbf{u}_{\lambda}$ a.e. Then
\begin{equation}%
{\displaystyle\sum\limits_{\gamma\in\Gamma}}
\left\vert \left\langle \phi,L_{\mathbb{H}}\left(  \gamma\right)
f\right\rangle _{L^{2}(\mathbb{H})}\right\vert ^{2}=\int_{\left(  0,1\right]
}%
{\displaystyle\sum\limits_{\gamma_{1}\in\Gamma_{1}}}
\left\vert \left\langle s\left(  \lambda\right)  ,\pi_{\lambda}\left(
\gamma_{1}\right)  \left(  \left\vert \lambda\right\vert ^{1/2}\mathbf{w}%
_{\lambda}\right)  \right\rangle _{L^{2}\left(
\mathbb{R}
\right)  }\right\vert ^{2}\left\vert \lambda\right\vert d\lambda\label{line2}%
\end{equation}
where $\mathcal{P}\phi(\lambda)=\mathbf{s}_{\lambda}\otimes\mathbf{u}%
_{\lambda}.$ Notice that by definition $\pi_{\lambda}\left(  \gamma
_{1}\right)  \mathbf{f}=\exp\left(  2\pi i\lambda jt\right)  \mathbf{f}\left(
t-k\right)  $ where $\left(  \lambda j,k\right)  \in\lambda%
\mathbb{Z}
\times%
\mathbb{Z}
$ with $\lambda\in\left(  0,1\right]  .$ By the density condition given in
Lemma \ref{density}, it is possible to find $\mathbf{v}_{\lambda}$ such that
the system
\[
\left\{  \pi_{\lambda}\left(  \gamma_{1}\right)  \left(  \mathbf{v}_{\lambda
}\right)  :\gamma_{1}\in\Gamma_{1}\right\}
\]
is a Parseval frame in $L^{2}\left(
\mathbb{R}
\right)  $ for a.e. $\lambda\in\left(  0,1\right]  .$ So let us suppose that
we pick $f\in\mathbf{H}$ $^{+}$ so that
\[
\mathcal{P}\left(  f\right)  (\lambda)=\left\vert \lambda\right\vert
^{-1/2}\mathbf{v}_{\lambda}\otimes\mathbf{u}_{\lambda}.
\]
We will later see that $f$ is indeed square-integrable. Coming back to
(\ref{line2}),
\begin{align*}%
{\displaystyle\sum\limits_{\gamma\in\Gamma}}
\left\vert \left\langle \phi,L_{\mathbb{H}}\left(  \gamma\right)
f\right\rangle _{L^{2}(\mathbb{H})}\right\vert ^{2}  &  =\int_{\left(
0,1\right]  }%
{\displaystyle\sum\limits_{\gamma_{1}\in\Gamma_{1}}}
\left\vert \left\langle \mathbf{s}_{\lambda},\pi_{\lambda}\left(  \gamma
_{1}\right)  \left(  \mathbf{v}_{\lambda}\right)  \right\rangle _{L^{2}\left(
%
\mathbb{R}
\right)  }\right\vert ^{2}\left\vert \lambda\right\vert d\lambda\\
&  =\int_{\left(  0,1\right]  }\left\Vert \mathcal{P}\phi(\lambda)\right\Vert
_{HS}^{2}\left\vert \lambda\right\vert d\lambda\\
&  =\left\Vert \phi\right\Vert _{L^{2}(\mathbb{H})}^{2}.
\end{align*}
To prove that $\left\Vert f\right\Vert _{L^{2}(\mathbb{H})}^{2}=\frac{1}{2},$
we appeal to Lemma \ref{volume} and we obtain
\begin{align*}
\left\Vert \left\vert \lambda\right\vert ^{-1/2}\mathbf{v}_{\lambda
}\right\Vert _{L^{2}\left(
\mathbb{R}
\right)  }^{2}  &  =\left\vert \lambda\right\vert ^{-1}\left\Vert
\mathbf{v}_{\lambda}\right\Vert _{L^{2}\left(
\mathbb{R}
\right)  }^{2}\\
&  =\left\vert \lambda\right\vert ^{-1}\mathrm{vol}\left(  \lambda%
\mathbb{Z}
\times%
\mathbb{Z}
\right) \\
&  =1.
\end{align*}
The above holds for almost every $\lambda\in\left(  0,1\right]  $. Next, since
$\left\Vert \mathcal{P}f(\lambda)\right\Vert _{HS}^{2}=1\ $we obtain%
\[
\left\Vert f\right\Vert _{L^{2}(\mathbb{H})}^{2}=\int_{0}^{1}\left\vert
\lambda\right\vert d\lambda=\frac{1}{2}.
\]

\end{proof}

Similarly, we also have the following lemma

\begin{lemma}
The representation $\left(  L_{\mathbb{H}}|\Gamma,\mathbf{H}\text{ }%
^{-}\right)  $ is cyclic. Moreover $\mathbf{H}^{-}$ admits a Parseval frame of
the type $L_{\mathbb{H}}\left(  \Gamma\right)  h$ and
\[
\left\Vert h\right\Vert _{L^{2}(\mathbb{H})}^{2}=\frac{1}{2}.
\]

\end{lemma}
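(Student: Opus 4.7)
The plan is to mirror the proof of the preceding lemma almost verbatim, substituting the interval $[-1,0)$ for $(0,1]$ throughout. The crucial observation is that the family $\{e^{2\pi i j\lambda}:j\in\mathbb{Z}\}$ is a Parseval frame for $L^{2}([-1,0))$ exactly as it is for $L^{2}((0,1])$, since both are fundamental domains of $\mathbb{Z}$ acting on $\mathbb{R}$ by translation. This is what will allow the sum over the central generators $\mathbf{P}_{l,0,0}\in\Gamma$ to collapse as before.

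First, I would fix $\phi,h\in\mathbf{H}^{-}$, write $\mathcal{P}\phi(\lambda)=\mathbf{s}_{\lambda}\otimes\mathbf{u}_{\lambda}$ and $\mathcal{P}h(\lambda)=\mathbf{w}_{\lambda}\otimes\mathbf{u}_{\lambda}$ for $\lambda\in[-1,0)$, and expand
\[
\sum_{\gamma\in\Gamma}\bigl|\langle\phi,L_{\mathbb{H}}(\gamma)h\rangle_{L^{2}(\mathbb{H})}\bigr|^{2}
=\sum_{\gamma_{1}\in\Gamma_{1}}\sum_{j\in\mathbb{Z}}\left|\int_{[-1,0)}e^{2\pi ij\lambda}\langle\mathcal{P}\phi(\lambda),\pi_{\lambda}(\gamma_{1})\mathcal{P}h(\lambda)\rangle_{HS}|\lambda|\,d\lambda\right|^{2}.
\]
Applying the Parseval frame property of the exponentials on $[-1,0)$ in the variable $\lambda$ reduces this to
\[
\int_{[-1,0)}\sum_{\gamma_{1}\in\Gamma_{1}}\bigl|\langle\mathbf{s}_{\lambda},\pi_{\lambda}(\gamma_{1})(|\lambda|^{1/2}\mathbf{w}_{\lambda})\rangle_{L^{2}(\mathbb{R})}\bigr|^{2}|\lambda|\,d\lambda.
\]

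Next, for a.e.\ $\lambda\in[-1,0)$ the lattice $\lambda\mathbb{Z}\times\mathbb{Z}$ has volume $|\lambda|\leq 1$, so Lemma \ref{density} supplies a measurable field $\lambda\mapsto\mathbf{v}_{\lambda}\in L^{2}(\mathbb{R})$ such that $\{\pi_{\lambda}(\gamma_{1})\mathbf{v}_{\lambda}:\gamma_{1}\in\Gamma_{1}\}$ is a Parseval frame in $L^{2}(\mathbb{R})$. I would then define $h\in\mathbf{H}^{-}$ by setting $\mathcal{P}h(\lambda)=|\lambda|^{-1/2}\mathbf{v}_{\lambda}\otimes\mathbf{u}_{\lambda}$ for $\lambda\in[-1,0)$, so that the inner sum collapses to $\|\mathcal{P}\phi(\lambda)\|_{HS}^{2}$ and integration yields $\|\phi\|_{L^{2}(\mathbb{H})}^{2}$ by Plancherel. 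This simultaneously establishes cyclicity and the Parseval frame property.

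Finally, the norm computation is immediate: Lemma \ref{volume} gives $\|\mathbf{v}_{\lambda}\|_{L^{2}(\mathbb{R})}^{2}=\mathrm{vol}(\lambda\mathbb{Z}\times\mathbb{Z})=|\lambda|$, hence $\|\mathcal{P}h(\lambda)\|_{HS}^{2}=1$ a.e., and
\[
\|h\|_{L^{2}(\mathbb{H})}^{2}=\int_{-1}^{0}|\lambda|\,d\lambda=\tfrac{1}{2}.
\]
No real obstacle arises here, since the argument is structurally identical to the positive-spectrum case; the only point requiring care is ensuring that all measurable choices of $\mathbf{v}_{\lambda}$ and $\mathbf{u}_{\lambda}$ can be made simultaneously measurable on $[-1,0)$, which follows from the standard measurable selection arguments already implicit in the previous lemma.
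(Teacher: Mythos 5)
Your proposal is correct and is exactly what the paper intends: the paper gives no separate proof for $\mathbf{H}^{-}$, stating only that the result follows \emph{similarly}, i.e.\ by repeating the $\mathbf{H}^{+}$ argument with $[-1,0)$ in place of $(0,1]$, which is precisely what you do. All the key ingredients transfer as you note --- $\{e^{2\pi i j\lambda}:j\in\mathbb{Z}\}$ is still a Parseval frame for $L^{2}([-1,0))$, the lattice $\lambda\mathbb{Z}\times\mathbb{Z}$ still has volume $|\lambda|\leq 1$ so the density condition applies, and $\int_{-1}^{0}|\lambda|\,d\lambda=\tfrac{1}{2}$.
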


\begin{lemma}
\label{CurZ} Let $\mathbf{H=H}^{+}\oplus\mathbf{H}^{-}.$ Then there exists an
orthonormal basis of the type $L_{\mathbb{H}}\left(  \Gamma\right)  f$ for
$\mathbf{H.}$
\end{lemma}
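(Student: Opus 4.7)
The plan is to take $f = f^{+} + f^{-}$ where $f^{\pm} \in \mathbf{H}^{\pm}$ are the cyclic vectors produced by the two preceding lemmas, but specialized to the concrete Parseval Gabor window $\mathbf{v}_{\lambda} = |\lambda|^{1/2}\mathbf{1}_{[0,1)}$. By Lemmas \ref{density} and \ref{volume}, this is indeed a Parseval generator for the Gabor lattice $\mathbb{Z}\times\lambda\mathbb{Z}$, and it yields $\mathcal{P}f^{\pm}(\lambda) = \mathbf{1}_{[0,1)} \otimes \mathbf{u}_{\lambda}$ on the respective half-intervals $(0,1]$ and $[-1,0)$. Since $\mathbf{H}^{+} \perp \mathbf{H}^{-}$, we have $\|f\|^{2} = \|f^{+}\|^{2} + \|f^{-}\|^{2} = 1$, so every $L_{\mathbb{H}}(\gamma)f$ is a unit vector. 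By the earlier remark that a Parseval frame of unit vectors is an orthonormal basis, the task reduces to establishing the Parseval frame identity for $\{L_{\mathbb{H}}(\gamma)f\}_{\gamma\in\Gamma}$ on all of $\mathbf{H}$.

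Writing $\phi = \phi^{+} + \phi^{-}$ and exploiting the invariance and orthogonality of the two subspaces, I split
\[
\sum_{\gamma\in\Gamma}|\langle\phi,L_{\mathbb{H}}(\gamma)f\rangle|^{2} = \|\phi^{+}\|^{2} + \|\phi^{-}\|^{2} + 2\operatorname{Re}\,C(\phi^{+},\phi^{-}),
\]
where $C(\phi^{+},\phi^{-}) := \sum_{\gamma}\langle\phi^{+},L_{\mathbb{H}}(\gamma)f^{+}\rangle\overline{\langle\phi^{-},L_{\mathbb{H}}(\gamma)f^{-}\rangle}$ is the cross term. The two diagonal sums already account for $\|\phi^{+}\|^{2}+\|\phi^{-}\|^{2} = \|\phi\|^{2}$ by the previous two lemmas, so everything hinges on proving $C \equiv 0$.

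To compute $C$, I would pass to the Plancherel picture and decompose $\gamma = \mathbf{P}_{j,0,0}\mathbf{P}_{0,m,k}$. Performing the $j$-summation first yields a Poisson kernel $\sum_{j\in\mathbb{Z}}e^{-2\pi ij(\lambda - \mu)}$, and since $\lambda\in(0,1]$ and $\mu\in[-1,0)$ force $\lambda - \mu \in (0,2]$, the only surviving coincidence is $\mu = \lambda - 1$. The remaining integrand consists of products of the form $\int_{m}^{m+1}\Psi^{+}(\lambda)(t)\,e^{2\pi i\lambda kt}\,dt$ against the complex conjugate of an analogous expression at $\mu = \lambda - 1$; substituting $t = s + m$ and summing over $k$ produces the distribution $\delta(\lambda s + (1-\lambda)s')$ on $(s,s')\in[0,1)^{2}$. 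Since the line $\lambda s + (1-\lambda)s' = 0$ meets $[0,1]^{2}$ only at the corner $(0,0)$, the distribution is concentrated on a set of measure zero inside the integration domain, and the inner sum vanishes pointwise in $\lambda$, giving $C\equiv 0$.

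The subtle point is justifying the double Poisson argument rigorously at the $L^{2}$ level: expanding each fiber $\Psi^{\pm}(\lambda)$ in a Fourier series on $[0,1)$ converts the inner $k$-sum into an absolutely convergent series that collapses term-by-term via the classical Bernoulli identity $\sum_{k\neq 0}\sin^{2}(\pi k\lambda)/k^{2} = \pi^{2}\lambda(1-\lambda)$, which provides the exact cancellation between the $k = 0$ contribution and the $k \neq 0$ tail. Since $C$ is bounded by $\|\phi^{+}\|\|\phi^{-}\|$ (Cauchy--Schwarz against the two Parseval frames), vanishing on a dense subset of fibers extends by continuity to all of $\mathbf{H}^{+} \times \mathbf{H}^{-}$, completing the proof.
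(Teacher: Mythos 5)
The paper itself does not prove this lemma: it only records the warning that a direct sum of two Parseval frames is a Parseval frame for the direct sum only when the ranges of the two coefficient (analysis) operators are orthogonal in $l^{2}\left(\Gamma\right)$, and then defers the construction of $f$ to Currey--Mayeli \cite{Currey}. Your proposal is therefore a genuinely different, self-contained route, and its architecture is sound: you correctly reduce the problem to the vanishing of the cross term $C\left(\phi^{+},\phi^{-}\right)$ (exactly the range-orthogonality the paper's remark flags); the splitting of $\sum_{\gamma}\left\vert\left\langle\phi,L_{\mathbb{H}}\left(\gamma\right)f\right\rangle\right\vert^{2}$ is legitimate because both coefficient sequences lie in $l^{2}\left(\Gamma\right)$; the summation over the central variable $j$ is Parseval on the circle and correctly pairs the fiber at $\lambda\in\left(0,1\right]$ with the fiber at $\mu=\lambda-1$; the $m$-dependent phases cancel since $e^{2\pi ikm}=1$; and the key claim is in fact true: for the window $\mathbf{1}_{[0,1)}$ the cross-Gram sums $\sum_{k}\widehat{s}\left(\lambda k\right)\overline{\widehat{s'}\left(\left(\lambda-1\right)k\right)}$ (restrictions to each $\left[m,m+1\right)$, with $\widehat{s}\left(\xi\right)=\int s\left(t\right)e^{-2\pi i\xi t}dt$) all vanish, so your $f=f^{+}+f^{-}$ really does generate an orthonormal basis.

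The gap is in your justification of that last vanishing. The Bernoulli identity $\sum_{k\neq0}\sin^{2}\left(\pi k\lambda\right)/k^{2}=\pi^{2}\lambda\left(1-\lambda\right)$ disposes of exactly one matrix entry of the problem, namely the pair of constant Fourier modes; for general modes $e^{2\pi int}$, $e^{2\pi in't'}$ the $k$-sum one must kill is
\[
\sum_{k\neq0}\frac{\sin^{2}\left(\pi\lambda k\right)}{\pi^{2}\left(n+\lambda k\right)\left(n'+\left(\lambda-1\right)k\right)}+\left(k=0\ \text{term}\right),
\]
a different family of identities that you neither state nor prove, and in addition the term-by-term interchange of the double Fourier expansion with the $k$-sum is unjustified (these series are only conditionally summable). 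The clean repair, which replaces both the Dirac-comb heuristic and the mode-by-mode computation: take fibers $s,s'$ smooth and compactly supported in the open interval; after translating the base point to $0$, observe that
\[
\widehat{s}\left(\lambda k\right)\overline{\widehat{s'}\left(\left(\lambda-1\right)k\right)}=\widehat{s_{\lambda}}\left(k\right)\widehat{u}\left(k\right)=\widehat{s_{\lambda}\ast u}\left(k\right),\quad s_{\lambda}\left(t\right)=\tfrac{1}{\lambda}s\left(\tfrac{t}{\lambda}\right),\ u\left(t\right)=\tfrac{1}{1-\lambda}\overline{s'}\left(\tfrac{t}{1-\lambda}\right),
\]
where $s_{\lambda}\ast u$ is smooth and compactly supported in $\left(0,1\right)$; genuine Poisson summation then gives $\sum_{k}\widehat{s_{\lambda}\ast u}\left(k\right)=\sum_{n\in\mathbb{Z}}\left(s_{\lambda}\ast u\right)\left(n\right)=0$. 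Since your Cauchy--Schwarz bound $\left\vert C\right\vert\leq\left\Vert\phi^{+}\right\Vert\left\Vert\phi^{-}\right\Vert$ is correct, this vanishing on a dense set of fibers extends to all of $\mathbf{H}^{+}\times\mathbf{H}^{-}$, and the proof closes. With that substitution your argument is complete, and it is more explicit than the paper's, which exhibits no window at all.
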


We remark that in general the direct sum of two Parseval frames in $H,$ and
$K$ is not an even a Parseval frame for the space $H\oplus K$ , unless the
ranges of the coefficients operators are orthogonal to each other. A proof of Lemma \ref{CurZ} is given by Currey and Mayeli in
\cite{Currey}, where they show how to put together $f$ and $h$ in order to
obtain an orthonormal basis for $\mathbf{H}$. Now, we are in a good position
to obtain a decomposition of the left regular representation of the
time-frequency group. First, let us define
\[
K=\left\{  \left[
\begin{array}
[c]{ccc}%
1 & 0 & l\\
0 & 1 & k\\
0 & 0 & 1
\end{array}
\right]  :\left(  k,l\right)  \in%
\mathbb{Z}
^{2}\right\}  .
\]
It is easy to see that $K$ is an abelian subgroup of $\Gamma.$ Let $L$ be the
left regular representation of $\Gamma.$ 
\begin{theorem}
A direct integral decomposition of $L$ is obtained as follows
\begin{equation}
\int_{\left[  -1,1\right]  }^{\oplus}\pi_{\lambda}|\Gamma\text{ }\left\vert
\lambda\right\vert d\lambda\cong\int_{\left[  -1,1\right]  }^{\oplus}%
\int_{\left[  0,\left\vert \lambda\right\vert \right)  }^{\oplus}%
\mathrm{Ind}_{K}^{\Gamma}\left(  \chi_{\left(  \left\vert \lambda\right\vert
,t\right)  }\right)  \left\vert \lambda\right\vert dtd\lambda\label{dec}%
\end{equation}
acting in the Hilbert space
\[
\int_{\left[  -1,1\right]  }^{\oplus}\int_{\left[  0,\left\vert \lambda
\right\vert \right)  }^{\oplus}l^{2}\left(
\mathbb{Z}
\right)  \text{ }\left\vert \lambda\right\vert dtd\lambda
\]

\end{theorem}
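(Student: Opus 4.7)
The plan is to decompose $L$ in two stages: first, recognize $L$ as a direct integral of restrictions $\pi_\lambda|\Gamma$ using the Plancherel theory of the ambient Heisenberg group $\mathbb{H}$; and second, decompose each $\pi_\lambda|\Gamma$ into induced representations via a Zak-type fiberwise diagonalization of the normal abelian subgroup $K$.

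For the first stage, I would combine Lemma \ref{CurZ} with the Plancherel decomposition of $L_\mathbb{H}$. Lemma \ref{CurZ} produces an orthonormal basis of $\mathbf{H}=\mathbf{H}^{+}\oplus\mathbf{H}^{-}$ of the form $\{L_\mathbb{H}(\gamma)f\}_{\gamma\in\Gamma}$, which gives a unitary equivalence between $(L_\mathbb{H}|\Gamma,\mathbf{H})$ and the left regular representation $L$ of $\Gamma$. Under the Plancherel transform $\mathcal{P}$, the restrictions of $L_\mathbb{H}$ to $\mathbf{H}^{+}$ and $\mathbf{H}^{-}$ are unitarily equivalent to $\int_{(0,1]}^{\oplus}\pi_\lambda\,|\lambda|d\lambda$ and $\int_{[-1,0)}^{\oplus}\pi_\lambda\,|\lambda|d\lambda$, respectively. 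Restricting to $\Gamma$ and taking the direct sum yields the preliminary identification
\[
L\cong\int_{[-1,1]}^{\oplus}\pi_\lambda|\Gamma\,|\lambda|d\lambda.
\]

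For the second stage, I would introduce a Zak-type unitary $Z:L^2(\mathbb{R})\to L^2([0,1))\otimes l^2(\mathbb{Z})$ defined by $Zf(s,n)=f(s+n)$ and compute $Z\pi_\lambda(\gamma)Z^{-1}$ on the generators of $\Gamma$. A direct calculation shows that $\mathbf{P}_{0,m,0}$ conjugates to the shift $n\mapsto n-m$, $\mathbf{P}_{l,0,0}$ conjugates to scalar multiplication by $e^{2\pi i\lambda l}$, and $\mathbf{P}_{0,0,k}$ conjugates to the fiberwise diagonal operator with entries $e^{-2\pi i\lambda k(s+n)}$. Comparing with the explicit action of the induced representation $\mathrm{Ind}_K^\Gamma(\chi_{(s_1,s_2)})$ on $l^2(\Gamma/K)\cong l^2(\mathbb{Z})$ obtained using the coset representatives $\{\mathbf{P}_{0,n,0}\}_{n\in\mathbb{Z}}$, the fiber of the Zak decomposition at parameter $s$ is identified with $\mathrm{Ind}_K^\Gamma(\chi_{(\lambda,-\lambda s)})$.

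Finally, I would reparametrize. Since the $\Gamma/K$-orbit of $(\lambda,-\lambda s)\in\widehat{K}$ equals $\{(\lambda,-\lambda s+m\lambda):m\in\mathbb{Z}\}$, a fundamental domain in the second coordinate is $[0,|\lambda|)$, and the change of variable from $s\in[0,1)$ to the orbit representative $t\in[0,|\lambda|)$ has Jacobian $1/|\lambda|$. This transforms the fiber integral into $\int_{[0,|\lambda|)}^{\oplus}\mathrm{Ind}_K^\Gamma(\chi_{(|\lambda|,t)})\,dt/|\lambda|$, and substituting into the preliminary identification absorbs the factor $1/|\lambda|$ into the outer measure, producing the asserted decomposition with inner measure $|\lambda|dt$. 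The principal technical obstacle will be to verify that the fiber characters are parametrized by $(|\lambda|,t)$ rather than by $(\lambda,t)$ when $\lambda<0$; this requires either a careful choice of the measurable field $\{\mathbf{u}_\lambda\}$ defining $\mathbf{H}^{-}$ so that the Zak decomposition automatically produces the positive-frequency parametrization, or an application of the antiunitary equivalence $\pi_\lambda\sim\overline{\pi_{-\lambda}}$ to align sign conventions.
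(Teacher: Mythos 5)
Your first stage coincides with the paper's own proof: Lemma \ref{CurZ} supplies a vector $f\in\mathbf{H}$ for which the coefficient operator $R_{f}g(\gamma)=\left\langle g,L_{\mathbb{H}}(\gamma)f\right\rangle$ is unitary, and this gives $L\cong\int_{[-1,1]}^{\oplus}\pi_{\lambda}|\Gamma\,\left\vert\lambda\right\vert d\lambda$ exactly as in (\ref{first}). Your second stage is a genuinely different route: where the paper simply cites Baggett for the fiberwise decomposition (\ref{second}), you compute it by hand with a periodization unitary, and that computation is correct as far as it goes --- the fiber at $s\in[0,1)$ is $\mathrm{Ind}_{K}^{\Gamma}(\chi_{(\lambda,-\lambda s)})$, with both character parameters mattering only modulo $1$, and the measure bookkeeping ($dt/\left\vert\lambda\right\vert$ versus $\left\vert\lambda\right\vert dt$) is harmless since equivalent measures give unitarily equivalent direct integrals.

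The gap is precisely the obstacle you flag at the end, and it is fatal rather than technical: neither of your proposed fixes can close it. For $\lambda\in[-1,0)$ with $2\lambda\notin\mathbb{Z}$, your computation yields the characters $\chi_{(\lambda\bmod 1,\,t)}=\chi_{(1-\left\vert\lambda\right\vert,\,t)}$, and no unitary equivalence can replace these by $\chi_{(\left\vert\lambda\right\vert,t)}$: the central element $\mathbf{P}_{1,0,0}$ acts in $\pi_{\lambda}|\Gamma$ by the scalar $e^{2\pi i\lambda}$, while it acts in every $\mathrm{Ind}_{K}^{\Gamma}(\chi_{(\left\vert\lambda\right\vert,t)})$ by the scalar $e^{2\pi i\left\vert\lambda\right\vert}$; scalars are preserved under unitary conjugation, so $\pi_{\lambda}|\Gamma$ is disjoint from every representation in the corresponding fiber of (\ref{dec}). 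Your fix via the field $(\mathbf{u}_{\lambda})$ is vacuous, because the equivalence class of $(L_{\mathbb{H}}|\Gamma,\mathbf{H}^{-})$ is $\int_{[-1,0)}^{\oplus}\pi_{\lambda}|\Gamma\,\left\vert\lambda\right\vert d\lambda$ no matter which unit vectors are chosen --- the field selects the multiplicity-free subspace, not the equivalence class. Your fix via the antiunitary equivalence $\pi_{\lambda}\sim\overline{\pi_{-\lambda}}$ fails because conjugation inverts the central scalar, which is exactly the obstruction, so it cannot be upgraded to a unitary equivalence. You should know that this defect is inherited from the paper itself: (\ref{second}) applies a positive-parameter decomposition verbatim to negative $\lambda$, and the literal statement (\ref{dec}) cannot be rescued by rearrangement --- both sides have central decompositions with fibers quasi-equivalent to $\pi_{\theta}|\Gamma$, but the fiber of $L$ over the central character $e^{2\pi i\theta}$ has Murray--von Neumann coupling constant $1$ while the fiber of the right-hand side of (\ref{dec}) has coupling $2\theta$, so they are inequivalent for a.e.\ $\theta$ (the same central-scalar obstruction invalidates the claim $\pi_{\ell_{1}}|\Gamma\cong\pi_{\ell_{2}}|\Gamma$ for $\left\vert\ell_{1}\right\vert=\left\vert\ell_{2}\right\vert$ used in Theorem \ref{central}). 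What your method actually proves, and the corrected form of the theorem, is
\[
L\cong\int_{[-1,1]}^{\oplus}\int_{[0,\left\vert\lambda\right\vert)}^{\oplus}\mathrm{Ind}_{K}^{\Gamma}\left(\chi_{(\lambda\bmod 1,\,t)}\right)\left\vert\lambda\right\vert\,dt\,d\lambda\;\cong\;\int_{[0,1)}^{\oplus}\int_{[0,1)}^{\oplus}\mathrm{Ind}_{K}^{\Gamma}\left(\chi_{(\theta,t)}\right)dt\,d\theta,
\]
the second form being visible in one line from $L\cong\mathrm{Ind}_{K}^{\Gamma}(L_{K})$ together with $L_{K}\cong\int_{\widehat{K}}^{\oplus}\chi\,d\chi$, with almost every fiber irreducible since almost every $\theta$ is irrational and the corresponding orbit in $\widehat{K}$ is free.
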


\begin{proof}
First, let us define $R_{f}:\mathbf{H\rightarrow}$ $l^{2}\left(
\Gamma\right)  $ such that $R_{f}g\left(  \gamma\right)  =\left\langle
g,L\left(  \gamma\right)  f\right\rangle .$ Using Lemma \ref{CurZ} (see
\cite{Currey} also), we construct a vector $f\in\mathbf{H}$ such that $R_{f}$
is an unitary. As a result, the operator $R_{f}\circ\mathcal{P}^{-1}$ is
unitary and
\begin{equation}
\left(  R_{f}\circ\mathcal{P}^{-1}\right)  \mathcal{\circ}\left(
\int_{\left[  -1,1\right]  }^{\oplus}\pi_{\lambda}|\Gamma\text{ }\left\vert
\lambda\right\vert d\lambda\right)  \left(  \cdot\right)  \circ\left(
\mathcal{P\circ}R_{f}^{-1}\right)  =L\left(  \cdot\right)  \label{first}%
\end{equation}
where
\[
\pi_{\lambda}\left(  \left[
\begin{array}
[c]{ccc}%
1 & m & l\\
0 & 1 & k\\
0 & 0 & 1
\end{array}
\right]  \right)  f\left(  t\right)  =e^{2\pi il\lambda}e^{2\pi ik\lambda
t}f\left(  t-m\right)  .
\]
Thus,
\[
\int_{\left[  -1,1\right]  }^{\oplus}\pi_{\lambda}|\Gamma\text{ }\left\vert
\lambda\right\vert d\lambda\cong L.
\]
Notice that $\left(  \pi_{\lambda}|\Gamma,L^{2}\left(  \mathbb{R}\right)
\right)  $ is not an irreducible representation. However, we may use Baggett's
decomposition given in \cite{Baggett}. For each $\lambda\in\left[
-1,1\right]  ,$ the representation $\pi_{\lambda}$ is decomposed into its
irreducible components as follows:
\begin{equation}
\pi_{\lambda}|\Gamma\cong\int_{\left[  0,\left\vert \lambda\right\vert \right)
}^{\oplus}\mathrm{Ind}_{K}^{\Gamma}\left(  \chi_{\left(  \left\vert
\lambda\right\vert ,t\right)  }\right)  dt\label{second}%
\end{equation}
where
\[
\chi_{\left(  \left\vert \lambda\right\vert ,t\right)  }\left(  \left[
\begin{array}
[c]{ccc}%
1 & 0 & l\\
0 & 1 & k\\
0 & 0 & 1
\end{array}
\right]  \right)  =\exp\left(  2\pi i\left(  \left\vert \lambda\right\vert
l+tk\right)  \right)  .
\]
Combining (\ref{first}) with (\ref{second}), we obtain the decomposition given
in (\ref{dec}).
\end{proof}
It is worth noticing that in the case where $\alpha\in%
\mathbb{R}
-%
\mathbb{Q}
,$ that the group $G$ is a non-type I group. Moreover, it is well-known  that the left regular
representation of $G$ is a non-type I representation. In fact, (see\cite{Kan}) the left regular
representation of this group is type II. In this case, in order to obtain a
useful decomposition of the left regular representation, it is better to
consider a new type of dual. Let us recall the following well-known facts (see Section $3.4.2$ \cite{hartmut}). Let
$G$ be a locally compact group. Let $\breve{G}$ be the collection of all
quasi-equivalence classes of primary representations of $G,$ and let $\pi$ be
a unitary representation of $G$ acting in a Hilbert space $\mathcal{H}_{\pi}.$
Essentially, there exists a unique way to decompose the representation $\pi$
into primary representations such that the center of the commuting algebra of
the representation is decomposed as well. This decomposition is known as the
\textbf{central decomposition}. More precisely, there exist

\begin{enumerate}
\item A standard measure $\nu_{\pi}$ on the quasi-dual of $G:\breve{G}$

\item A $\nu_{\pi}$-measurable field of representations $\left(  \rho
_{t}\right)  _{t\in\breve{G}}$

\item A unitary operator
\[
\mathfrak{R}:\mathcal{H}_{\pi}\rightarrow\int_{\breve{G}}^{\oplus}%
\mathcal{H}_{\pi,t}\text{ }d\nu_{\pi}\left(  t\right)
\]
such that
\[
\mathfrak{R}\pi\left(  \cdot\right)  \mathfrak{R}^{-1}=\int_{\breve{G}%
}^{\oplus}\rho_{t}\left(  \cdot\right)  \text{ }d\nu_{\pi}\left(  t\right)  .
\]
Moreover, letting $Z\left(  \mathcal{C}\left(  \pi\right)  \right)  $ be the
center of the commuting algebra of the representation $\pi,$ then
\[
\mathfrak{R}\left(  Z\left(  \mathcal{C}\left(  \pi\right)  \right)  \right)
\mathfrak{R}^{-1}=\int_{\breve{G}}^{\oplus}%
\mathbb{C}
\cdot\mathbf{1}_{\mathcal{H}_{\pi,t}}\text{ }d\nu_{\pi}\left(  t\right)
.
\]

\end{enumerate}

The importance of the central decomposition is illustrated through the
following facts. Let $\pi,\theta$ be representations of a locally compact group and let $\nu_{\pi}$ and
$\nu_{\theta}$ be the measures underlying the respective central
decompositions. Then $\pi$ is quasi-equivalent to a subrepresentation of
$\theta$ if and only if $\nu_{\pi}$ is absolutely continuous with respect to
$\nu_{\theta}.$ In particular, $\pi$ is quasi-equivalent to $\theta$ if and
only if the measures $\nu_{\pi}$ and $\nu_{\theta}$ are equivalent.
Furthermore, the representations $\nu_{\pi}$ and $\nu_{\theta}$ are disjoint
if and only if the measures $\nu_{\pi}$ and $\nu_{\theta}$ are disjoint
measures. Since the central decomposition provides information concerning the commuting algebra of the left regular representation $L$, and because such information is crucial in the classification of admissible representations of $\Gamma$ then  it is important to mention the following. 

\begin{theorem}\label{central}
$L\cong\int_{\left(  0,1\right]  }^{\oplus}\pi_{\lambda}|\Gamma\otimes
\mathbf{1}_{%
\mathbb{C}
^{2}}$ $\left\vert \lambda\right\vert d\lambda$ and this decomposition is the
central decomposition of the left regular representation of $G.$
\end{theorem}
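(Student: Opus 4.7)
The plan is to derive the central decomposition from the direct integral already obtained in the preceding theorem, namely $L\cong\int_{[-1,1]}^{\oplus}\pi_{\lambda}|\Gamma\,|\lambda|\,d\lambda$. My strategy is to (i) reorganize this integral over the quasi-dual parameter $\lambda\in(0,1]$ with multiplicity two, and (ii) verify the two defining properties of a central decomposition for the reorganized expression.

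For step (i), I would first split the integration as $[-1,0)\sqcup(0,1]$ and perform the change of variable $\lambda\mapsto-\lambda$ on the negative half, which leaves the weight $|\lambda|\,d\lambda$ invariant. To merge the two halves into a single integrand of the form $\pi_\lambda|\Gamma\otimes\mathbf{1}_{\mathbb{C}^2}$, I would exploit the automorphism $\alpha:\mathbf{P}_{l,m,k}\mapsto\mathbf{P}_{-l,m,-k}$ of $\mathbb{H}$, which preserves $\Gamma$ and satisfies $\pi_\lambda\circ\alpha\cong\pi_{-\lambda}$. This identifies the fiber at $-\lambda$ with the fiber at $\lambda$ at the level of quasi-equivalence classes, producing a second multiplicity slot at each $\lambda\in(0,1]$.

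For step (ii), two properties must be verified. First, distinct fibers must lie in distinct quasi-equivalence classes: the central character of $\pi_\lambda$ on $Z(\Gamma)=\{\mathbf{P}_{l,0,0}:l\in\mathbb{Z}\}$ is $\mathbf{P}_{l,0,0}\mapsto e^{2\pi il\lambda}$, and for distinct $\lambda,\lambda'\in(0,1]$ these are distinct characters of $\mathbb{Z}$, so the corresponding primary representations must be disjoint. Second, almost every fiber must itself be primary. This is the main obstacle: for almost every $\lambda\in(0,1]$ (concretely for $\lambda$ irrational), I must show that the von Neumann algebra $\pi_\lambda(\Gamma)''\subset B(L^2(\mathbb{R}))$, generated by the modulation–translation unitaries $M_{-\lambda k}T_m$ together with the central scalars, is a factor. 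This is the classical irrational-rotation / noncommutative-torus factor construction, and since the irrationals are conull in $(0,1]$ primariness will hold almost everywhere.

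Once these two properties are in hand, the reorganized direct integral satisfies the definition of a central decomposition, and the identification with $L$ follows from the uniqueness of the central decomposition (in the sense recalled in the paragraph preceding the statement). I expect the primariness step to require the most care, as it depends on a genuinely nontrivial fact about the von Neumann algebra generated by a Heisenberg lattice representation at irrational central parameter, whereas the remaining ingredients are measure-theoretic reorganizations of what has already been proved.
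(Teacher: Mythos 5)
Your step (i) is where the argument breaks, and the facts you cite, while true, do not support the inference you draw from them. It is correct that $\alpha(\mathbf{P}_{l,m,k})=\mathbf{P}_{-l,m,-k}$ is an automorphism of $\mathbb{H}$ preserving $\Gamma$, and indeed one even has the exact equality $\pi_{\lambda}\circ\alpha=\pi_{-\lambda}$. But precomposing a representation with an automorphism does \emph{not} preserve its unitary (or quasi-) equivalence class unless the automorphism is implemented inside the representation, so this does not identify the class of $\pi_{-\lambda}|\Gamma$ with that of $\pi_{\lambda}|\Gamma$. Your own step (ii) shows why it cannot: the central element $\mathbf{P}_{1,0,0}$ acts in $\pi_{\lambda}$ by the scalar $e^{2\pi i\lambda}$ and in $\pi_{-\lambda}$ by $e^{-2\pi i\lambda}$, and any intertwining unitary --- indeed any quasi-equivalence, since a unital $*$-isomorphism of the generated von Neumann algebras fixes scalar operators --- would force $e^{4\pi i\lambda}=1$. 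Hence for every $\lambda$ with $2\lambda\notin\mathbb{Z}$ the representations $\pi_{\lambda}|\Gamma$ and $\pi_{-\lambda}|\Gamma$ are \emph{disjoint}, exactly by the central-character criterion you invoke to separate distinct fibers; the negative-half fiber at $-\lambda$ is instead a candidate to merge with the positive fiber at $1-\lambda$, since $e^{-2\pi i\lambda}=e^{2\pi i(1-\lambda)}$. So the reorganized integral produced by your step (i) is not $\int_{(0,1]}^{\oplus}\pi_{\lambda}|\Gamma\otimes\mathbf{1}_{\mathbb{C}^{2}}\,|\lambda|\,d\lambda$, and steps (ii) and the uniqueness argument never get to act on the object the theorem is about.

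For comparison, the paper merges the two halves by a different device: it quotes Baggett's decomposition (\ref{second}), $\pi_{\lambda}|\Gamma\cong\int_{[0,|\lambda|)}^{\oplus}\mathrm{Ind}_{K}^{\Gamma}(\chi_{(|\lambda|,t)})\,dt$, whose right-hand side depends only on $|\lambda|$, and concludes $\pi_{\ell_{1}}|\Gamma\cong\pi_{\ell_{2}}|\Gamma$ whenever $|\ell_{1}|=|\ell_{2}|$; primariness at irrational $\lambda$ is then cited from Folland, just as in your step (ii). So your disjointness and factor steps agree with the paper, and the divergence is confined to the merging step. You should be aware, however, that your central-character computation applies verbatim to the paper's device as well: $\mathrm{Ind}_{K}^{\Gamma}(\chi_{(|\lambda|,t)})$ sends $\mathbf{P}_{1,0,0}$ to $e^{2\pi i|\lambda|}\mathbf{1}$, while $\pi_{\lambda}(\mathbf{P}_{1,0,0})=e^{2\pi i\lambda}\mathbf{1}$, so (\ref{second}) cannot hold as stated for $\lambda<0$ unless $2\lambda\in\mathbb{Z}$. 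A genuine repair must pair $\lambda$ with $\lambda-1$ (equal central characters) and then compare $\pi_{\lambda}|\Gamma\oplus\pi_{\lambda-1}|\Gamma$ with $\pi_{\lambda}|\Gamma\otimes\mathbf{1}_{\mathbb{C}^{2}}$; by Rieffel's computation of the Murray--von Neumann dimension of Gabor-type factor representations (the dimension equals the covolume of the lattice), the former has coupling $\lambda+(1-\lambda)=1$ while the latter has coupling $2\lambda$, and these disagree for almost every $\lambda$. So the obstruction you ran into is not an artifact of your automorphism trick: it marks a real problem with the fiberwise merging that neither your argument nor the paper's appeal to Baggett resolves, and the fibers of the true central decomposition are the coupling-one representations $\pi_{\lambda}|\Gamma\oplus\pi_{\lambda-1}|\Gamma$ rather than $\pi_{\lambda}|\Gamma\otimes\mathbf{1}_{\mathbb{C}^{2}}$.
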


\begin{proof}
We have already seen that
\[
L\cong\int_{\left[  -1,1\right]  }^{\oplus}\pi_{\lambda}|\Gamma\left\vert
\lambda\right\vert d\lambda
\]
and
\[
\pi_{\lambda}|\Gamma\cong\int_{\left[  0,\left\vert \lambda\right\vert
\right)  }^{\oplus}\mathrm{Ind}_{K}^{\Gamma}\left(  \chi_{\left(  \left\vert
\lambda\right\vert ,t\right)  }\right)  \text{ }dt.
\]
Therefore given $\ell_{1},\ell_{2}\in\left[  -1,1\right]  -\left\{  0\right\}
,$ if $\left\vert \ell_{1}\right\vert =\left\vert \ell_{2}\right\vert $ then
$\pi_{\ell_{1}}|\Gamma\cong\pi_{\ell_{2}}|\Gamma.$ As a result, the left
regular representation of $G$ is equivalent to
\begin{equation}
\int_{\left(  0,1\right]  }^{\oplus}\pi_{\lambda}|\Gamma\otimes\mathbf{1}_{%
\mathbb{C}
^{2}}\left\vert \lambda\right\vert d\lambda\label{factor}%
\end{equation}
Also, it is well-known that the representation $\pi_{\lambda}|\Gamma$ is a primary or
a factor representation of $G$ whenever $\lambda$ is irrational (see Page $127$ of \cite{Folland}). So, the
decomposition given in (\ref{factor}) is indeed a central decomposition of $L$. This
completes the proof. 
\end{proof}

\end{document}